\titleformat*{\paragraph}{\itshape}
\titleformat*{\subsubsection}{\itshape}
\renewcommand{\P}{\mathbb{P}}
\newcommand{\Z}{\mathbb{Z}}
\newcommand{\E}{\mathbb{E}}
\newcommand{\F}{\mathcal{F}}
\DeclarePairedDelimiter{\abs}{\lvert}{\rvert}
\newtheorem{theorem}{Theorem}[section]
\newtheorem{corollary}[theorem]{Corollary}
\newtheorem{lemma}[theorem]{Lemma}
\newtheorem{proposition}[theorem]{Proposition}
\newtheorem{conjecture}[theorem]{Conjecture}
\newtheorem{question}{Question}[section]
\theoremstyle{definition}
\newtheorem{example}{Example}[section]
\theoremstyle{remark}
\newtheorem{remark}[theorem]{Remark}
\theoremstyle{definition}
\newtheorem{definition}{Definition}[section]
\title{A study of distributional complexity measures for Boolean functions}%Distributional complexity measures: separations and (polynomial?) relations}
\author{Laurin Köhler-Schindler\footnote{ETH Zürich, Zürich, Switzerland. Email: laurin.koehler.schindler@gmail.com} \and Jeffrey E.\ Steif\footnote{Chalmers University of Technology and Gothenburg University, Gothenburg, Sweden. Email: steif@chalmers.se}}
\date{\today}
\begin{document}

\maketitle

\begin{abstract}
	A number of complexity measures for Boolean functions have previously been introduced. These include
	(1) sensitivity,  (2) block sensitivity, (3) witness complexity, (4) subcube partition complexity and
	(5) algorithmic complexity. Each of these is concerned with ``worst-case'' inputs. It has been shown that there is ``asymptotic separation''  between these complexity measures
	and very recently, due to the work of Huang, it has been established that they are all
	``polynomially related''. In this paper, we study the notion of distributional complexity where the input bits are independent
	and one considers all of the above notions in expectation. We obtain a number of results concerning
	distributional complexity measures, among others addressing the above concepts of ``asymptotic separation'' and being
	``polynomially related'' in this context. We introduce a new distributional complexity measure, local witness complexity, which
	only makes sense in the distributional context and
	we also study a new version of algorithmic complexity which involves
	partial information. Many interesting examples are presented including some related
	to percolation. The latter connects a number of the recent developments in percolation theory over the last two decades
	with the study of complexity measures in theoretical computer science.
\end{abstract}

\tableofcontents

\section{Introduction}\label{sec:introduction}

A Boolean function is an arbitrary function
$$
f:\{0,1\}^n \to \{0,1\}.
$$
Boolean functions arise in a number of areas of mathematics including theoretical computer science and its role in the latter
will be the main topic of this paper.  A number of concepts have been previously introduced which are designed to measure
the ``complexity'' of a Boolean function; see Subsection \ref{subsec:defs-hierarchy-det}
for five of the standard complexity measures which have been intensively studied (see, e.g., \cite{Buhrman2002,Aaronson2021}).  These are
(1) sensitivity,  (2) block sensitivity, (3) witness complexity, (4) subcube partition complexity and (5) algorithmic complexity.
In all of these cases, the definition is given in terms of ``worst case'' over the possible input strings from
$\{0,1\}^n$. 
There is an ordering of these complexity notions which holds for all Boolean functions; see
Proposition \ref{prop:hierarchy-det}. We will often use the term \emph{deterministic complexity measures} in order
to distinguish these from the  \emph{distributional complexity measures} introduced further down.
Two particular directions which have been successfully investigated within this field are
(1) asymptotic separation and (2) polynomial relations. Asymptotic separation means that for any two of the complexity measures (1)--(5),
there exists a sequence of Boolean functions $(f_n)_{n\ge 1}$ so that the ratio of these two
complexity measures applied to $f_n$ approaches $\infty$ (or zero); see Theorem \ref{thm:asymp-sep-det}.
The term ``polynomial relations''  means that all of these complexity measures are related by universal
polynomial factors; more precisely, there is a constant $C$ so that each complexity measure is at most any other
complexity measure raised to the power $C$; see Theorem \ref{thm:poly-rel-det}.
Recently in 2019, Huang obtained the final step of this result by proving the so-called sensitivity conjecture \cite{Huang2019}.

Rather than considering ``worst-case'' input strings, which is done in the above notions, one can measure complexity in an average sense.
One of the main books on Boolean functions when one considers ``average case'' notions is  \cite{ODonnell2014}.
Here one assumes that the input string $\{0,1\}^n$ is chosen randomly; usually one assumes that
the bits are chosen independently, 1 or 0, with probabilities $p$ and $1-p$ for a given $p$. Then one considers the various
notions mentioned earlier ((1) sensitivity,  (2) block sensitivity, (3) witness complexity, (4) subcube partition complexity and
(5) algorithmic complexity)
on expectation rather than on a ``worst-case'' input string. We use the term distributional complexity in this context;
see Subsection \ref{subsec:defs-hierarchy-dist} for the precise definitions.  One of the various goals of this paper is to study
the notions of  ``asymptotic separation'' and ``polynomial relations'' in the context
of distributional complexity; see Theorem \ref{thm:asymp-sep-dist} for the former and
Subsection \ref{subsec:polynomial-relations-dist} for the latter. 
Some other related references are \cite{Blanc2024} and \cite{Sanyal2024}.

In Subsection \ref{subsec:defs-hierarchy-dist}, we also introduce a new complexity measure, called local witness
complexity, which does not have an analogue in the deterministic (worst-case) setting.
This notion of local witness complexity is very natural from a probabilistic perspective, seemingly more natural than the closely aligned notion of subcube partition.
In addition, it often captures the minimal structure needed for certain arguments to work; see Theorem \ref{thm:odonnell-servedio} for an example of this.
An analogous notion of local sets also arises in the study of the so-called Gaussian Free Field.

As in the deterministic case, there is an ordering of these distributional complexity measures, including local witness complexity,
which holds for all Boolean functions; see Proposition \ref{prop:hierarchy-dist}. We also give examples
demonstrating that this new notion of local witness complexity is distinct from all of the other complexity measures.

We proceed to give a brief summary of different parts of the paper. First, many interesting examples of Boolean
functions together with either their deterministic complexity measures or
their distributional complexity measures (or both) are given throughout the text.  A particularly interesting class of
Boolean functions is the class of \emph{percolation functions}; see Subsection  \ref{subsec:percolation-examples} where
these are studied. Within these, the sequence corresponding to crossings of a square is especially interesting
(see Example \ref{ex:square-crossing})
since many deep results in percolation theory lead to fairly detailed information
concerning the various distributional complexity measures.
Second, there is a natural notion of composing Boolean functions, and this naturally gives rise, by iteration, to various sequences of Boolean functions.
How these different complexity measures, both in the deterministic and in the distributional sense, behave under
composition is recalled (for deterministic) and studied (for distributional) in Section \ref{sec:composition}.
In Subsection \ref{subsec:subcube},  we examine
the distinction between algorithmic and subcube partition complexity using a geometric perspective. In particular, we will
see that for subcube partition complexity, one can consider the two parts of the cube where $f$ is 1, respectively 0, separately, but
this cannot be done for algorithms.
In Subsection \ref{subsec:stopping-set-perspective}, we introduce a unifying perspective on both
subcube partitions and local sets. This is done via the concept of a \emph{stopping set} which is analogous to
the concept of a stopping time in probability theory.
In Section \ref{sec:alg-comp-partial-info}, we introduce a new notion
of distributional algorithmic complexity with partial information which appears interesting and has certain similarities with the notion of fractional query algorithms introduced in \cite{Gross2022}. In addition, it is potentially relevant
for studying asymptotic separation of two of the notions of distributional complexity, namely algorithmic and subcube partition; see end of Section \ref{sec:asymp-separation}.

Results from theoretical computer science  have in recent years led to new results (or simplifications of older results)
in percolation theory and statistical mechanics, such as so-called sharp thresholds for various models. We therefore hope that this paper will motivate
the percolation community to look more systematically at distributional complexity measures and at the same time
to motivate the theoretical computer science community to study distributional complexity measures (not only worst-case)
even more systematically given that these are very relevant from a probabilistic perspective.
In addition, a systematic study of distributional complexities for non-product measures would also be an interesting direction to pursue in future research. For example, an important inequality, called the OSSS inequality, has recently been extended in this direction, leading to the resolution of important conjectures in statistical mechanics \cite{DuminilCopin2019}.

\section{Definition and hierarchy of complexity measures}\label{sec:defs-hierarchy}

\subsection{Deterministic complexity measures}\label{subsec:defs-hierarchy-det}

In this subsection, we review deterministic complexity measures for Boolean functions. We refer to the coordinates of the $n$-dimensional hypercube $\{0,1\}^n$ as bits. An \emph{algorithm} $A$ is an order of querying the bits (i.e.\ asking for the values) one by one, where the next query is allowed to depend on the outcome of all previous queries. Note that an algorithm will have queried all bits eventually. For a Boolean function $f: \{0,1\}^n \to \{0,1\}$ and a realization of the bits $x \in \{0,1\}^n$, we write $c_f(A,x)$ for the number of queries made by $A$ on $x$ until $f$ is determined. The \emph{deterministic algorithmic complexity} of $f$ is defined as
\begin{equation} \label{eq:def-det-algo-comp}
	a_D(f) = \min_A \ \max_x\  c_f(A,x).
\end{equation}

A \emph{subcube partition} $\mathcal C$ is a partition of the hypercube $\{0,1\}^n$ into subcubes.   It will be convenient to identify a subcube $C \subseteq \{0,1\}^n$ with a sequence $(c_1,\ldots,c_n) \in \{0,1,\star\}^n$, where $c_i \in \{0,1\}$ means that bit $i$ is fixed to be $0$ respectively $1$, and $c_i = \star$ means that bit $i$ is not fixed. We write $I_C$ for the set of bits fixed by $C$. Given a subcube partition $\mathcal C$, we denote the subcube containing $x$ by $C(x) \in \mathcal C$ and its co-dimension by 
\begin{equation}
	c(\mathcal C,x) = n - \log_2(\abs{C(x)}).
\end{equation}
In other words, $c(\mathcal C,x)$ equals $\abs{I_{C(x)}}$, the number of bits fixed by $C(x)$, and $\log_2(\abs{C(x)})$ is the number of bits not fixed by $C(x)$. We say that a subcube partition $\mathcal C$ determines $f$, denoted by $\mathcal C \sim f$, if it is constant on each subcube $C \in \mathcal C$. The \emph{deterministic subcube partition complexity} of $f$ is defined as
\begin{equation} \label{eq:def-det-sc-comp}
	sc_D(f) = \min_{\mathcal C \sim f}\ \max_x\  c(\mathcal C,x).
\end{equation}

A set $W \subseteq [n]$ is called a \emph{witness set} for $f$ and $x$ if $x\vert_W$ determines $f$. Given $x$, we refer to the minimum of $\abs{W}$ over all witness sets $W$ for $f$ and $x$ as the \emph{minimum witness size} and denote it by $w_f(x)$. The \emph{deterministic witness complexity} of $f$ is defined as
\begin{equation}
	w_D(f) = \max_x\  w_f(x).
\end{equation}

We denote by $b_f(x)$ the maximum number $m$ of disjoint blocks $B_1,\ldots,B_m \subseteq [n]$ such that
\begin{equation} \label{eq:block-pivotality}
	f(x^{B_i}) \neq f(x), \quad \forall\, 1 \le i \le m,
\end{equation}
where $x^{B_i}$ denotes the realization of the bits obtained from $x$ by flipping all bits in $B_i$. The \emph{deterministic block sensitivity} of $f$ is defined as
\begin{equation}
	b_D(f) = \max_x\  b_f(x).
\end{equation}

Finally, we denote by $s_f(x)$ the number of bits $i$ such that $f(x^i) \neq f(x)$, where $x^i := x^{\{i\}}$. These bits are called \emph{pivotal} for $f$ at $x$. The \emph{deterministic sensitivity} of $f$ is defined as
\begin{equation}
	s_D(f) = \max_x\  s_f(x).
\end{equation}

Although well-known, we give the proof of the following proposition for the sake of the reader.
\begin{proposition}\label{prop:hierarchy-det} For every Boolean function $f$, the deterministic complexities satisfy
	\begin{equation}
		a_D(f) \ge sc_D(f) \ge w_D(f) \ge b_D(f) \ge s_D(f).
	\end{equation}
\end{proposition}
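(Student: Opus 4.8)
The plan is to prove the chain of inequalities by establishing each of the four inequalities separately, working from right to left since the rightmost inequalities are the most immediate and the leftmost require a genuine algorithmic argument.

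First I would handle $b_D(f) \ge s_D(f)$: for any $x$, the pivotal bits for $f$ at $x$ are singletons $\{i\}$, and singletons are automatically disjoint blocks on which flipping changes the value of $f$. Hence $s_f(x) \le b_f(x)$ for every $x$, and taking the max over $x$ gives the inequality. Next, $w_D(f) \ge b_D(f)$: fix $x$ and let $B_1,\dots,B_m$ be disjoint blocks realizing $b_f(x)$, and let $W$ be any witness set for $f$ and $x$ with $|W| = w_f(x)$. I claim each $B_i$ must intersect $W$; otherwise $x^{B_i}$ agrees with $x$ on $W$, so $f(x^{B_i}) = f(x)$ by the witness property, contradicting $f(x^{B_i}) \ne f(x)$. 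Since the $B_i$ are disjoint and each meets $W$, we get $m \le |W| = w_f(x)$, so $b_f(x) \le w_f(x)$; taking the max gives the claim.

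For $sc_D(f) \ge w_D(f)$: let $\mathcal C$ be an optimal subcube partition determining $f$. For any $x$, the subcube $C(x)$ has the property that $f$ is constant on it, so the set $I_{C(x)}$ of fixed bits is a witness set for $f$ and $x$ (since $x\vert_{I_{C(x)}}$ pins down $C(x)$, hence the value of $f$). Therefore $w_f(x) \le |I_{C(x)}| = c(\mathcal C, x)$, and taking the max over $x$ yields $w_D(f) \le \max_x c(\mathcal C, x) = sc_D(f)$. Finally, for $a_D(f) \ge sc_D(f)$: take an optimal algorithm $A$; for each input $x$, running $A$ on $x$ until $f$ is determined queries a set of bits whose values, together, pin down $f$; the collection of these "partial transcripts'' naturally defines a subcube partition $\mathcal C_A$ of $\{0,1\}^n$ (two inputs lie in the same subcube iff the algorithm takes the same path and stops, fixing exactly the queried bits), and this partition determines $f$. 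Then $c(\mathcal C_A, x) = c_f(A,x)$, so $sc_D(f) \le \max_x c(\mathcal C_A, x) = \max_x c_f(A,x) = a_D(f)$.

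**The main obstacle** is the last inequality $a_D(f) \ge sc_D(f)$: one must check carefully that the transcripts of an algorithm genuinely partition the cube into subcubes — i.e.\ that the sets of inputs following a common halting path are disjoint, cover $\{0,1\}^n$, and each is a subcube (fixing precisely the bits queried along that path and leaving the rest free). The disjointness and covering follow from the determinism of $A$ (every input follows exactly one path), and the subcube structure follows because whether the algorithm halts after a given partial transcript depends only on the values of the queried bits; I would state this as the crux and verify it explicitly. The other three inequalities are routine once the witness/block/pivotal definitions are unwound.
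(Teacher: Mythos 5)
Your proposal is correct and follows essentially the same route as the paper: pivotal bits as singleton blocks, blocks forced to meet any witness set, the fixed coordinates of $C(x)$ as a witness set, and the subcube partition induced by stopping an optimal algorithm when $f$ is determined. The only difference is that you verify in more detail that the algorithm's halting transcripts form a subcube partition, a point the paper states without elaboration.
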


\begin{proof}
	We prove the four inequalities one by one. 
	First, consider an algorithm $A$ achieving the minimum in \eqref{eq:def-det-algo-comp}. When stopped at the moment of determining $f$, it induces a subcube partition $\mathcal C(A) \sim f$ with $c(\mathcal C(A),x) = c_f(A,x)$ for every $x$. Taking the maximum over $x$, this implies $a_D(f) \ge sc_D(f)$.
	
	Second, consider a subcube partition $\mathcal C \sim f$ achieving the minimum \eqref{eq:def-det-sc-comp}. For every $x$, the fixed coordinates of $C(x)$ are a witness set $W_x(\mathcal C)$ for $f$ and $x$. Since $c(\mathcal C,x) = \abs{W_x(\mathcal C)} \ge w_f(x)$, taking the maximum over $x$ yields $sc_D(f) \ge w_D(f)$. 
	
	Third, fix any $x$ and consider a maximal collection of blocks $B_1,\ldots,B_m$ as in the definition of $b_f(x)$. We note that any witness set $W$ for $f$ and $x$ must intersect $B_i$ for every $1\le i\le m$. This implies $w_f(x) \ge m = b_f(x)$ and hence, $ w_D(f) \ge b_D(f)$. 
	
	Finally, for every $x$, the pivotal bits provide $s_f(x)$ disjoint blocks $B_1,\ldots,B_{s_f(x)}$ of size 1 satisfying \eqref{eq:block-pivotality}. This implies $s_f(x) \le b_f(x)$, and hence $s_D(f) \le b_D(f)$.
\end{proof}

The following examples show that the five complexity measures are all distinct. 

\begin{example}\label{ex:Nisan-function} For $n$ being a multiple of 4, define the Boolean function $g$ on $n$ bits by
	\begin{equation}
		g(x_1,\ldots,x_n) = \begin{cases} 	1 &\textrm{if}\ \sum_{i=1}^n x_i \in \{\frac{n}{2},\frac{n}{2}+1\}, \\
			0 &\textrm{otherwise}. \\
		\end{cases}
	\end{equation} 
	This function appeared in \cite{Nisan1989} to show that $w_D(g)>b_D(g)>s_D(g)$. It has deterministic complexities
	\begin{align}
		s_D(g) = \frac{n}{2} + 2,\ b_D(g) = \max\left\{\frac{3n}{4},\frac{n}{2}+2\right\},\ \textrm{and}\  w_D(g) = n-1. 
	\end{align}
	To see this, note that $s_g(x) = \frac{n}{2} + 2$ if $x$ has exactly $\frac{n}{2} + 2$ bits with value 1. Moreover, $b_g(x) = \frac{3n}{4}$ and $w_g(x) = n-1$ if $x$ has exactly $\frac{n}{2}$ bits with value 1. 
	
	We note that the following small proposition also implies  $a_D(g) = n$, which could otherwise be seen using an adversary argument. 
	\begin{proposition}\label{prop:Nisan-function}
		For Example \ref{ex:Nisan-function}, $sc_D(g)  = n$.
	\end{proposition}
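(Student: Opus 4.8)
The plan is to prove the two bounds $sc_D(g)\le n$ and $sc_D(g)\ge n$ separately. The upper bound is immediate: the partition of $\{0,1\}^n$ into singletons satisfies $\mathcal C\sim g$ and has $c(\mathcal C,x)=n$ for every $x$. For the lower bound I would argue by contradiction: suppose there is a subcube partition $\mathcal C\sim g$ with $c(\mathcal C,x)\le n-1$ for every $x$, equivalently every $C\in\mathcal C$ has at least one free coordinate.

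The first step is to classify the subcubes $C$ with $\abs{I_C}\le n-1$ on which $g$ is constant. Write $S=[n]\setminus I_C\neq\emptyset$ for the set of free coordinates of $C$ and let $k$ be the number of $1$'s among the coordinates fixed by $C$. As $x$ ranges over $C$, the quantity $\sum_{i=1}^n x_i$ takes every integer value in $\{k,k+1,\dots,k+\abs{S}\}$. If $g\equiv 1$ on $C$, this set of values must be contained in $\{\frac n2,\frac n2+1\}$, which forces $\abs{S}=1$ and $k=\frac n2$; call such a $C$ a \emph{$1$-block}. If instead $g\equiv 0$ on $C$, the set of values must be disjoint from $\{\frac n2,\frac n2+1\}$, which forces either $k+\abs{S}\le\frac n2-1$ or $k\ge\frac n2+2$; in particular every point of $C$ then has fewer than $\frac n2$ ones, or every point of $C$ has more than $\frac n2+1$ ones.

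The crux is then a counting argument on the inputs $x$ with $\sum_i x_i=\frac n2$ and with $\sum_i x_i=\frac n2+1$, which are precisely the $x$ with $g(x)=1$. By the classification, and since by assumption no $C\in\mathcal C$ is a singleton, no $0$-block can contain such a point, so every such point lies in a $1$-block; and a $1$-block consists of exactly two points, one with $\frac n2$ ones (its free coordinate set to $0$) and one with $\frac n2+1$ ones (its free coordinate set to $1$). Since $\mathcal C$ is a partition, counting the points with $\frac n2$ ones shows that the number of $1$-blocks in $\mathcal C$ equals $\binom{n}{n/2}$, while counting the points with $\frac n2+1$ ones shows it equals $\binom{n}{n/2+1}$; but $\binom{n}{n/2+1}=\frac{n/2}{n/2+1}\binom{n}{n/2}<\binom{n}{n/2}$, a contradiction. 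Hence some $C\in\mathcal C$ is a singleton, so $c(\mathcal C,x)=n$ for that point and $sc_D(g)\ge n$. I do not anticipate a genuine obstacle; the only points needing care are the classification step — in particular verifying that a subcube with two or more free coordinates can never be a $1$-block — and the observation that points with $\frac n2$ or $\frac n2+1$ ones cannot be absorbed into $0$-blocks, after which the mismatch between the two central binomial coefficients finishes the argument.
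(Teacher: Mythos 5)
Your proof is correct and follows essentially the same route as the paper: you identify that any non-singleton subcube on which $g\equiv 1$ must be an edge with $n/2$ fixed ones, pairing a point of $S_{n/2}$ with a point of $S_{n/2+1}$, and then derive the contradiction from $\abs{S_{n/2+1}}<\abs{S_{n/2}}$, which is exactly the paper's counting/disjointness argument phrased as a matching count.
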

	\begin{proof}Denote by $S_i$ the set containing all $x$ having exactly $i$ bits with value $1$. Towards a contradiction, assume that there exists a subcube partition $\mathcal C$ containing only subcubes fixing at most $n-1$ bits. Note that for any $x \in S_{n/2}$, $C(x)$ must fix all $n/2$ bits with value 1 and exactly $n/2 -1$ bits with value 0 since $g$ would otherwise not be constant on $C(x)$.  Therefore, $C(x) \neq C(y)$ for all $x,y \in S_{n/2}$ with $x \neq y$. However, note that every such cube $C(x)$ contains an element $x' \in S_{n/2 + 1}$. Since $\abs{S_{n/2+1}}<\abs{S_{n/2}}$, the cubes cannot be disjoint, a contradiction.
	\end{proof}
\end{example}

\begin{example}\label{ex:Savicky-function} Savický's function on $n=4$ bits, which was introduced in \cite{Savicky2000,Savicky2002} and which we call $\textrm{MAJ}_4$ according to \cite{Kothari2015}, is defined by
	\begin{equation}
		\textrm{MAJ}_4 (x_1,x_2,x_3,x_4) = \begin{cases} 	1 &\textrm{if}\ 2x_1 + \sum_{i=2}^4 x_i \ge 3, \\
			0 &\textrm{otherwise}. \\
		\end{cases}
	\end{equation} 
	Note that this is a majority function with the first bit being 
	decisive in the case of a tie. It is easy to see that the deterministic complexities are as follows:
	\begin{align}
		&s_D(\textrm{MAJ}_4) = b_D(\textrm{MAJ}_4) = w_D(\textrm{MAJ}_4) = sc_D(\textrm{MAJ}_4) = 3, \\
		&a_D(\textrm{MAJ}_4) = 4,
	\end{align}
	once one observes that 
	\begin{equation}
		\mathcal C :=\{(\star,1,1,1),(\star,0,0,0),(1,1,0,\star),(0,1,0,\star),(1,\star,1,0),(0,\star,1,0),(1,0,\star,1),(0,0,\star,1)\}
	\end{equation}
	is a subcube partition. 
\end{example}

\subsection{Distributional complexity measures}\label{subsec:defs-hierarchy-dist}

In this subsection, we define \emph{distributional} complexity measures. We denote the product measure on $\{0,1\}^n$ with marginal $p \in [0,1]$ by $\pi_p$. We begin with introducing the distributional analogues of the five deterministic complexity measures encountered in the previous subsection. The \emph{$\pi_p$-distributional algorithmic complexity} is defined as
\begin{equation} \label{eq:def-dist-algo-comp}
	a(f,\pi_p) = \min_A \ \mathbb E_{x \sim \pi_p} \left[c_f(A,x)\right],
\end{equation}
the \emph{$\pi_p$-distributional subcube partition complexity} as
\begin{equation} \label{eq:def-dist-sc-comp}
	sc(f,\pi_p) = \min_{\mathcal C \sim f}\ \mathbb E_{x \sim \pi_p} \left[ c(\mathcal C,x)\right],
\end{equation}
the \emph{$\pi_p$-distributional witness complexity} as
\begin{equation}
	w(f,\pi_p) = \mathbb E_{x \sim \pi_p} \left[  w_f(x) \right],
\end{equation}
the \emph{$\pi_p$-distributional block sensitivity} as
\begin{equation}
	b(f,\pi_p) = \mathbb E_{x \sim \pi_p} \left[b_f(x)\right],
\end{equation}
and the \emph{$\pi_p$-distributional sensitivity}  as
\begin{equation}
	s(f,\pi_p) = \mathbb E_{x \sim \pi_p} \left[  s_f(x) \right].
\end{equation}

The bits queried by an algorithm $A$ until $f$ is determined and the bits fixed by a subcube partition naturally form random subsets of $[n]$ which are measurable with respect to $x$. In the following, we also consider random sets $\mathcal{I} \subseteq [n]$ which are not necessarily measurable with respect to $x$. However, any random set is implicitly assumed to be defined on the \emph{same} probability space as $x$, and we continue to write $\mathbb E_{x \sim \pi_p}$ for the corresponding expectation. A random set $\mathcal{I}\subseteq [n]$ is called a \emph{witness set} for $f$, denoted $\mathcal{I}\sim f$, if $x\vert_\mathcal{I}$ almost surely determines $f$. Clearly, we have $w(f,\pi_p) = \min_{\mathcal I \sim f} \mathbb E_{x \sim \pi_p} [\abs{\mathcal I}]$.

We call a random set $\mathcal I \subseteq [n]$ \emph{local} if for every fixed $J \subseteq [n]$, 
\begin{equation}
	\sigma\left(\{\mathcal I = J\},(x_i:{i\in J})\right)\ \text{is independent of}\ \sigma\left(x_i: {i\notin J}\right).
\end{equation}
In words, conditional on the random set $\mathcal I$ being equal to $J$ and on the values of $x$ on $J$, the bits outside of $J$ are still independent Ber($p$)-distributed.
We are now in position to define a sixth distributional complexity measure, which has no deterministic analogue. The \emph{$\pi_p$-distributional local witness complexity} is defined as 
\begin{equation}
	\ell(f,\pi_p) = \min_{\mathcal I \sim f,\; \mathcal I\; \text{local}} \mathbb E_{x \sim \pi_p} \left[\abs{\mathcal I}\right].
\end{equation}

\begin{proposition} \label{prop:hierarchy-dist} For every Boolean function $f$ and for every $p \in [0,1]$, the distributional complexities satisfy
	\begin{equation}
		a(f,\pi_p) \ge sc(f,\pi_p) \ge \ell(f,\pi_p)\ge w(f,\pi_p) \ge b(f,\pi_p) \ge s(f,\pi_p).
	\end{equation}%
\end{proposition}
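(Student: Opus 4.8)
The plan is to mirror the structure of the proof of Proposition~\ref{prop:hierarchy-det}, establishing the five inequalities one at a time, with the only genuinely new content being the two inequalities involving the new measure $\ell(f,\pi_p)$. For the outer and inner pieces — namely $sc(f,\pi_p)\ge\ell(f,\pi_p)$ is the new middle one, while $a(f,\pi_p)\ge sc(f,\pi_p)$, $w(f,\pi_p)\ge b(f,\pi_p)$, and $b(f,\pi_p)\ge s(f,\pi_p)$ — I would reuse the deterministic arguments essentially verbatim, observing only that each is in fact a pointwise statement in $x$ which, upon taking expectations with respect to $\pi_p$ instead of maxima over $x$, yields the distributional inequality. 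Concretely: an optimal algorithm $A$ for $a(f,\pi_p)$ induces, when stopped at the moment $f$ is determined, a subcube partition $\mathcal C(A)\sim f$ with $c(\mathcal C(A),x)=c_f(A,x)$ for every $x$, so $\E_{x\sim\pi_p}[c(\mathcal C(A),x)]=a(f,\pi_p)$, hence $a(f,\pi_p)\ge sc(f,\pi_p)$. The inequality $w(f,\pi_p)\ge b(f,\pi_p)$ follows because pointwise $w_f(x)\ge b_f(x)$ (any witness set must meet each of the disjoint sensitive blocks), and $b(f,\pi_p)\ge s(f,\pi_p)$ because pointwise $b_f(x)\ge s_f(x)$; take expectations.

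The first new inequality is $sc(f,\pi_p)\ge\ell(f,\pi_p)$. Given a subcube partition $\mathcal C\sim f$ achieving (or approaching) the minimum in \eqref{eq:def-dist-sc-comp}, define the random set $\mathcal I := I_{C(x)}$, the set of bits fixed by the subcube containing $x$. This is measurable with respect to $x$, and since $\mathcal C\sim f$ it is a witness set, $\mathcal I\sim f$, with $\abs{\mathcal I}=c(\mathcal C,x)$. The point I must verify is that $\mathcal I$ is \emph{local}: for a fixed $J\subseteq[n]$, the event $\{\mathcal I=J\}$ together with the values $(x_i:i\in J)$ determines exactly which subcube $C\in\mathcal C$ with $I_C=J$ the point $x$ lies in, and conditional on this, the coordinates $(x_i:i\notin J)$ range freely and independently over $\{0,1\}^{[n]\setminus J}$ with the product law $\pi_p$ restricted there — because a subcube is precisely a product set with the non-fixed coordinates unconstrained. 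This gives $\ell(f,\pi_p)\le\E_{x\sim\pi_p}[\abs{\mathcal I}]=\E_{x\sim\pi_p}[c(\mathcal C,x)]$, and taking the infimum over $\mathcal C$ yields the claim.

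The second new inequality, $\ell(f,\pi_p)\ge w(f,\pi_p)$, is immediate from definitions: every local witness set is in particular a witness set, so the minimum defining $\ell$ is over a subset of the sets considered for $w$, and $w(f,\pi_p)=\min_{\mathcal I\sim f}\E_{x\sim\pi_p}[\abs{\mathcal I}]\le\min_{\mathcal I\sim f,\,\mathcal I\text{ local}}\E_{x\sim\pi_p}[\abs{\mathcal I}]=\ell(f,\pi_p)$. The main obstacle — really the only subtle point — is the locality verification in the previous paragraph: one has to be careful that ``conditioning on $\{\mathcal I=J\}$'' does not, through the structure of $\mathcal C$, secretly constrain the outside coordinates. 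Since $\{\mathcal I=J\}=\bigcup\{x\in C:C\in\mathcal C,\ I_C=J\}$ and each such $C$ factorizes as (a fixed pattern on $J$) $\times\ \{0,1\}^{[n]\setminus J}$, conditioning on $\mathcal I=J$ and on $x\vert_J$ pins down one such $C$ and leaves $x\vert_{[n]\setminus J}$ with its unconditioned product distribution; I would spell this factorization out explicitly to close the argument.
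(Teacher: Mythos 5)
Your proposal is correct and follows essentially the same route as the paper: the inequalities $a \ge sc$ and $w \ge b \ge s$ are obtained by replacing maxima with expectations in the deterministic proof, $\ell \ge w$ is immediate from the definitions, and $sc \ge \ell$ comes from observing that the random set $\mathcal{I}_{\mathcal{C}}(x) := I_{C(x)}$ induced by a subcube partition $\mathcal{C} \sim f$ is a local witness set. The only difference is that you spell out the locality verification (via the product structure of subcubes, showing $\{\mathcal I = J\}$ and $x\vert_J$ are measurable in $\sigma(x_i : i \in J)$), which the paper leaves as a straightforward check; your argument for it is correct.
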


\begin{proof}
	The inequalities $a(f,\pi_p) \ge sc(f,\pi_p)$ and $w(f,\pi_p) \ge b(f,\pi_p) \ge s(f,\pi_p)$ can be obtained analogously to the deterministic case in Proposition \ref{prop:hierarchy-det}, by simply replacing taking the maximum over $x$ by expectations with respect to $\pi_p$. The inequality $\ell(f,\pi_p)\ge w(f,\pi_p)$ is immediate from the definition. Finally, we establish the inequality $sc(f,\pi_p) \ge \ell(f,\pi_p)$. Given a subcube partition $\mathcal C$, we write $\mathcal{I}_{\mathcal{C}}$ for the random set defined by $\mathcal{I}_{\mathcal{C}}(x):= I_{C(x)}$. It is straightforward to check that $\mathcal{I}_\mathcal{C}$ is local. As $\mathcal{C} \sim f$ implies $\mathcal{I}_{\mathcal{C}} \sim f$, the desired inequality follows.
\end{proof}

Examples \ref{ex:majority-3}, \ref{ex:constant-function} and \ref{ex:separation-local-subcube} show that the six complexity measures are all distinct. In Section  \ref{sec:asymp-separation}, we will study the stronger notion of asymptotic separation.

\begin{example}\label{ex:majority-3}
	The MAJORITY function on $n=3$ bits, defined by 
	\begin{equation}
		\textrm{MAJ}_3 (x_1,x_2,x_3) = \begin{cases} 	1 &\textrm{if} \sum_{i=1}^3 x_i \ge 2, \\
			0 &\textrm{if} \sum_{i=1}^3 x_i \le 1, \\
		\end{cases}
	\end{equation}
	has distributional complexities
	\begin{align}
		&s(\textrm{MAJ}_3,\pi_p) = 2- 2 p^3 - 2(1-p)^3, \\
		&b(\textrm{MAJ}_3,\pi_p) = 2 - p^3 - (1-p)^3, \\
		&w(\textrm{MAJ}_3,\pi_p) = 2, \\
		&sc(\textrm{MAJ}_3,\pi_p) = a(\textrm{MAJ}_3, \pi_p) = 2 + 2p(1-p). 
	\end{align}
	Since the notion of local witness complexity involves an infinite number of choices for the joint distribution of $(\mathcal I,x)$, its computation seems to be more difficult than the other distributional complexities. However, in some cases, this complexity can be computed explicitly as the following proposition demonstrates.
	\begin{proposition}\label{prop:majority-3}
		$\ell(\textrm{MAJ}_3,\pi_p) = 2 + 2p(1-p)$. 
	\end{proposition}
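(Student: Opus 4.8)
The plan is as follows. The upper bound $\ell(\textrm{MAJ}_3,\pi_p)\le 2+2p(1-p)$ is immediate: Proposition~\ref{prop:hierarchy-dist} gives $\ell(\textrm{MAJ}_3,\pi_p)\le sc(\textrm{MAJ}_3,\pi_p)$, and the right-hand side equals $2+2p(1-p)$ as recorded above. So the work is entirely in the matching lower bound, namely showing that every local witness set $\mathcal I\sim\textrm{MAJ}_3$ satisfies $\E_{x\sim\pi_p}[\abs{\mathcal I}]\ge 2+2p(1-p)$. Every $x\in\{0,1\}^3$ has minimum witness size $w_{\textrm{MAJ}_3}(x)=2$ (revealing two agreeing coordinates determines the majority, while a single coordinate never does), so $2\le\abs{\mathcal I}\le 3$ almost surely, whence $\E[\abs{\mathcal I}]=2+\P(\abs{\mathcal I}=3)$ and it is enough to prove $\P(\abs{\mathcal I}=3)\ge 2p(1-p)$.

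Next I would pin down the support of $\mathcal I$: almost surely $\mathcal I\in\{\{1,2\},\{1,3\},\{2,3\},\{1,2,3\}\}$, and since $\mathcal I\sim\textrm{MAJ}_3$, on the event $\{\mathcal I=\{i,j\}\}$ the realized pair $\{i,j\}$ is almost surely a witness set for $\textrm{MAJ}_3$ at $x$, which for a two-element set forces the two revealed bits to agree; thus $x_i=x_j$ a.s.\ on $\{\mathcal I=\{i,j\}\}$, and in particular $\P(\mathcal I=\{i,j\})=\P(\mathcal I=\{i,j\},\,x_i=x_j=0)+\P(\mathcal I=\{i,j\},\,x_i=x_j=1)$. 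Because the four sets above give a partition of the sample space and $p^2+(1-p)^2=1-2p(1-p)$, the target $\P(\abs{\mathcal I}=3)\ge 2p(1-p)$ is equivalent to
\[
	\P(\mathcal I=\{1,2\}) + \P(\mathcal I=\{1,3\}) + \P(\mathcal I=\{2,3\}) \le p^2 + (1-p)^2 .
\]

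The only place locality enters is at the two constant configurations. By the locality property with $J=\{1,2\}$, the event $\{\mathcal I=\{1,2\},\,x_1=x_2=1\}$ is independent of $\{x_3=1\}$, so $\P(\mathcal I=\{1,2\},\,x=(1,1,1)) = p\cdot\P(\mathcal I=\{1,2\},\,x_1=x_2=1)$, and the analogous identities hold for $J=\{1,3\}$ and $J=\{2,3\}$. Summing over the four possible values of $\mathcal I$ the contributions to $\P(x=(1,1,1))=p^3$, and discarding the nonnegative term $\P(\mathcal I=\{1,2,3\},\,x=(1,1,1))$, yields $p\bigl(\P(\mathcal I=\{1,2\},\,x_1=x_2=1)+\P(\mathcal I=\{1,3\},\,x_1=x_3=1)+\P(\mathcal I=\{2,3\},\,x_2=x_3=1)\bigr)\le p^3$. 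The symmetric computation at $x=(0,0,0)$ gives the same inequality with each ``$=1$'' replaced by ``$=0$'', prefactor $1-p$, and right-hand side $(1-p)^3$. Dividing through by $p$ and by $1-p$ respectively (the cases $p\in\{0,1\}$ being trivial since then $2+2p(1-p)=2$) and adding the two inequalities, the left-hand sides recombine — via the decomposition of $\P(\mathcal I=\{i,j\})$ recorded in the previous step — into $\P(\mathcal I=\{1,2\})+\P(\mathcal I=\{1,3\})+\P(\mathcal I=\{2,3\})$, while the right-hand sides sum to $p^2+(1-p)^2$. This is exactly the displayed inequality, and the proposition follows.

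There is no hard computation in this argument; the two points that need care are (i) that a two-element witness set for $\textrm{MAJ}_3$ forces the two queried coordinates to agree — this is what makes the four events a genuine partition and couples their probabilities to the marginals of $\pi_p$ — and (ii) the observation that locality is only needed at the two constant strings $(1,1,1)$ and $(0,0,0)$, where the ``one unrevealed bit is still $\mathrm{Ber}(p)$'' property is precisely what expresses $\P(\mathcal I=\{i,j\})$ in terms of the $\pi_p$-masses of those strings. Everything else is bookkeeping.
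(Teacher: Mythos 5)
Your proof is correct and uses essentially the same idea as the paper: the only substantive input is locality applied at the two constant strings $(1,1,1)$ and $(0,0,0)$, which caps the total probability of the size-two witness events by $p^2+(1-p)^2$. Your bookkeeping via $\E_{x\sim\pi_p}[\abs{\mathcal I}]=2+\P(\abs{\mathcal I}=3)$ is a slightly leaner packaging of the paper's explicit computation $\E_{x\sim\pi_p}[\abs{\mathcal I}]=3-\beta^1/(1-p)-\beta^0/p$ with $\beta^1\le p^2(1-p)$, $\beta^0\le p(1-p)^2$, but the underlying constraint is identical.
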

	\begin{proof}
		The upper bound follows directly from Proposition \ref{prop:hierarchy-dist} and the value of $sc(\textrm{MAJ}_3,\pi_p)$ given above. For the lower bound, let $\mathcal I$ be a local witness set for $\textrm{MAJ}_3$. We analyze the constraints on the joint distribution of $(\mathcal I, x)$ imposed by being a witness set and by locality. First, since $\mathcal I$ is a witness set for $\textrm{MAJ}_3$, we must have $\{i,j\} \subseteq \mathcal I$ for some $i \neq j \in [3]$ satisfying $x_i = x_j$. Thus, there are two possibilities for $\mathcal I$ if the bits are not all equal (e.g., $\mathcal I = \{1,2\}$ or $\mathcal I = \{1,2,3\}$ if $x=(1,1,0)$), and four possibilities for $\mathcal I$ if the bits are all equal. Second, since $\mathcal I$ is local, we have for every permutation $(i,j,k)$ of $(1,2,3)$ and every $z \in \{0,1\}$,
		\begin{equation}\label{eq:majority-3-local-1}
			\P_{x \sim \pi_p}\left[\mathcal I = \{i,j\}, x_i = x_j = z, x_k = 1\right] = \frac{p}{1-p} \cdot \P_{x \sim \pi_p}\left[\mathcal I = \{i,j\}, x_i = x_j = z, x_k = 0\right] .
		\end{equation}
		For abbreviation, we define 
		\begin{equation}
			\beta_{\{i,j\}}^z := \P_{x \sim \pi_p}\left[\mathcal I = \{i,j\}, x_i = x_j = z, x_k = 1-z \right] \in [0,p^{1+z}(1-p)^{2-z}], 
		\end{equation}
		where $(i,j,k)$ is a permutation of $(1,2,3)$ and $z \in \{0,1\}$, and $\beta^z := \beta_{\{1,2\}}^z + \beta_{\{2,3\}}^z + \beta_{\{1,3\}}^z$.
		The main observation is now that \eqref{eq:majority-3-local-1} implies
		\begin{equation}\label{eq:majority-3-local-2}
			\frac{p}{1-p}  \beta^1 = \P_{x \sim \pi_p}\left[ \abs{\mathcal I} = 2, x= (1,1,1)\right]  \le p^3 \iff \beta^1 \le p^2(1-p),
		\end{equation}
		and analogously, $\beta^0 \le p(1-p)^2$.
		
		Finally, we express the expected size of $\mathcal I$ in terms of $\beta^1$ and $\beta^0$.
		\begin{align}
			\mathbb E_{x \sim \pi_p} \left[\abs{\mathcal I}\right] &= 2 \beta^1 + 3 \left(3p^2(1-p) -\beta^1\right) + 2 \frac{p}{1-p}\beta^1 + 3\left(p^3 - \frac{p}{1-p}\beta^1 \right) \\
			&+ 2 \beta^0 + 3 \left(3p(1-p)^2 -\beta^0\right) + 2 \frac{1-p}{p}\beta^0 + 3\left((1-p)^3 - \frac{1-p}{p}\beta^0 \right)\\
			&= 3 - \left(\beta^1 + \frac{p}{1-p}\beta^1 \right) - \left(\beta^0 + \frac{1-p}{p}\beta^0 \right) = 3 - \frac{\beta^1}{1-p} - \frac{\beta^0}{p}.
		\end{align}
		Using the previously obtained upper bounds on $\beta^1$ and $\beta^0$, we conclude that 
		\begin{equation}
			\mathbb E_{x \sim \pi_p} \left[\abs{\mathcal I}\right] \ge 3 - p^2 - (1-p)^2 = 2 + 2p(1-p),
		\end{equation}
		and the lower bound on $\ell(\textrm{MAJ}_3,\pi_p)$ follows.		
	\end{proof}
\end{example}

It remains to separate $sc(\cdot,\pi_{p})$ and $a(\cdot,\pi_{p})$ as well as  $\ell(\cdot,\pi_{p})$ and $sc(\cdot,\pi_{p})$, which we will do in the following two examples.

\begin{example}\label{ex:constant-function}
	The ALL-EQUAL function on $n=3$ bits, defined by 
	\begin{equation}
		\textrm{A-EQ}_3 (x_1,x_2,x_3) = \begin{cases} 1 &\textrm{if}\ x_1 = x_2 = x_3, \\
			0 &\textrm{otherwise}, \\
		\end{cases}
	\end{equation}
	has distributional complexities
	\begin{align}
		&s(\textrm{A-EQ}_3,\pi_p) = 1 + 2p^3 + 2(1-p)^3, \\
		&b(\textrm{A-EQ}_3,\pi_p) = w(\textrm{A-EQ}_3,\pi_p) = \ell(\textrm{A-EQ}_3,\pi_p) = sc(\textrm{A-EQ}_3,\pi_p) = 2 + p^3 + (1-p)^3, \\
		&a(\textrm{A-EQ}_3, \pi_p) = 2 + p^2 + (1-p)^2. 
	\end{align}
	An optimal subcube partition is given by 
	\begin{equation}
		\mathcal C =\{(0,0,0),(1,0,\star),(\star,1,0),(0,\star,1),(1,1,1)\}. 
	\end{equation}
\end{example}

\begin{example}\label{ex:separation-local-subcube}
	The Boolean function $g$ on $n=4$ bits, defined by
	\begin{equation}
		g(x_1,\ldots,x_4) = \begin{cases} 	1 &\textrm{if}\ (x_1,\ldots,x_4) \in  \{(1,0,0,1),(0,0,0,1),(0,1,0,1),(0,1,1,0)\}, \\
			0 &\textrm{otherwise}, \\
		\end{cases}
	\end{equation} 
	has distributional complexities
	
	\begin{equation}
		s(g,\pi_{1/2}) = 3/2,\ b(g,\pi_{1/2}) = 9/4,\ \text{and}\ w(g,\pi_{1/2}) = 37/16.
	\end{equation}
	
	\begin{proposition}\label{prop:separation-local-subcube}
		For Example \ref{ex:separation-local-subcube}, $\ell(g,\pi_{1/2}) \le 21/8$ and $a(g,\pi_{1/2}) = sc(g,\pi_{1/2}) = 11/4$. 
	\end{proposition}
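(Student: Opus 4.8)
The plan is to establish the two assertions separately. For the equality $a(g,\pi_{1/2}) = sc(g,\pi_{1/2}) = 11/4$ the starting point is Proposition~\ref{prop:hierarchy-dist}, which already gives $a(g,\pi_{1/2}) \ge sc(g,\pi_{1/2})$, so it suffices to exhibit an algorithm of cost $\le 11/4$ and to prove $sc(g,\pi_{1/2}) \ge 11/4$. For the upper bound I would use the algorithm $A$ that first queries $x_3$ and $x_4$: if $x_3 = x_4$ it stops with output $0$; if $(x_3,x_4)=(0,1)$ it queries $x_1$, stopping with output $1$ when $x_1 = 0$ and querying $x_2$ otherwise; and if $(x_3,x_4)=(1,0)$ it queries $x_1$, stopping with output $0$ when $x_1 = 1$ and querying $x_2$ otherwise. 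Its expected number of queries until $g$ is determined is $\tfrac12\cdot 2 + \tfrac12\bigl(\tfrac12\cdot 3+\tfrac12\cdot 4\bigr) = \tfrac{11}{4}$, and since $A$, stopped at the moment $g$ is determined, induces a subcube partition $\mathcal{C}\sim g$ with $c(\mathcal{C},x) = c_g(A,x)$ for all $x$, this also shows $sc(g,\pi_{1/2}) \le \tfrac{11}{4}$.

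For the lower bound $sc(g,\pi_{1/2}) \ge 11/4$, fix any $\mathcal{C}\sim g$. Each subcube of $\mathcal{C}$ is monochromatic, hence contained either in $g^{-1}(1)$ (four points) or in $g^{-1}(0)$ (twelve points); thus $\mathcal{C}$ splits into a partition of each of these two sets into monochromatic subcubes, and $16\,\E_{x\sim\pi_{1/2}}[c(\mathcal{C},x)] = E_1 + E_0$ with $E_1 = \sum_{x\in g^{-1}(1)} c(\mathcal{C},x)$ and $E_0 = \sum_{x\in g^{-1}(0)} c(\mathcal{C},x)$. The only monochromatic subcubes inside $g^{-1}(1)$ are its four singletons together with $(\star,0,0,1)$ and $(0,\star,0,1)$, and the latter two share the point $(0,0,0,1)$, so any partition of $g^{-1}(1)$ uses at least two singletons, whence $E_1 \ge 4+4+3+3 = 14$. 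For $E_0$ I would enumerate the monochromatic subcubes contained in $g^{-1}(0)$: none of codimension $\le 1$, exactly seven of codimension $2$ (call them $A_1,\dots,A_7$), and various edges (codimension $3$) and singletons. If a partition of $g^{-1}(0)$ uses $\alpha$ subcubes of codimension $2$ and $\beta$ of codimension $3$, then $4\alpha + 2\beta \le 12$ and $E_0 = 48 - 8\alpha - 2\beta$, so I only have to exclude $\alpha = 3$ and $(\alpha,\beta)=(2,2)$, the sole possibilities with $8\alpha+2\beta>18$. The structural facts that make this work are that the three points $(0,1,0,0)$, $(0,1,1,1)$, $(1,1,0,1)$ each lie in a \emph{unique} one of $A_1,\dots,A_7$ --- say $A_i,A_j,A_k$ respectively --- that $A_i\cap A_j = \emptyset$ while $A_k$ meets both $A_i$ and $A_j$, and that each of these three points has exactly two codimension-$3$ monochromatic subcubes through it, all six resulting ``partners'' lying among the four points $(0,0,0,0)$, $(0,0,1,1)$, $(1,1,0,0)$, $(1,1,1,1)$. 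The first fact rules out $\alpha=3$ outright (one would need all of $A_i,A_j,A_k$), and a short case analysis on which of the three points fails to be covered by a codimension-$2$ subcube, and then on how its remaining codimension-$3$ or singleton options interact with the two codimension-$2$ subcubes used, rules out $(\alpha,\beta)=(2,2)$. Hence $E_0 \ge 30$ and $16\,sc(g,\pi_{1/2}) \ge 44$.

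For the bound $\ell(g,\pi_{1/2}) \le 21/8$ I would use the following description of local witness sets: given a probability measure $\mu$ on the monochromatic subcubes of $g$ with $\sum_{C\ni z}\mu(C)/\abs{C} = 2^{-4}$ for every $z\in\{0,1\}^4$, sample a random subcube $C\sim\mu$, then $x$ uniformly in $C$, and set $\mathcal{I} := I_C$; then $x\sim\pi_{1/2}$, the set $\mathcal{I}$ is a witness set because $C$ is monochromatic, it is local because conditioning on $\{\mathcal{I}=J\}$ together with $(x_i)_{i\in J}$ pins down $C$ and hence leaves $(x_i)_{i\notin J}$ i.i.d., and $\E_{x\sim\pi_{1/2}}[\abs{\mathcal{I}}] = \sum_C \mu(C)\,\abs{I_C}$. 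It therefore suffices to exhibit such a $\mu$ of value $21/8$. Concretely I would take $\mu$ to put mass $\tfrac18$ on each of $A_i,A_j,A_k$ above and on the two codimension-$2$ subcubes of $g^{-1}(0)$ disjoint from $\{(1,1,0,0),(1,1,1,1)\}$; mass $\tfrac1{16}$ on each of the two codimension-$3$ subcubes $(\star,0,0,1)$, $(0,\star,0,1)$ of $g^{-1}(1)$ and on the singleton $\{(0,1,1,0)\}$; and mass $\tfrac1{32}$ on the six remaining singletons needed to balance the not-yet-saturated points. One then checks the sixteen balance equations $\sum_{C\ni z}\mu(C)/\abs{C} = \tfrac1{16}$ and computes $\sum_C\mu(C)\abs{I_C} = \tfrac{14}{16} + \tfrac{28}{16} = \tfrac{21}{8}$, the two contributions coming from the subcubes of $g^{-1}(1)$ and of $g^{-1}(0)$.

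The one genuinely delicate step is the lower bound $E_0 \ge 30$: it requires the full (if short) list of small monochromatic subcubes of $g^{-1}(0)$ and the case analysis excluding the two most efficient configurations; by contrast the upper bounds and the verification of $\mu$ are finite computations, and the reason $\ell$ can strictly beat $sc$ here is precisely that this $\mu$ is not the indicator measure of any subcube partition.
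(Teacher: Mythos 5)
Your proposal is correct, and it splits into a part that mirrors the paper and a part that takes a genuinely different route. For $a(g,\pi_{1/2})=sc(g,\pi_{1/2})=11/4$ you argue as the paper does: the same algorithm (query $x_3,x_4$, then $x_1$, then $x_2$) gives the upper bound, and the lower bound comes from splitting any $\mathcal{C}\sim g$ into its traces on $g^{-1}(1)$ and $g^{-1}(0)$. Your bookkeeping $16\,sc(g,\pi_{1/2})\ge E_1+E_0$ with $E_1\ge 14$ and $E_0=48-8\alpha-2\beta\ge 30$ is a cleaner reorganization of the paper's ``average $7/2$ on the ones, average $5/2$ on the zeros'' computation; your list of exactly seven size-4 monochromatic subcubes in $g^{-1}(0)$ is right, only $\alpha=3$ and $(\alpha,\beta)=(2,2)$ need excluding, and your structural facts check out: $(0,1,0,0),(0,1,1,1),(1,1,0,1)$ lie respectively only in $(\star,\star,0,0),(\star,\star,1,1),(1,1,\star,\star)$, the first two of these cubes are disjoint while the third meets both, and the only edge-partners of the three points are $(0,0,0,0),(0,0,1,1),(1,1,0,0),(1,1,1,1)$; the forced-partner case analysis then does close (in each case the leftover four points fail to be a subcube), at a level of detail comparable to the paper's own ``case-by-case analysis''. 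Where you genuinely diverge is $\ell(g,\pi_{1/2})\le 21/8$: the paper exhibits two explicit witness maps $\mathcal{I}_0,\mathcal{I}_1$ and randomizes between them with an independent fair coin, whereas you build the local set from a balanced probability measure $\mu$ on monochromatic subcubes (with $\sum_{C\ni z}\mu(C)/\abs{C}=2^{-4}$ for every $z$), sampling $C\sim\mu$ and then $x$ uniformly in $C$. Your specific $\mu$ works: the five size-4 cubes $(1,1,\star,\star),(\star,\star,0,0),(\star,\star,1,1),(\star,0,1,\star),(\star,0,\star,0)$ at mass $1/8$, the cubes $(\star,0,0,1),(0,\star,0,1)$ and the singleton $(0,1,1,0)$ at mass $1/16$, and the six singletons $(0,1,0,0),(0,1,1,1),(1,1,0,1),(1,1,1,0),(1,0,0,1),(0,1,0,1)$ at mass $1/32$ satisfy the sixteen balance equations and give $\sum_C\mu(C)\abs{I_C}=21/8$. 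This ``fractional subcube partition'' viewpoint buys something: it reduces the construction of local witness sets to a linear feasibility problem and makes transparent why they can beat every (even randomized, product-form) subcube partition, since the cube is sampled jointly with $x$ rather than independently of it — nicely complementing the discussion of randomized subcube partitions in Subsection 2.4 — while the paper's mixture $\mathcal{I}_G$ has the virtue of being written down explicitly as a function of $(x,G)$.
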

	\begin{proof}
		To see this, we first construct a local witness set of expected size $21/8$. Define 
		\begin{equation}
			\mathcal{I}_0(x_1,\ldots,x_4) = \begin{cases} 	\{1,2\} &\textrm{if}\ x_1 = x_2 = 1, \\
				\{2,3\} &\textrm{if}\ x_2 = 0,\ x_3 = 1, \\
				\{1,2,3,4\} &\textrm{if}\ x_1 = 0,\ x_2 = 1, \\
				\{2,3,4\} &\textrm{if}\ x_2 = x_3 = 0,\ x_4 = 1, \\
				\{2,4\} & \textrm{if}\ x_2 = x_3 = x_4 = 0, \\
			\end{cases}
		\end{equation} 
		and 
		\begin{equation}
			\mathcal{I}_1(x_1,\ldots,x_4) = \begin{cases} 	\{3,4\} &\textrm{if}\ x_3 = x_4, \\
				\{1,2,3,4\} &\textrm{if}\ x_2 = 1,\ x_3 \neq x_4, \\
				\{2,3,4\} &\textrm{if}\ x_2 = x_3 = 0,\ x_4 = 1, \\
				\{2,4\} & \textrm{if}\ x_2 = x_4 = 0,\ x_3 = 1. \\
			\end{cases}
		\end{equation}
		It is easy to see that $\mathcal{I}_0$ and $\mathcal{I}_1$ are witness sets of size $\mathbb{E}_{x\sim \pi_{1/2}}[\abs{\mathcal{I}_0}]=\mathbb{E}_{x\sim \pi_{1/2}}[\abs{\mathcal{I}_1}]=21/8$, but not local. However, for a Ber($1/2$)-distributed random variable $G$ that is independent of $x$, one can check that the random set $\mathcal{I}_G$ is a local witness set.
		
		Second, we describe an algorithm whose expected number of queries is $11/4$. The algorithm begins with querying $x_3$ and $x_4$. Note that $f=0$ if $x_3 = x_4$. The algorithm then queries $x_1$. Note that $f=0$ (resp. $f=1$)  if $(x_1,x_3,x_4) = (1,1,0)$ (resp.\ $(0,0,1)$). Lastly, the algorithm queries $x_2$.
		
		Third, we need to argue that every subcube partition determining $f$ fixes on average at least $11/4$ bits. We study the subcube partitions of $\{x:f(x)=1\}$ and $\{x:f(x)=0\}$ separately. It is easy to see that $\{(\star,0,0,1),(0,1,0,1),(0,1,1,0)\}$  is an optimal subcube partition of $\{x:f(x)=1\}$, and it fixes on average $7/2$ bits (conditional on $f=1$). Since $11/4 = (1/4) \cdot (7/2) + (3/4)\cdot (5/2)$, we are thus left with showing that an optimal subcube partition of $\{x:f(x)=0\}$ fixes on average at least $5/2$ bits (conditional on $f=0$). We first observe that every subcube in $\{x:f(x)=0\}$ must fix at least 2 bits, i.e.\ has size at most 4. Next, if a subcube partition of $\{x:f(x)=0\}$ contains only one (resp.\ no) subcube of size 4, it fixes on average at least $8/3$ (resp.\ $3$) bits, which is larger than $5/2$. Otherwise, the subcube partition of $\{x:f(x)=0\}$ contains at least two subcubes of size 4. A case-by-case analysis allows to verify that for all possible combinations of two subcubes of size 4, the remaining 4 configurations can neither be combined into one subcube of size 4 nor into two subcubes of size 2. This establishes that any subcube partition of $\{x:f(x)=0\}$ fixes on average at least $(2\cdot 8 + 3 \cdot 2 + 4 \cdot 2)/12 = 5/2$ bits, and thereby completes the argument.
	\end{proof}
\end{example}

We note that for any Boolean function $f$, the separation $s_D(f) < b_D(f) < w_D(f)$ directly implies the separation $s(f,\pi_p) < b(f,\pi_p) < w(f,\pi_p)$ for $p \in (0,1)$ since these three complexity measures are ordered pointwise for every $x$. However, the converse is generally not true which can be seen, for example, by considering the majority function on three bits satisfying $s(\textrm{MAJ}_3,\pi_p) < b(\textrm{MAJ}_3,\pi_{p}) < w(\textrm{MAJ}_3,\pi_{p})$ but $s_D(\textrm{MAJ}_3) = b_D(\textrm{MAJ}_3) = w_D(\textrm{MAJ}_3)=2$.

We also note that the separation $sc_D(\textrm{MAJ}_4) < a_D(\textrm{MAJ}_4) $ in Example \ref{ex:Savicky-function} does not imply the corresponding separation in the distributional case, and it can be checked that $sc(\textrm{MAJ}_4,\pi_{1/2}) = a(\textrm{MAJ}_4,\pi_{1/2}) $.

We will see in Proposition \ref{prop:witness-block-equality} that distributional witness complexity and block sensitivity are equal for a large class of functions. However, asking for equality of distributional sensitivity and block sensitivity severely restricts which Boolean function we have and we will prove below that this equality implies that the function is a parity function (or negative parity) on some collection of the bits. 
Note that  $s(f,\pi_p)=b(f,\pi_p)$ implies that the sensitivity and block
sensitivity agree on every bit string which in term implies that every bit
string has a pivotal bit (unless f is  constant). The last thing certainly does not imply we are a parity function on some subset of the bits since we can take any Boolean function and XOR it with one bit, guaranteeing at least one pivotal bit.

\begin{proposition}\label{prop:s-b-equal}
	Let $f$ be a Boolean function and $p \in (0,1)$. The following are equivalent: 
	\begin{itemize}
		\item[(i)] $f$ or $1-f$ is the PARITY function on some subset of the bits,
		\item[(ii)] $b(f,\pi_{p}) = s(f,\pi_{p})$.
	\end{itemize}
\end{proposition}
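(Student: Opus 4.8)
\emph{Proof proposal.} The plan is to reduce both directions to a pointwise statement relating $s_f$ and $b_f$, and then to settle the nontrivial direction by a connectivity argument on the cube combined with a parity count on $2$-faces. Call a block $B$ \emph{sensitive at $x$} if $f(x^B)\neq f(x)$.

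For $(i)\Rightarrow(ii)$ I would simply compute. If $f=c\oplus\bigoplus_{i\in S}x_i$ for some $c\in\{0,1\}$ and $S\subseteq[n]$, then flipping a bit $i\in S$ always flips the value while flipping $i\notin S$ never does, so $s_f(x)=\abs{S}$ for every $x$; and a block $B$ is sensitive at $x$ precisely when $\abs{S\cap B}$ is odd, so every sensitive block meets $S$, whence any disjoint family of sensitive blocks has size at most $\abs{S}$, which is attained by the singletons $\{i\}$, $i\in S$. Thus $b_f(x)=\abs{S}=s_f(x)$ for all $x$, and taking $\pi_p$-expectations gives (ii). The same holds for $1-f$, since replacing $f$ by $1-f$ changes neither $s_f$ nor $b_f$.

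For $(ii)\Rightarrow(i)$ I would argue in three steps. \textbf{Step 1 (pointwise equality).} Since the pivotal singletons at $x$ form disjoint sensitive blocks we have $s_f(x)\le b_f(x)$ for every $x$, and $\pi_p$ has full support for $p\in(0,1)$; hence $b(f,\pi_p)=s(f,\pi_p)$ forces $s_f(x)=b_f(x)$ for all $x$. \textbf{Step 2 (no pivotal-free sensitive block).} I claim this implies that every sensitive block at every $x$ contains a pivotal bit. Otherwise some sensitive block at some $x$ would contain a minimal sensitive block $B'$ with no pivotal bit; $B'$ cannot be a singleton (a pivotal bit would give one), so $\abs{B'}\ge 2$, and $B'$ is disjoint from the set of pivotal bits at $x$, so $\{B'\}$ together with the pivotal singletons is a disjoint family of $s_f(x)+1$ sensitive blocks, contradicting $b_f(x)=s_f(x)$. \textbf{Step 3 (forcing parity).} Call a bit $i$ \emph{relevant} if $f(x^i)\neq f(x)$ for some $x$, let $R$ be the set of relevant bits, and note that $f$ ignores the bits outside $R$. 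I would show that every $i\in R$ is pivotal at \emph{every} $x$; it then follows that on $\{0,1\}^R$ flipping any coordinate flips $f$, so $f(x)=f(\mathbf{0}_R)\oplus\bigoplus_{i\in R}x_i$, i.e.\ $f$ or $1-f$ is PARITY on $R$ (with $R=\emptyset$ meaning $f$ is constant, read as PARITY on the empty set). To prove an $i\in R$ is pivotal everywhere, pass to the quotient cube on coordinates $[n]\setminus\{i\}$: each $w\in\{0,1\}^{[n]\setminus\{i\}}$ indexes an $i$-edge, \emph{active} if $f$ differs on its two endpoints. If some $i$-edge were inactive, then since at least one is active (relevance) and $\{0,1\}^{[n]\setminus\{i\}}$ is connected, there are adjacent $w,w'$ (differing in a coordinate $j\neq i$) with the $i$-edge over $w$ active and the one over $w'$ inactive. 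On the $2$-face spanned by $\{i,j\}$ through the inactive edge, the XOR of the four values of $f$ equals $0\oplus 1=1$ (two equal values on the inactive edge, two distinct on the active one), so exactly one of the two diagonal moves along $\{i,j\}$ is sensitive; in either case, at the corresponding corner of the face both coordinate edges are flat while the block $\{i,j\}$ is sensitive — a pivotal-free sensitive block, contradicting Step~2.

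The crux is Step~3: converting a single point of non-pivotality into an explicit $2$-face violating Step~2. The two ingredients that make this go through are the connectivity of the quotient cube $\{0,1\}^{[n]\setminus\{i\}}$, which yields an adjacent active/inactive pair of $i$-edges, and the elementary parity fact that the values of $f$ around any $4$-cycle sum to an even number, which locates the offending corner. Everything else is bookkeeping — in particular recording $s_{1-f}=s_f$, $b_{1-f}=b_f$ so the ``or $1-f$'' in (i) is harmless, and handling the constant case as the boundary case $R=\emptyset$.
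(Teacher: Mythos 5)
Your proof is correct, but the route you take for $(ii)\Rightarrow(i)$ is genuinely different from the paper's. Both arguments begin the same way: full support of $\pi_p$ upgrades the equality of expectations to the pointwise equality $s_f(x)=b_f(x)$, and from this one extracts the key consequence that no sensitive block can avoid the pivotal set (your Step 2; in the paper this appears as $f(x^B)=f(x)$ for all $B\subseteq[n]\setminus S_f(x)$, together with a similar block construction $B=\{j\}\cup\{i\notin S_f(x):x_i\neq y_i\}$ later on). From there the paper views $f$ as a site percolation configuration on the hypercube and proves a structural statement: each connected component of a level set equals the subcube obtained by freezing the pivotal coordinates, and the pivotal set is constant on each component; the parity structure then follows by induction on $\abs{I}$ over the subcubes $D_I$. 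You instead show directly that every relevant bit is pivotal at \emph{every} point, via a local argument: if not all $i$-edges were active, connectivity of the quotient cube $\{0,1\}^{[n]\setminus\{i\}}$ produces adjacent active/inactive $i$-edges, and the parity of $f$ around the resulting $4$-cycle locates a corner at which the block $\{i,j\}$ is sensitive while neither $i$ nor $j$ is pivotal, contradicting Step 2 (your case analysis at the two possible corners checks out, e.g.\ $f(c)\neq f(d)$ and $f(a)\neq f(d)$ force $f(a)=f(c)$); once every relevant bit is pivotal everywhere, $f(x)=f(\mathbf{0})\oplus\bigoplus_{i\in R}x_i$ is immediate. Your argument is more local and avoids the component analysis, at the cost of not producing the paper's by-product — the structural fact, in line with its percolation viewpoint, that the level-set components of such an $f$ are subcubes. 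The $(i)\Rightarrow(ii)$ direction, the reduction via $s_{1-f}=s_f$, $b_{1-f}=b_f$, and the constant case as parity on the empty set are routine and handled correctly.
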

In particular, $b(f,\pi_{p}) = s(f,\pi_{p})$ implies that all distributional complexity measures for $f$ are equal. It would be interesting to obtain approximate versions of this result. However, note that for the MAJORITY function on $n$ bits, the distributional sensitivity and block sensitivity differ only by a multiplicative factor $\log(n)$ whereas the distributional sensitivity and witness complexity differ by a multiplicative factor $\sqrt{n}$ (see Example \ref{ex:majority-n}).
\begin{proof}
	The implication $(i) \implies (ii)$ being trivial, we only need to prove $(ii) \implies (i)$. Without loss of generality, fix a Boolean function $f$ that depends on every bit and let $n$ denote the number of bits. First, since $b_f(x) \ge s_f(x)$ for every $x \in \{0,1\}^n$, the equality in $(ii)$ implies that for every $x \in \{0,1\}^n$,
	\begin{equation}
		b_f(x) = s_f(x).
	\end{equation}
	We remind the reader that a Boolean function $f$ can naturally be viewed as a site percolation configuration on the hypercube $\{0,1\}^n$. Denote by $D(x)$ the connected component of $x$ and by $S_f(x)$ the set of pivotal bits for $f$ at $x$. Now, let us argue that for every $x \in \{0,1\}^n$, 
	\begin{equation}\label{eq:1-star}
		D(x) =  \{y \in \{0,1\}^n : y_i = x_i, \forall i \in S_f(x) \} =: \widetilde{D}(x).
	\end{equation}
	To this end, we fix any $x\in \{0,1\}^n$. Since $s_f(x) =b_f(x)$, we must have $f(x^B) = f(x)$ for every $B \subseteq [n]\setminus S_f(x)$, and so it follows that $ \widetilde{D}(x) \subseteq D(x)$. Note that this implies $S_f(y) \subseteq S_f(x)$ for every $y \in  \widetilde{D}(x)$. Towards a contradiction, assume that $j \in S_f(x) \setminus S_f(y)$. Then the block 
	\begin{equation}
		B := \{j\} \cup \{i \in [n] \setminus S_f(x): x_i \neq y_i\}
	\end{equation}
	satisfies $f(y) = f(x) \neq f(x^j) = f(y^B)$ and $B \cap S_f(y) = \emptyset$, which contradicts $b_f(y) = s_f(y)$. Thus, $S_f(y) = S_f(x)$ for every $y \in  \widetilde{D}(x)$ and so \eqref{eq:1} follows. In other words, we have established that every connected component is a subcube.
	
	To conclude from here, we assume without loss of generality that $f(0)=0$ and  define for every $I \subseteq S_f(0)$,
	\begin{equation}
		D_I := \{y \in \{0,1\}^n : y_i = 0, \forall  i \in S_f(x)\setminus I,\ \text{and}\ y_i =1, \forall i \in I\}.
	\end{equation}
	Note that $D_\emptyset = D(0)$ and that the graph distance between  $D_I$ and $D_J$ is equal to $\abs{I \triangle J}$. Using \eqref{eq:1-star}, it is now straightforward to check by induction on $\abs{I}$ that 
	\begin{equation}\label{eq:2-star}
		f\vert_{D_I} \equiv \abs{I} \mod 2, \quad \forall I \subseteq S_f(0).
	\end{equation}
	Hence, $f$ does not depend on the bits in $[n] \setminus S_f(0)$, which implies that $S_f(0) = [n]$. Combining this with \eqref{eq:2-star}, we conclude that $f$ is the PARITY function. 
\end{proof}

For monotone Boolean functions, the next proposition shows that asking for equality of distributional witness complexity and subcube partition complexity implies that the function is a dictator function on one of its bits. It would be interesting to know if equality of distributional witness complexity and local witness complexity is sufficient.
\begin{proposition}\label{prop:sc-w-equal}
	Let $f$ be a monotone, non-constant Boolean function and let $p \in (0,1)$. The following are equivalent: 
	\begin{itemize}
		\item[(i)] $f$ is a DICTATOR function (i.e.\ $f(x)=x_i$ for some $i\in [n]$),
		\item[(ii)] $sc(f,\pi_{p}) = w(f,\pi_{p})$.
	\end{itemize}
\end{proposition}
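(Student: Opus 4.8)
Here is the plan for the proof.

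The implication $(i)\Rightarrow(ii)$ is immediate: if $f(x)=x_i$ then $w_f\equiv 1$, so $w(f,\pi_p)=1$, while the subcube partition fixing only bit $i$ shows $sc(f,\pi_p)\le 1$; combined with $sc(f,\pi_p)\ge w(f,\pi_p)$ from Proposition~\ref{prop:hierarchy-dist}, this gives equality. So the content is $(ii)\Rightarrow(i)$. The first step is to turn the scalar equation $sc=w$ into a pointwise rigidity statement. Since there are only finitely many subcube partitions, fix one, $\mathcal C\sim f$, attaining $sc(f,\pi_p)$. As in the proof of Proposition~\ref{prop:hierarchy-det}, for every $x$ the fixed set $I_{C(x)}$ is a witness set for $f$ at $x$, so $c(\mathcal C,x)=\abs{I_{C(x)}}\ge w_f(x)$ holds pointwise. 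Since $\mathbb E_{x\sim\pi_p}[c(\mathcal C,x)]=sc(f,\pi_p)=w(f,\pi_p)=\mathbb E_{x\sim\pi_p}[w_f(x)]$ and $\pi_p$ has full support (as $p\in(0,1)$), this forces $\abs{I_{C(x)}}=w_f(x)$ for every $x\in\{0,1\}^n$: the fixed set of each subcube of $\mathcal C$ is a minimum-size witness set for every point it contains.

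The second step uses monotonicity to pin down the shape of these subcubes. Let $C\in\mathcal C$ be a subcube with $f\equiv 1$ on $C$. If some $j\in I_C$ were fixed to $0$, then by monotonicity $f$ would still be $\equiv 1$ on the subcube obtained from $C$ by additionally flipping bit $j$ to $1$, so $I_C\setminus\{j\}$ would already be a witness set for every $x\in C$, contradicting $\abs{I_C}=w_f(x)$. Hence $C$ fixes all its coordinates to $1$, i.e.\ $C=\{x:x_i=1\ \forall i\in I_C\}$, and the same minimality argument shows that no proper subset of $I_C$ is a $1$-certificate, so $I_C$ is a \emph{minterm} of $f$ (a minimal $S\subseteq[n]$ with $f(x)=1$ whenever $x_i=1$ for all $i\in S$). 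Symmetrically, every subcube of $\mathcal C$ with $f\equiv 0$ fixes all its coordinates to $0$, and its fixed set is a \emph{maxterm} of $f$.

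The third step deduces uniqueness and concludes. By the previous step the subcubes of $\mathcal C$ on which $f\equiv 1$ partition $f^{-1}(1)$ into sets $U_S:=\{x:x_i=1\ \forall i\in S\}$ with $S$ a minterm; but for distinct minterms $S_1,S_2$ the string with support $S_1\cup S_2$ lies in both $U_{S_1}$ and $U_{S_2}$, so these cannot be disjoint. Hence there is exactly one such subcube, with fixed set some minterm $M$, and $f^{-1}(1)=U_M$. Any minterm $S$ then has its support-string in $U_M$, i.e.\ $M\subseteq S$, and since $M$ is itself a $1$-certificate and $S$ is minimal, $S=M$; so $M$ is the only minterm, and monotonicity gives $f(x)=\bigwedge_{i\in M}x_i$. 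Dually $f$ has a unique maxterm; but the maxterms of $\bigwedge_{i\in M}x_i$ are exactly the singletons $\{i\}$, $i\in M$, so $\abs M=1$ and $f$ is a dictator.

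I expect the crux to be the combination of the first two steps: extracting the pointwise identity $\abs{I_{C(x)}}=w_f(x)$ from the single scalar equality $sc=w$, and then exploiting monotonicity to force each $1$-subcube to fix only ones (and dually each $0$-subcube only zeros). Once this structural rigidity is available, the uniqueness argument via the partition property and the final identification are short. Monotonicity is genuinely needed: by Example~\ref{ex:constant-function}, the (non-monotone) function $\textrm{A-EQ}_3$ satisfies $sc(\textrm{A-EQ}_3,\pi_p)=w(\textrm{A-EQ}_3,\pi_p)$ yet is not a dictator.
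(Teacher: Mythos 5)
Your proof is correct, and it shares the paper's one essential observation -- on a monotone function, a subcube with $f\equiv 0$ (resp.\ $\equiv 1$) that fixes some bit to $1$ (resp.\ $0$) has a redundant fixed coordinate, so its fixed set strictly exceeds the minimum witness size -- but you organize the argument quite differently. The paper proves the contrapositive directly: assuming $f$ is not a dictator, it takes an \emph{arbitrary} $\mathcal C\sim f$, notes that $\mathcal C$ must contain a subcube $C'$ other than the two containing $(0,\ldots,0)$ and $(1,\ldots,1)$, and observes that disjointness from $C_0$ forces $C'$ (say with $f\vert_{C'}\equiv 0$) to fix a bit to $1$, whence $c(\mathcal C,x)>w_f(x)$ on $C'$ and the expected gap is strict for every partition. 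You instead start from an optimal partition, upgrade the scalar equality $sc=w$ to the pointwise identity $\abs{I_{C(x)}}=w_f(x)$ via full support, and then push the redundancy argument further to a full structural classification: every $1$-subcube fixes only ones and is a minterm cube, every $0$-subcube fixes only zeros and is a maxterm cube, disjointness forces exactly one of each, so $f$ is simultaneously an AND and an OR and hence a dictator. Your route is longer but yields more information (it characterizes the optimal partitions and shows any monotone $f$ with $sc=w$ must have a single minterm and a single maxterm), while the paper's three-subcube argument is more economical and avoids both the optimality/rigidity step and the minterm--maxterm bookkeeping; all your individual steps (the rigidity extraction, the minterm/maxterm identification, the intersection argument for uniqueness, and the final $\abs{M}=1$ conclusion) check out, including the implicit use of non-constancy to guarantee both a $1$-subcube and a $0$-subcube exist.
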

\begin{proof}
	The implication $(i) \implies (ii)$ being trivial, we proceed with proving $(ii) \implies (i)$. Assume that $f$ is not a DICTATOR function. Fix an arbitrary subcube partition $\mathcal C \sim f$, and denote by $C_0$ (resp.\ $C_1$) the subcube containing $(0,\ldots,0)$ (resp.\ $(1,\ldots,1)$). Since $f$ is monotone and non-constant, we have $f\vert_{C_0} \equiv 0$  and  $f\vert_{C_1} \equiv 1$. Since $f$ is not a DICTATOR function, it is easy to see that $\mathcal C \neq \{C_0,C_1\}$. Now, let $C' \in \mathcal C$ be a subcube different from $C_0$ and $C_1$, and assume without loss of generality that $f\vert_{C'}\equiv 0$. Since $C'\cap C_0 = \emptyset$, the set $I_{C' }$ of bits fixed by $C'$ must contain some bit $i'$ that is fixed to be $1$. However, by monotonicity of $f$, the set $I_{C'}\setminus \{i'\}$ is a witness set for $f$ and $x \in C'$. This establishes that $sc(f,\pi_p) > w(f,\pi_p)$. 
\end{proof}
\begin{question}\label{question:monotone-subcube-algorithmic}
	Does $a(f,\pi_p)=sc(f,\pi_p)$ hold for every monotone Boolean function $f$ and for every $p \in (0,1)$?
\end{question}

The following standard example shows why monotonicity is needed in the above proposition. 
\begin{example} \label{ex:address}
	The ADDRESS function on $n=m + 2^m$ bits, defined by 
	\begin{equation}
		\textrm{ADDRESS}_m(x_1,\ldots,x_m,y_0,\ldots,y_{2^m-1}) = y_{\sum_{i=0}^{m-1} x_i 2^i},
	\end{equation}
	has distributional complexities
	\begin{align}
		&w(\textrm{ADDRESS}_m,\pi_p)  = a(\textrm{ADDRESS}_m,\pi_p) = m+1.
	\end{align}
\end{example}

\subsection{A geometric perspective on subcube partitions and algorithms} \label{subsec:subcube}

In this subsection, we follow a more geometric approach. One of the first things we want to understand is when a particular subcube partition arises from an algorithm. We make the following observation: If a subcube partition arises from an algorithm, then it must be the case that (1) there is some hyperplane orthogonal to one of the coordinate directions 
that splits the cube in half and  crosses no edge with endpoints belonging to the same subcube and (2) each of the two subcube partitions induced by this splitting arises from an algorithm. 
Therefore, Figure \ref{fig:geometric-illustrations-a} illustrates a subcube partition that does not arise from an algorithm.
\begin{figure}
	\centering
	\begin{minipage}{.4\textwidth}
		\centering
		\includegraphics[width=\textwidth]{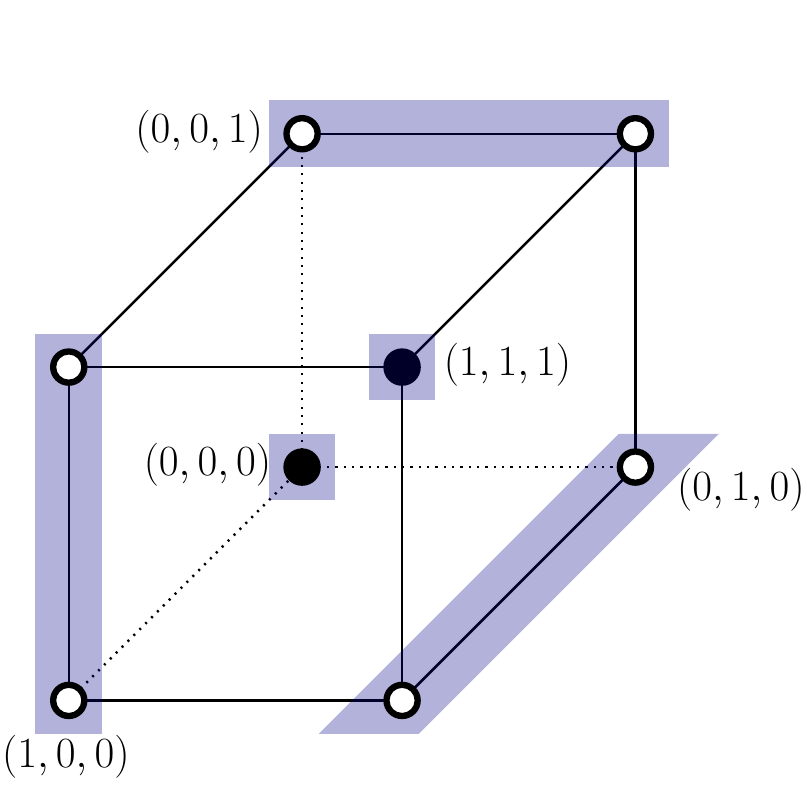}
		\subcaption{}\label{fig:geometric-illustrations-a}
	\end{minipage}%
	\hfill
	\begin{minipage}{.5\textwidth}
		\centering
		\includegraphics[width=\textwidth]{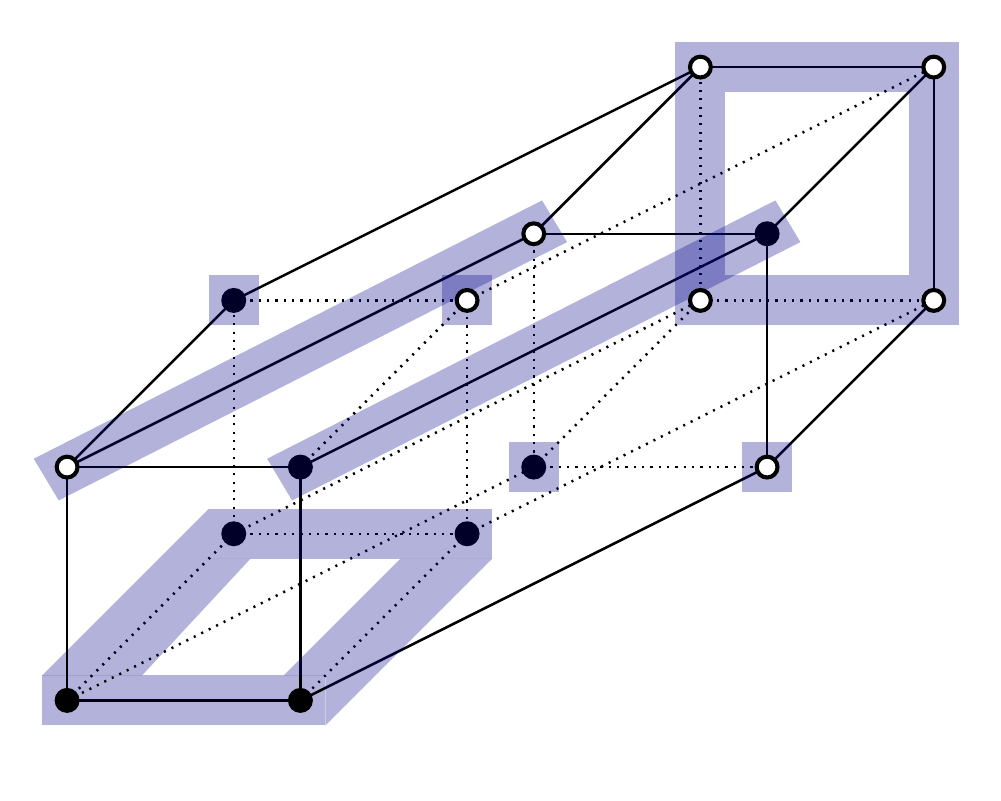}
		\subcaption{}\label{fig:geometric-illustrations-b}
	\end{minipage}
	\caption{Illustrations of (a) the ALL-EQUAL function on $n=3$ bits (see Example \ref{ex:constant-function}) and (b) the function $h$ on $n=4$ bits (see Equation \eqref{eq:function-seperating-sc-a}) by marking inputs with function value $1$ (resp.\ $0$) using a disk with black (resp.\ white) filling. In (b), the first three directions are as in (a) and additionally the three-dimensional cube in the front (resp.\ back) contains the inputs with $x_4 = 1$ (resp. $0$). 
		The illustrations also show optimal subcube partitions for the respective functions with respect to $\pi_{1/2}$.} 
\end{figure}

It follows that in order to show that $sc(f,\pi_p)<a(f,\pi_p)$ for a given Boolean function $f$, one needs to show that no optimal subcube partition with respect to $\pi_p$ determining $f$ arises from an algorithm. Since it is easy to verify that there are only two optimal subcube partitions for the ALL-EQUAL function on $n=3$ bits from Example \ref{ex:constant-function} (see Figure \ref{fig:geometric-illustrations-a} for a representation of the optimal subcube partition $\{(0,0,0),(1,0,\star),(\star,1,0),(0,\star,1),(1,1,1)\}$; the second one being essentially the same), we see that $sc(\textrm{A-EQ}_3,\pi_{p}))<a(\textrm{A-EQ}_3,\pi_{p})$.

We now point out an interesting difference between subcube partitions and algorithms for Boolean functions.
Any subcube partition of $\{0,1\}^n$ which determines a Boolean function $f$ induces in a natural way a
subcube partition of the subset $f^{-1}(1)$ and a subcube partition of the subset $f^{-1}(0)$. Moreover, it is clear that
when trying to find an optimal subcube partition for a Boolean function $f$ (either deterministically in worst  case or
distributionally with respect to some $\pi_p$), one can work separately on $f^{-1}(1)$ and $f^{-1}(0)$, finding
optimal solutions for the two parts separately and then just combining them. When we move to algorithms, while
it does make sense to talk about optimal algorithms on $f^{-1}(1)$ and $f^{-1}(0)$ separately, one cannot \emph{combine}
such algorithms into an algorithm for the whole cube in the same way.
(For distributional complexity, optimal algorithms on $f^{-1}(1)$ and $f^{-1}(0)$
means precisely finding algorithms which minimize the expected number of
queries made conditional on $f^{-1}(1)$ and $f^{-1}(0)$ respectively.)
The following interesting example on $4$ bits illustrates such an example.

Define the Boolean function $h$ on $n=4$ bits by
\begin{equation}\label{eq:function-seperating-sc-a}
	h(x_1,\ldots,x_4) = \begin{cases} 	1 &\textrm{if}\ (x_1,\ldots,x_4) \in  \{(\star,\star,0,1),(1,1,1,\star),(1,0,0,0),(0,0,1,1)\}, \\
		0 &\textrm{otherwise}. \\
	\end{cases}
\end{equation} 
We refer the reader to Figure \ref{fig:geometric-illustrations-b} for an illustration of $h$ and the unique optimal subcube partition with respect to $\pi_{1/2}$. We leave it to the reader to check  that $sc(h,\pi_{1/2})= 11/4$ and that $a(h,\pi_{1/2})= 3> 11/4$, the latter being obtained by a case-by-case analysis. However, if we condition on $h^{-1}(1)$ or $h^{-1}(0)$, one can check that the conditional subcube partition complexities are still $11/4$ but more interestingly, the optimal algorithms on the two pieces also achieve $11/4$.
On $h^{-1}(1)$, one sees this by taking the algorithm which first queries $x_3$, second $x_1$ (resp.\ $x_4$) if $x_3 =1$ (resp.\ $0$) and then continues in the obvious way. On $h^{-1}(0)$, one sees this by taking the same algorithm but with the roles of $x_1$ and $x_3$ being switched.
However, there is no meaningful way to combine these algorithms for the whole cube. Note that the optimal algorithm on $h^{-1}(1)$ above yields an (non-optimal) expected number of queries of $13/4$ on
$h^{-1}(0)$ and vice versa. 
In contrast, for the ALL-EQUAL function, on  $\textrm{A-EQ}_3^{-1}(0)$, the optimal subcube cube and algorithmic costs differ (but are equal on $\textrm{A-EQ}_3^{-1}(1)$).

\subsection{A stopping set perspective on subcube partitions and local sets}\label{subsec:stopping-set-perspective}

In the proof of Proposition \ref{prop:hierarchy-dist}, we have already seen that any subcube partition naturally induces a local set, and this allowed us to establish the comparison $sc(f,\pi_p) \ge \ell(f,\pi_p)$. Here, we develop a unifying perspective on subcube partitions and local sets. 

For the following discussion, we fix a general probability space $(\Omega,\F, \P)$ on which the bits $x \sim \pi_p$ are defined. A collection $\mathbb F := (\F_J)_{J\subseteq [n]}$ of sub-$\sigma$-algebras of $\F$ is a \emph{filtration} if $\F_{J_1} \subseteq \F_{J_2}$ for all $J_1 \subseteq J_2$, and a random set $\mathcal I$ is a \emph{stopping set with respect to $\mathbb F$} if $\{\mathcal I = J \} \in \F_J$ for every fixed $J\subseteq [n]$. In addition, we call $x$ an \emph{independent field with respect to $\mathbb F$} if for every fixed $J \subseteq [n]$, 
\begin{equation}
	\sigma\left(x_i : {i\in J}\right) \subseteq \mathcal F_J,\ \text{and}\  \sigma\left(x_i: {i\notin J}\right) \text{is independent of}\ \F_J.
\end{equation}

\begin{proposition}\label{prop:stopping-set-local-set}
	Fix $(\Omega,\F, \P)$ on which the bits $x \sim \pi_p$ are defined. For a random set $\mathcal I \subseteq [n]$, the following are equivalent:
	\begin{enumerate}
		\item[(i)] There exists a filtration $\mathbb F$ such that $\mathcal I$ is a stopping set with respect to $\mathbb F$  and $x$ is an independent field with respect to $\mathbb F$,
		\item[(ii)] $\mathcal{I}$ is local.
	\end{enumerate}
\end{proposition}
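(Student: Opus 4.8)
The plan is to prove the two implications separately, with the easy direction being $(i)\implies(ii)$ and the substantive content being the construction of a witnessing filtration for $(ii)\implies(i)$.

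For $(i)\implies(ii)$, suppose $\mathbb F=(\F_J)_{J\subseteq[n]}$ is a filtration with respect to which $\mathcal I$ is a stopping set and $x$ is an independent field. Fix $J\subseteq[n]$. Since $\mathcal I$ is a stopping set, $\{\mathcal I=J\}\in\F_J$, and since $x$ is an independent field with respect to $\mathbb F$, we have $\sigma(x_i:i\in J)\subseteq\F_J$; hence $\sigma(\{\mathcal I=J\},(x_i:i\in J))\subseteq\F_J$. But again because $x$ is an independent field, $\sigma(x_i:i\notin J)$ is independent of $\F_J$, so it is in particular independent of the smaller $\sigma$-algebra $\sigma(\{\mathcal I=J\},(x_i:i\in J))$. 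This is exactly the definition of $\mathcal I$ being local, so $(ii)$ holds. This direction is essentially bookkeeping with $\sigma$-algebras and I expect no obstacle.

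For $(ii)\implies(i)$, the task is to exhibit a concrete filtration. The natural candidate is, for each $J\subseteq[n]$,
\begin{equation}
	\F_J := \sigma\Big(\big(x_i : i\in J\big),\ \big\{\mathcal I = K\big\} \text{ for } K\subseteq J,\ \text{and more generally } \mathbbm 1_{\{\mathcal I\subseteq J\}}\cdot h(\mathcal I, (x_i)_{i\in J})\Big),
\end{equation}
but a cleaner formulation is to let $\F_J$ be generated by $(x_i:i\in J)$ together with all events of the form $\{\mathcal I=K\}$ with $K\subseteq J$. One then has to check three things: (a) $\mathbb F$ is a filtration, i.e.\ $\F_{J_1}\subseteq\F_{J_2}$ when $J_1\subseteq J_2$, which is immediate since each generator of $\F_{J_1}$ is a generator of $\F_{J_2}$; (b) $\mathcal I$ is a stopping set, i.e.\ $\{\mathcal I=J\}\in\F_J$, which holds by construction since $J\subseteq J$; and (c) $x$ is an independent field, i.e.\ $\sigma(x_i:i\in J)\subseteq\F_J$ (immediate) and $\sigma(x_i:i\notin J)$ is independent of $\F_J$.

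The one genuinely nontrivial point is (c), the independence claim, and I expect this to be the main obstacle — though a mild one. The subtlety is that $\F_J$ is generated by events $\{\mathcal I=K\}$ for all $K\subseteq J$ simultaneously, whereas the definition of local gives independence only when we condition on a single event $\{\mathcal I=K\}$ together with $(x_i)_{i\in K}$. To bridge this, I would argue as follows: it suffices to check independence on a generating $\pi$-system of $\F_J$, namely events of the form $A\cap\{\mathcal I=K\}$ where $K\subseteq J$ and $A\in\sigma(x_i:i\in J)$ (one must also handle $\{\mathcal I\not\subseteq J\}\in\F_J$, but this is the disjoint union of the remaining cases, so it reduces to the others). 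For such an event and any $B\in\sigma(x_i:i\notin J)$, since $\mathcal I$ is local we know $\sigma(\{\mathcal I=K\},(x_i)_{i\in K})$ is independent of $\sigma(x_i:i\notin K)$; because $K\subseteq J$, both $A$ (after writing it in terms of $(x_i)_{i\in K}$ and $(x_i)_{i\in J\setminus K}$) and $B$ lie in $\sigma(x_i:i\notin K)$, and the extra coordinates $(x_i)_{i\in J\setminus K}$ remain i.i.d.\ $\mathrm{Ber}(p)$ and independent of $(x_i)_{i\notin J}$ conditional on $\{\mathcal I=K\}$ and $(x_i)_{i\in K}$, precisely by the locality hypothesis. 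Factoring the probability $\P[A\cap\{\mathcal I=K\}\cap B]$ accordingly and summing over $K\subseteq J$ then yields $\P[\,\cdot\cap B]=\P[\,\cdot\,]\P[B]$ for every element of the $\pi$-system, and Dynkin's lemma upgrades this to independence of the full $\sigma$-algebras. This completes $(ii)\implies(i)$ and the proof.
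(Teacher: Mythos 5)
Your proposal is correct and follows essentially the same route as the paper: the same filtration $\F_J=\sigma\bigl((\{\mathcal I=K\}:K\subseteq J),(x_i:i\in J)\bigr)$, with the independence check reduced via locality to factorizations of probabilities of events of the form $\{\mathcal I=K\}\cap\{x\vert_K=y\vert_K\}$ against events depending on the remaining bits, and the $\{\mathcal I\not\subseteq J\}$ piece handled by complementation. The only cosmetic difference is that you verify independence on a generating $\pi$-system and invoke Dynkin's lemma, whereas the paper decomposes an arbitrary $A\in\F_J$ directly into the corresponding disjoint pieces; these amount to the same computation.
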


\begin{proof}
	The implication $(i) \implies (ii)$ is straightforward from the definitions. To establish the converse, we define the filtration $\mathbb F := (\F_J)_{J\subseteq [n]}$ by 
	\begin{equation}
		\F_J := \sigma\left((\{\mathcal I = J'\}: J' \subseteq J),(x_i : i \in J)\right). 
	\end{equation}
	$\mathcal{I}$ is clearly a stopping set with respect to $\mathbb F$, and we are left with showing that $x$ is an independent field with respect to $\mathbb F$. First, $\sigma(x_i : i \in J) \subseteq \F_J$ follows from the definition. Second, we fix any $A \in \F_J$ and $B \in \sigma(x_i: {i\notin J})$ and, using locality, aim to show $\mathbb P[A \cap B] = \mathbb P[A]\cdot \mathbb P[B]$. It is possible to choose, for $J' \subseteq J$, sets $\mathcal Y(J',A) \subseteq \{0,1\}^J$ and $\mathcal{Y}^\ast(A) \subseteq \{0,1\}^J$ such that
	\begin{equation}
		A = \left(\bigsqcup_{J' \subseteq J}\ \bigsqcup_{y\vert_J \in \mathcal{Y}(J',A)} \{\mathcal{I} = J'\} \cap \{x\vert_J = y\vert_J\}\right) \sqcup
		\left(\bigsqcup_{y\vert_J \in \mathcal{Y}^\ast(A)} \{\mathcal{I} \not\subseteq J\} \cap \{x\vert_J = y\vert_J\}\right),
	\end{equation}
	and $\mathcal Y(B) \subseteq \{0,1\}^{J^\text{c}}$ such that
	\begin{equation}
		B = \bigsqcup_{y\vert_{J^{\text{c}}} \in \mathcal{Y}(B)} \left\{x\vert_{J^{\text{c}}} = y\vert_{J^{\text{c}}}\right\}.
	\end{equation}
	Using these partitions of $A$ and $B$, we get
	\begin{align}
		\P[A\cap B] &= \sum_{J':J' \subseteq J}\ \sum_{y\vert_J \in \mathcal{Y}(J',A)}\ \sum_{y\vert_{J^{\text{c}}} \in \mathcal{Y}(B)} \P\left[\{\mathcal I = J'\} \cap \{x = y\}\right] \\ 
		&+ 
		\sum_{y\vert_J \in \mathcal{Y}^\ast(A)}\ \sum_{y\vert_{J^{\text{c}}} \in \mathcal{Y}(B)} \P\left[\{\mathcal I \not\subseteq J\} \cap \{x = y\}\right]. \label{eq:prop-local-1}
	\end{align}
	Now, for every $J' \subseteq J$ and $y \in \{0,1\}^n$, 
	\begin{align}
		&\P\left[\{\mathcal I = J'\} \cap \{x = y\}\right] = \P\left[\{\mathcal I = J'\} \cap \{x\vert_{J'} = y\vert_{J'}\}\right] \cdot\P\left[x\vert_{(J')^\text{c}} = y\vert_{(J')^\text{c}}\right] \\
		&= \P\left[\{\mathcal I = J'\} \cap \{x\vert_{J'} = y\vert_{J'}\}\right] \cdot\P\left[x\vert_{J\setminus J'} = y\vert_{J\setminus J'}\right] \cdot\P\left[x\vert_{J^\text{c}} = y\vert_{J^\text{c}}\right] \\
		&= \P\left[\{\mathcal I = J'\} \cap \{x\vert_{J} = y\vert_{J}\}\right] \cdot\P\left[x\vert_{J^\text{c}} = y\vert_{J^\text{c}}\right],
	\end{align}
	where we used in the first and third equality that $\mathcal{I}$ is local. Using the above, one can also show that $\P[\{\mathcal I \not\subseteq J\} \cap \{x = y\}] = \P[\{\mathcal I \not\subseteq J\} \cap \{x\vert_J = y\vert_J\}]  \cdot\P\left[x\vert_{J^\text{c}} = y\vert_{J^\text{c}}\right]$. Plugging these back into \eqref{eq:prop-local-1}, one deduces that $\P[A\cap B] = \P[A]\cdot \P[B]$ and this concludes the proof.
\end{proof}
The following simple proposition is left to the reader.
\begin{proposition}\label{prop:subcube-stopping-sets}
	There is a natural bijection between subcube partitions and stopping sets with respect to the canonical filtration $(\sigma(x_i : i \in J))_{J \subseteq [n]}$.
\end{proposition}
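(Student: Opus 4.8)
The plan is to exhibit the bijection explicitly and then verify both directions together with the fact that the two maps are mutually inverse.

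\emph{Forward map.} Given a subcube partition $\mathcal{C}$, I would associate the random set $\mathcal{I}_{\mathcal{C}}$ defined by $\mathcal{I}_{\mathcal{C}}(x) = I_{C(x)}$, exactly as in the proof of Proposition \ref{prop:hierarchy-dist}. To see that $\mathcal{I}_{\mathcal{C}}$ is a stopping set with respect to the canonical filtration $(\sigma(x_i : i \in J))_{J \subseteq [n]}$, fix $J \subseteq [n]$ and note that $\{\mathcal{I}_{\mathcal{C}} = J\} = \bigsqcup_{C \in \mathcal{C}\,:\, I_C = J} C$; since each such subcube $C$ is determined by the values $(x_i : i \in J)$, this event lies in $\sigma(x_i : i \in J)$.

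\emph{Reverse map.} Given a stopping set $\mathcal{I}$ with respect to the canonical filtration, one has $\{\mathcal{I} = J\} \in \sigma(x_i : i \in J)$ for every $J$, so there is a set $S_J \subseteq \{0,1\}^J$ with $\{\mathcal{I} = J\} = \bigsqcup_{y \in S_J} \{x : x\vert_J = y\}$. I would then define $\mathcal{C}(\mathcal{I}) := \{\, \{x : x\vert_J = y\} : J \subseteq [n],\ y \in S_J \,\}$. Each element is a subcube fixing precisely the coordinates in $J$. These subcubes are pairwise disjoint (two arising from the same $J$ because the cylinders indexed by $S_J$ are disjoint; two arising from different $J, J'$ because they sit inside the disjoint events $\{\mathcal{I} = J\}$ and $\{\mathcal{I} = J'\}$), and they cover $\{0,1\}^n$ because the events $\{\mathcal{I} = J\}$, $J \subseteq [n]$, partition $\{0,1\}^n$ and each one is covered by its own cylinders. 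Hence $\mathcal{C}(\mathcal{I})$ is a subcube partition.

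\emph{Mutual inverses.} For a subcube partition $\mathcal{C}$, the subcubes of $\mathcal{C}(\mathcal{I}_{\mathcal{C}})$ are exactly the members of $\mathcal{C}$, grouped according to the value of $I_C$, so $\mathcal{C}(\mathcal{I}_{\mathcal{C}}) = \mathcal{C}$. Conversely, for a stopping set $\mathcal{I}$, if $x$ lies in the subcube $\{x' : x'\vert_J = x\vert_J\}$ of $\mathcal{C}(\mathcal{I})$, then by construction $x \in \{\mathcal{I} = J\}$, whence $\mathcal{I}_{\mathcal{C}(\mathcal{I})}(x) = J = \mathcal{I}(x)$. Thus the two maps are mutually inverse, giving the claimed bijection. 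The argument is entirely bookkeeping; the only point requiring a moment's care is the decomposition of a set in $\sigma(x_i : i \in J)$ into cylinders fixing exactly the coordinates in $J$, together with checking that the union over all $J$ of these cylinder collections is simultaneously disjoint and covering.
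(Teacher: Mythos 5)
Your proof is correct and is exactly the intended argument: the paper leaves this proposition to the reader, and your forward map is the same $\mathcal{I}_{\mathcal{C}}(x)=I_{C(x)}$ already used in the proof of Proposition \ref{prop:hierarchy-dist}, while the reverse map decomposes each event $\{\mathcal{I}=J\}\in\sigma(x_i : i\in J)$ into the cylinders fixing exactly the coordinates in $J$. The verification of disjointness, covering, and mutual inversion is the straightforward bookkeeping the paper has in mind, so there is nothing missing.
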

Note that $x$ is an independent field with respect to the canonical filtration since the bits are independent. The two previous propositions shed light on the relation between subcube partitions and local sets by viewing the two concepts from the unifying perspective of stopping sets and independent fields. 

We conclude this subsection with a brief discussion of randomized subcube partitions. We recall for a subcube partition $\mathcal C$, the random set $\mathcal{I}_\mathcal{C}$, defined by $\mathcal{I}_\mathcal{C}(x)=I_{C(x)}$, is a stopping set with respect to the canonical filtration.
A randomized subcube partition is a probability measure $\mu$ on the space of subcube partitions. If $G \sim \mu$ is independent of $x$, we observe that $\mathcal I_G$ is a stopping set with respect to the filtration
\begin{equation}
	\left(\sigma\left((x_i:i \in J),G\right)\right)_{J \subseteq [n]},
\end{equation}
and $x$ is an independent field with respect to this filtration. Thus, $\mathcal{I}_G$ is a local set by Proposition \ref{prop:stopping-set-local-set}. This particular local set has a ``product form'' and Example \ref{ex:separation-local-subcube} shows that \emph{not} every local set can be constructed in this way since, in this case, the expected size of the local set is strictly smaller than the distributional subcube partition complexity. 

Going the other way around, let $\mathcal G$ be a $\sigma$-algebra that is independent of $\sigma(x_i: i\in [n])$. If we are now given a stopping set $\mathcal I$ with respect to the filtration 
\begin{equation}
	\left(\sigma\left((x_i:i \in J),\mathcal G\right)\right)_{J \subseteq [n]},
\end{equation}	
then we have that $x$ is an independent field with respect to this filtration and $\mathcal I$ naturally gives rise to a randomized subcube partition.

\section{Boolean function composition}\label{sec:composition}

In this section, we analyze the behavior of complexity measures under compositions of Boolean functions. This will be very useful to bound or even compute  complexities of more complicated Boolean functions which are constructed using compositions.
For two Boolean functions $f$, $g$ on $n$ respectively $m$ bits, the \emph{composition} $f \circ g$ is the Boolean function on $n\cdot m$ bits  defined by
\begin{equation*}
	f\circ g (x^1_{1},\ldots,x^1_{m},\ldots,x^n_{1},\ldots,x^n_{m}) = f\left(g(x^1_{1},\ldots,x^1_{m}),\ldots,g(x^n_{1},\ldots,x^n_{m})\right).
\end{equation*} 
\subsection{Review of the deterministic case}\label{subsec:composition-det}

Boolean function composition is well-understood with respect to deterministic complexity measures.
\begin{proposition}\label{prop:composition-det}
	For all Boolean functions  $f$ and $g$,
	\begin{enumerate}
		\item[(i)] $s_D(f \circ g) \le s_D(f) \cdot s_D(g)$,
		\item[(ii)] $w_D(f \circ g) \le w_D(f) \cdot w_D(g)$,
		\item[(iii)] $sc_D(f \circ g) \le sc_D(f) \cdot sc_D(g)$,
		\item[(iv)] $a_D(f \circ g) = a_D(f) \cdot a_D(g)$.
	\end{enumerate}
\end{proposition}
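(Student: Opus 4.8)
The plan is to view the input of $f\circ g$ as $n$ blocks $x^1,\dots,x^n\in\{0,1\}^m$, to set $y_j:=g(x^j)$ and $y:=(y_1,\dots,y_n)$ so that $(f\circ g)(x)=f(y)$, and to obtain (i), (ii), (iii) and the upper bound in (iv) by the common recipe of plugging an optimal structure for $g$ into each coordinate that matters for an optimal structure for $f$. Only the lower bound in (iv) requires a genuinely different idea, an adversary argument, and that is the step I expect to be the main obstacle.

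For (i), fix $x$ and observe that flipping bit $i$ of block $j$ changes $(f\circ g)(x)$ only if it flips $y_j$, i.e.\ $i$ is pivotal for $g$ at $x^j$, \emph{and} flipping $y_j$ changes $f(y)$, i.e.\ $j$ is pivotal for $f$ at $y$; hence the pivotal bits of $f\circ g$ at $x$ are contained in the union, over the at most $s_f(y)\le s_D(f)$ coordinates $j$ pivotal for $f$ at $y$, of sets of size $s_g(x^j)\le s_D(g)$, giving $s_{f\circ g}(x)\le s_D(f)s_D(g)$; then take the maximum over $x$. For (ii), fix $x$, pick a minimum witness set $W$ for $f$ at $y$ and, for each $j\in W$, a minimum witness set $W_j$ for $g$ at $x^j$; then $\{(j,i):j\in W,\ i\in W_j\}$ is a witness set for $f\circ g$ at $x$ --- revealing it determines $y_j$ for every $j\in W$ and hence $f(y)$ --- of size at most $w_D(f)w_D(g)$; again take the maximum over $x$.

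For (iii), fix optimal subcube partitions $\mathcal C\sim f$ and $\mathcal D\sim g$. Given $x$, compute $y_j=g(x^j)$ and $C(y)\in\mathcal C$, and let $E(x)$ be the subcube that imposes the constraints of $D(x^j)$ on every block $j\in I_{C(y)}$ and leaves every other block entirely unfixed. The routine checks are that $\mathcal E:=\{E(x):x\in\{0,1\}^{nm}\}$ is an honest partition --- if $x'\in E(x)$ then $y'_j=y_j$ for all $j\in I_{C(y)}$, so $C(y')=C(y)$ and $D((x')^j)=D(x^j)$ on those blocks, whence $E(x')=E(x)$ --- and that $\mathcal E\sim f\circ g$, since on $E(x)$ the fixed blocks force $y|_{I_{C(y)}}$ and then $C(y)\sim f$ forces $f(y)$. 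Finally $c(\mathcal E,x)=\sum_{j\in I_{C(y)}}c(\mathcal D,x^j)\le |I_{C(y)}|\cdot sc_D(g)=c(\mathcal C,y)\cdot sc_D(g)\le sc_D(f)sc_D(g)$; take the maximum over $x$.

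For (iv), the upper bound is the composition of decision trees: run an optimal algorithm $A_f$ for $f$, and whenever it asks for coordinate $j$, run an optimal algorithm $A_g$ for $g$ on block $j$ (at most $a_D(g)$ queries) to supply $y_j$; since $A_f$ reads at most $a_D(f)$ coordinates before $f$ is determined, at most $a_D(f)a_D(g)$ bit-queries suffice. For the matching lower bound I would argue by adversary. Fix an adversary $\mathcal A_g$ keeping $g$'s value ambiguous through any $a_D(g)-1$ queries, and $\mathcal A_f$ keeping $f$'s value ambiguous through any $a_D(f)-1$ queries. Against any algorithm for $f\circ g$, run an independent copy of $\mathcal A_g$ on each block; call block $j$ \emph{resolved} the instant it receives its $a_D(g)$-th query, and at that moment let $\mathcal A_f$ answer a query to coordinate $j$, obtaining a bit $b_j$, and commit block $j$ to any completion consistent with the $a_D(g)-1$ answers already issued and with $g=b_j$ --- which exists exactly because $\mathcal A_g$ had kept both values alive. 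As long as fewer than $a_D(f)$ blocks are resolved, $\mathcal A_f$ supplies two completions of $y$, made realizable by filling unresolved blocks arbitrarily, with opposite values of $f$, so $f\circ g$ is not yet determined; hence the algorithm must resolve at least $a_D(f)$ blocks, and since distinct blocks are disjoint and each resolved block absorbed at least $a_D(g)$ queries, it makes at least $a_D(f)a_D(g)$ queries overall. The delicate points --- which I expect to be the crux --- are that the ``meta-algorithm'' issuing a query to coordinate $j$ exactly when block $j$ becomes resolved is a legitimate adaptive algorithm for $f$ (so that $\mathcal A_f$ is entitled to act against it, and never queries a coordinate twice), and that committing block $j$ to the externally dictated value $b_j$ never clashes with answers already given; both follow from the defining ambiguity-preserving property of the single-function adversaries.
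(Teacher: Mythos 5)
Your proposal is correct, and since the paper itself does not prove Proposition \ref{prop:composition-det} (it defers to \cite{Tal2013} for (i), (ii), (iv) and to \cite{Kothari2015} for (iii)), the right comparison is with those standard arguments and with the paper's distributional analogue: your (i) and (ii) are the usual pointwise bounds, your construction in (iii) is exactly the deterministic counterpart of the composed partition $\mathcal C_f \circ \mathcal C_g$ built in the proof of Proposition \ref{prop:composition-distributional}(iii), and your upper bound in (iv) is the same composed decision tree used there. The adversary argument for the lower bound in (iv) is the standard one, and your bookkeeping is sound: a block is resolved only at its $a_D(g)$-th query, an unresolved block has received at most $a_D(g)-1$ answers and hence can still realize either value of $g$, so with fewer than $a_D(f)$ resolved blocks both values of $f\circ g$ remain consistent with all answers given; at the first moment of determination there are therefore at least $a_D(f)$ resolved blocks, each having absorbed at least $a_D(g)$ queries, giving the count $a_D(f)\cdot a_D(g)$.

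One step should be made explicit rather than absorbed into ``the defining ambiguity-preserving property'': the existence, for a Boolean function $h$, of a \emph{single} adversary strategy that keeps $h$ undetermined through the first $a_D(h)-1$ queries against \emph{every} adaptive querier, not merely against a fixed algorithm known in advance. This matters precisely because the ``meta-algorithm'' that $\mathcal{A}_f$ faces is only defined once $\mathcal{A}_f$ and the copies of $\mathcal{A}_g$ are fixed, so quantifying over a known algorithm would be circular. The fact is true and standard: the query--answer game is a finite perfect-information zero-sum game, so by backward induction the adversary has an optimal strategy forcing the determination time to be at least $\min_A \max_x c_h(A,x) = a_D(h)$, and any adaptively generated sequence of distinct queries coincides with the play of some deterministic algorithm strategy against this adversary, so the guarantee holds pathwise. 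With that one-line justification added (and the trivial remark that (iv) is immediate when $f$ or $g$ is constant), your proof is complete.
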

We refer to \cite[Lemma 3.1]{Tal2013} for a proof of (i), (ii) and (iv), and to \cite[Prop.\ 3]{Kothari2015} for a proof of (iii). 
The reader will immediately notice that block sensitivity is missing in the above proposition. While one might guess that $b_D(f\circ g) \le b_D(f) \cdot b_D(g)$, it was observed in \cite[Example 5.8]{Tal2013} that this inequality is false. The following simple example shows that the first two inequalities can be strict. In the case of subcube partition complexity, we are not aware of an example showing that the inequality in (iii) can be strict.

\begin{example}\label{ex:tribes-function}
	The OR-function on $n$ bits is defined by
	\begin{equation}
		\textrm{OR}_n (x_1,\ldots,x_n) = 
		\begin{cases} 	1 &\textrm{if}\  x_i = 1 \ \textrm{for some}\ i \in [n], \\
			0 &\textrm{otherwise}. \\
		\end{cases}
	\end{equation}The AND-function on $n$ bits is defined by
	\begin{equation}
		\textrm{AND}_n (x_1,\ldots,x_n) = 
		\begin{cases} 	1 &\textrm{if}\  x_i = 1 \ \textrm{for all}\ i \in [n], \\
			0 &\textrm{otherwise}. \\
		\end{cases}
	\end{equation}
	It is easy to see that all deterministic complexity measures of $\textrm{OR}_n$ and of $\textrm{AND}_n$ are equal to $n$. 
	Now, let $f$ be the OR-function on $\ell$ bits and let $g$ be the AND-function on $m$ bits. 
	Then $f\circ g$ is the well-known TRIBES-function with $\ell$ tribes of size $m$ on $n=\ell m$ bits. 
	The deterministic complexity measures are given by
	\begin{equation}\label{ex:composition-det}
		a_D(f\circ g) = sc_D(f\circ g) = n	\ \textrm{and} \ s_D(f\circ g) = b_D(f\circ g) = w_D(f\circ g) = \max\{\ell,m\}.
	\end{equation}
	These are easily justified, except perhaps $sc_D(f\circ g)$ which can be argued as follows: By \cite[Lemma 1]{Rivest1975}, for any Boolean function $h$, if $\{x:h(x)=1\}$ is odd, then the deterministic subcube partition complexity equals the number of bits. For the TRIBES function $f \circ g$ as above, it is easy to see that 
	\begin{equation}
		\abs{\{x : f \circ g(x) = 1\}} = \sum_{k=1}^\ell (-1)^{k+1} \binom{\ell}{k} \left(2^m\right)^{\ell - k},
	\end{equation}
	which is odd since the summands corresponding to $k=1,\ldots,\ell -1$ are even and the summand corresponding to $k=\ell$ is odd.
\end{example}

\subsection{Composition results for distributional complexities}\label{subsec:composition-dist}

The behavior of distributional complexity measures under Boolean function composition differs considerably from the deterministic case as the next result summarizes. 

\begin{proposition}\label{prop:composition-distributional}
	Set $g(p) := \P_{x \sim \pi_p}[g(x)=1]$. Then for all Boolean functions  $f$ and $g$,
	\begin{enumerate}
		\item[(i)] 	$s(f\circ g,\pi_p) = s(f,\pi_{g(p)}) \cdot s(g,\pi_p)$,
		\item[(ii)] $	\ell(f\circ g,\pi_p) \le \ell(f,\pi_{g(p)}) \cdot \ell(g,\pi_p)$,
		\item[(iii)] $	sc(f\circ g,\pi_p) \le sc(f,\pi_{g(p)}) \cdot sc(g,\pi_p)$,
		\item[(iv)] $	a(f\circ g,\pi_p) \le a(f,\pi_{g(p)}) \cdot a(g,\pi_p)$.
	\end{enumerate}
\end{proposition}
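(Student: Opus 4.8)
The plan is to reduce all four parts to a single structural fact. Write $x=(x^1,\dots,x^n)$ for the $nm$ input bits grouped into $n$ blocks of $m$, and set $y=y(x):=(g(x^1),\dots,g(x^n))\in\{0,1\}^n$. When $x\sim\pi_p$, the blocks $x^1,\dots,x^n$ are independent and $y\sim\pi_{g(p)}$; hence anything happening strictly inside block $j$ — the quantity $s_g(x^j)$, the subcube of $\mathcal C_g$ through $x^j$, the run of an algorithm for $g$ on $x^j$, a local witness set for $g$ applied to $x^j$ — is a function of $x^j$ alone and is independent across blocks and of $(y_k)_{k\neq j}$. The recurring point is that whether ``block $j$ is used'' by the high-level object for $f$ on $y$ is an event that does not depend on $y_j$, hence is independent of the contents of block $j$.

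For (i), I would use the pointwise identity $s_{f\circ g}(x)=\sum_{j=1}^n\mathbbm{1}[\,j\text{ is pivotal for }f\text{ at }y\,]\cdot s_g(x^j)$, which holds because flipping a bit inside block $j$ changes $(f\circ g)(x)$ exactly when that bit is pivotal for $g$ at $x^j$ (so the flip toggles $y_j$) and coordinate $j$ is pivotal for $f$ at $y$. Pivotality of $j$ for $f$ at $y$ is invariant under flipping $y_j$, so the indicator depends only on $(y_k)_{k\neq j}$ and is independent of $s_g(x^j)$. Taking expectations and using $y\sim\pi_{g(p)}$ gives $s(f\circ g,\pi_p)=\sum_j\P_{y\sim\pi_{g(p)}}[\,j\text{ pivotal for }f\,]\cdot s(g,\pi_p)=s(f,\pi_{g(p)})\cdot s(g,\pi_p)$, an equality.

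For (iii) and (iv) I would exhibit the obvious composed objects. Given an optimal subcube partition $\mathcal C_f$ (w.r.t.\ $\pi_{g(p)}$) and an optimal $\mathcal C_g$ (w.r.t.\ $\pi_p$), form the subcube partition of $\{0,1\}^{nm}$ that, on input $x$, leaves block $k$ entirely free when $k\notin I_{C_f(y)}$ and otherwise fixes inside block $j$ exactly the bits fixed by $C_g(x^j)$; it determines $f\circ g$ because the bits fixed in the fixed blocks already pin down $y$ on $I_{C_f(y)}$. Given optimal algorithms $A_f,A_g$, run $A_f$ and, each time it wants coordinate $j$, run $A_g$ on the as-yet-untouched block $j$ to produce $y_j$. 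In both cases the cost on input $x$ is $\sum_{j\in T}(\text{cost inside block }j)$, with $T=I_{C_f(y)}$ resp.\ $T=$ the set of coordinates queried by $A_f$ on $y$; and $\{j\in T\}$ is independent of $y_j$: for subcube partitions because flipping $y_j$ cannot change whether the subcube of $\mathcal C_f$ through $y$ fixes coordinate $j$ (otherwise $y$ and $y^j$ would lie in a common subcube, forcing $j$ unfixed for both), and for algorithms because once $A_f$ queries $j$ it has reached that query using only values of coordinates queried strictly before $j$. Taking expectations (with $y\sim\pi_{g(p)}$) yields $sc(f\circ g,\pi_p)\le sc(f,\pi_{g(p)})\,sc(g,\pi_p)$ and $a(f\circ g,\pi_p)\le a(f,\pi_{g(p)})\,a(g,\pi_p)$; these are inequalities because the composed object need not be optimal.

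For (ii) the construction is analogous, but must be carried out on a product space since local witness sets need not be functions of the bits: take an optimal local witness set $\mathcal I_f$ for $f$ (w.r.t.\ $y\sim\pi_{g(p)}$) and, conditionally independently over $j$, optimal local witness sets $\mathcal I_g^{(j)}$ for $g$ on the blocks (driven by $x^j$ and fresh independent randomness), set $y_j:=g(x^j)$, and put $\mathcal I:=\bigcup_{j\in\mathcal I_f}\{j\}\times\mathcal I_g^{(j)}$. Assuming $f,g$ are non-constant (the constant case is trivial), every $\mathcal I_g^{(j)}$ is a.s.\ non-empty, so $\mathcal I_f$ is recovered from $\mathcal I$ as the set of first coordinates occurring in it and $\mathcal I\sim f\circ g$ follows by decoding block by block. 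For the size bound, the analogue of ``does not depend on $y_j$'' is the lemma that for any local set $\mathcal I_f$ the event $\{j\in\mathcal I_f\}$ is independent of $y_j$: indeed, for each $J\not\ni j$ locality gives $\{\mathcal I_f=J\}\perp y_j$, and one sums over $J\not\ni j$. Together with $y_j=g(x^j)$ and block independence, this gives $\E\bigl[\mathbbm{1}[j\in\mathcal I_f]\cdot|\mathcal I_g^{(j)}|\bigr]=\P[j\in\mathcal I_f]\cdot\ell(g,\pi_p)$, whence $\E|\mathcal I|=\ell(f,\pi_{g(p)})\,\ell(g,\pi_p)$. I expect the step requiring the most care — the main obstacle — to be verifying that $\mathcal I$ is \emph{local}: I would establish this via the stopping-set characterization of Proposition \ref{prop:stopping-set-local-set}, assembling from filtrations witnessing locality of $\mathcal I_f$ and of the $\mathcal I_g^{(j)}$ a filtration on $\{0,1\}^{nm}$ (together with all the auxiliary randomness) under which $\mathcal I$ is a stopping set and $x$ is an independent field.
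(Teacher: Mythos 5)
Your proposal matches the paper's proof essentially step for step: the same pointwise pivotality identity for (i), the same composed local witness set, composed subcube partition, and composed algorithm for (ii)--(iv), and the same key independence observation in each case (whether a block is ``used'' by the object for $f$ is independent of that block's $g$-value, justified exactly as in the paper). The only divergence is the step you left as a plan --- locality of the composed witness set --- which the paper verifies directly from the definition rather than via Proposition \ref{prop:stopping-set-local-set}: conditioning on $\mathcal I_f\circ\mathcal I_g=J$ and on $x\vert_J$, the bits in unselected blocks are handled by locality of $\mathcal I_f$ and the remaining unqueried bits of selected blocks by locality of the $\mathcal I_g^{(j)}$, so your stopping-set detour, while viable, is not needed.
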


These results are proven at the end of this subsection. We will often make use of the following immediate corollary.

\begin{corollary}\label{cor:composition-distributional}
	If a Boolean function $f$ satisfies $\P_{x \sim \pi_p}[f(x)=1]=p$, then for every $k\ge 1$,
	\begin{enumerate}
		\item[(i)] 	$s(f^k,\pi_p) = s(f,\pi_p)^k$,
		\item[(ii)] $	\ell(f^k,\pi_p) \le \ell(f,\pi_p)^k$,
		\item[(iii)] $	sc(f^k,\pi_p) \le sc(f,\pi_p)^k$,
		\item[(iv)] $	a(f^k,\pi_p) \le a(f,\pi_p)^k$,
	\end{enumerate}
\end{corollary}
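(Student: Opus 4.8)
The plan is a short induction on $k$ built on Proposition~\ref{prop:composition-distributional}; the only substantive point is to check that the hypothesis $\P_{x \sim \pi_p}[f(x)=1]=p$ is inherited by every iterate $f^j$. To that end I would first record the elementary fact that the ``bias'' composes multiplicatively: for Boolean functions $h_1,h_2$,
\[
	\P_{x\sim\pi_p}\left[(h_1\circ h_2)(x)=1\right] = \P_{y\sim\pi_{q}}\left[h_1(y)=1\right], \qquad q := \P_{x\sim\pi_p}[h_2(x)=1].
\]
This holds because under $\pi_p$ the $n$ disjoint blocks of bits feeding the copies of $h_2$ are i.i.d., so the values of $h_2$ on them are i.i.d.\ $\mathrm{Ber}(q)$, and $h_1$ is then evaluated on these. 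Applying this with $h_1 = f$ and $h_2 = f^{j-1}$ and inducting on $j$, the fixed-point hypothesis $\P_{x\sim\pi_p}[f(x)=1]=p$ yields $\P_{x\sim\pi_p}[f^{j}(x)=1]=p$ for every $j\ge 0$.

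With this in hand I would prove (i)--(iv) by induction on $k$, the case $k=1$ being immediate. Writing $f^{k}=f\circ f^{k-1}$ and applying Proposition~\ref{prop:composition-distributional} with outer function $f$ and inner function $f^{k-1}$ --- whose bias is $p$ by the previous step --- part (i) gives
\[
	s(f^{k},\pi_p) = s(f,\pi_{p})\cdot s(f^{k-1},\pi_p),
\]
and parts (ii)--(iv) give the analogous statement with $s$ replaced by $\ell$, $sc$, or $a$ and equality replaced by $\le$. Combining with the inductive hypothesis (an equality for $s$, a $\le$ for the other three) completes the induction and hence the proof.

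There is essentially no obstacle beyond this bookkeeping; the one point not to be overlooked is the fixed-point observation, since without $\P_{x\sim\pi_p}[f^{k-1}(x)=1]=p$ the inner measure $\pi_{g(p)}$ appearing in Proposition~\ref{prop:composition-distributional} would vary from level to level and one would only obtain a bound of the form $\prod_j s(f,\pi_{p_j})$ rather than the clean power $s(f,\pi_p)^k$. It is also worth noting that Boolean composition is associative, so it is immaterial whether one peels off the outermost or the innermost copy of $f$ in the induction.
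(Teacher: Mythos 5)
Your proof is correct and matches the paper's intent: the paper presents this as an immediate corollary of Proposition \ref{prop:composition-distributional}, obtained exactly by the induction on $k$ you describe, with the fixed-point observation $\P_{x\sim\pi_p}[f^{j}(x)=1]=p$ ensuring the inner measure stays $\pi_p$ at every level. Nothing further is needed.
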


The previous proposition makes no statement about the distributional block sensitivity or witness complexity of composed Boolean functions. The next example in fact shows that submultiplicativity does not necessarily hold for these two complexities.

\begin{example}
	Choose $f=\textrm{OR}_2$ and $g=\textrm{AND}_2$ so that $f \circ g$ is the TRIBES-function with $\ell = m=2$ (see also Example \ref{ex:composition-det}). On the one hand, it immediately follows from the formulas in Example \ref{ex:or-function} (for the OR function, the complexities are the same as for the AND function with $p$ replaced by $1-p$) and $g(p)=p^2$  that 
	\begin{equation}
		b(f,\pi_{g(p)}) \cdot b(g,\pi_{p}) = w(f,\pi_{g(p)}) \cdot w(g,\pi_{p}) = (1+(1-p^2)^2) \cdot (1+p^2).
	\end{equation}
	On the other hand, it is easy to see that $b_{f \circ g}(x) = w_{f \circ g}(x) = 2$ for every $x\in \{0,1\}^4$ and thus,
	\begin{equation}
		b(f\circ g,\pi_p) = w(f\circ g,\pi_p) = 2.
	\end{equation}
	Since $2 > (1+(1-p^2)^2) \cdot (1+p^2)$ for every $p \in (0,1)$, this example shows that neither the distributional block sensitivity nor the witness complexity of $f \circ g$ can be bounded from above by the product of the respective complexities of $f$ and $g$. 
\end{example}
Not surprisingly, a general lower bound in terms of the product of the respective complexities cannot hold either for distributional block sensitivity and witness complexity. This can easily be checked by considering $f=g=\textrm{AND}_2$.

\begin{remark}
	It is no coincidence that the distributional block sensitivity and distributional witness complexity are equal in the previous example. We will get back to this in Subsection \ref{subsec:percolation-examples}.
\end{remark}

Finally, let us mention that the inequalities in parts (ii)--(iv) of Proposition \ref{prop:composition-distributional} can be strict. This can be seen by taking $f=g=\textrm{MAJ}_3$, for which we have seen in Example \ref{ex:majority-3} that $\ell(\textrm{MAJ}_3,\pi_{1/2}) = sc(\textrm{MAJ}_3,\pi_{1/2}) = a(\textrm{MAJ}_3, \pi_{1/2}) = 5/2$. It is well-known and can easily be checked that $a(f \circ g, \pi_{1/2}) < (5/2)^2$.

\begin{proof}[Proof of Proposition \ref{prop:composition-distributional}] 
	Let $f$ be a Boolean function on $n$ bits, $g$ on $m$ bits. 
	
	We begin with part (i) which is  a consequence of the standard fact that being pivotal is independent of the bit's value. Note that 
	\begin{equation}
		s_{f\circ g}\left(x^1_{1},\ldots,x^1_{m},\ldots,x^n_{1},\ldots,x^n_{m}\right) = \sum_{i=1}^n \mathbf{1}_{i\ \textrm{pivotal for}\ f} \cdot s_g\left(x^i_{1},\ldots,x^i_{m}\right).
	\end{equation}
	Taking expectations and using the fact that the event $\{i\ \textrm{pivotal for}\ f\}$ is measurable with respect to the bits $x_j^{i'}$ with $j \in [m]$ and $i' \in [n] \setminus \{i\}$, we get 
	\begin{align}
		s(f\circ g,\pi_p) &= \sum_{i=1}^n \mathbb E_{x \sim \pi_p} \left[ \mathbf{1}_{i\ \textrm{pivotal for}\ f} \cdot s_g\left(x^i_{1},\ldots,x^i_{m}\right)\right] \\
		&= \sum_{i=1}^n \mathbb P_{x \sim \pi_p} \left[ i\ \textrm{pivotal for}\ f\right] \cdot s(g,\pi_p) 
		= \underbrace{\mathbb E_{x \sim \pi_p}\left[ \sum_{i=1}^n \mathbf{1}_{i\ \textrm{pivotal for}\ f}\right]}_{= s(f,\pi_{g(p)})}
		\cdot s(g,\pi_p).
	\end{align}% 
	
	For the rest of the proof, let us write $x = (x_{B_1},\ldots,x_{B_n})$ with $x_{B_i} := (x^i_{1},\ldots,x^i_{m})$ and $y = y(x) := (g(x_{B_1}),\ldots,g(x_{B_n}))$.  To prove part (ii), we proceed in three steps.
	
	\noindent \textit{Step 1: Construction of a natural witness set $\mathcal I_f \circ \mathcal I_g \sim f\circ g$ from two witnesses sets $\mathcal I_f \sim f$ and $\mathcal I_g \sim g$}
	
	Given a witness set $\mathcal I_g$ for $g$ and $x \sim \pi_p$, we denote by $\mu_g$ the joint distribution of $(\mathcal I_g,x)$. Moreover, given a witness set $\mathcal I_f$ for $f$ and $x \sim \pi_{g(p)}$, we denote the conditional distribution of $\mathcal I_f$ given $x$ by $\mu_f^{x}$. We now construct $\mathcal I_f \circ \mathcal I_g$. Let $\mathcal I_g^1,\ldots,\mathcal I_g^n$ be random sets such that $\mathcal I_g^i$ is a witness set for $g$ on $x_{B_i}$ for every $i \in [n]$, and the pairs $(\mathcal I_g^i,x_{B_i})_{i=1}^n$ are i.i.d.\ with distribution $\mu_g$. Conditional on $y$, we now choose $\mathcal I_f^\star \subseteq [n]$ with conditional law $\mu_f^y$. Note that by construction, $\mathcal I_f^\star$ is conditionally independent of $x$ and $\mathcal I_g^1,\ldots,\mathcal I_g^n$ given $y$. We now define 
	\begin{equation}
		\mathcal I_f \circ  \mathcal I_g := \{(i-1)m + j : i \in \mathcal I_f^\star,\; j \in \mathcal I_g^i\} \;. 
	\end{equation}
	Note that $\mathcal I_f \circ  \mathcal I_g$ is a witness set for $f \circ g$.
	
	\noindent \textit{Step 2: If $\mathcal I_f$ is local for $x \sim \pi_{g(p)}$ and $\mathcal I_g$ is local for $x\in \pi_p$, then $\mathcal I_f \circ  \mathcal I_g$ is local for $x\sim \pi_p$.}	
	
	We need to show that for every $J \subseteq [nm]$, conditional on $\mathcal I_f \circ  \mathcal I_g = J$ and on $x\vert_J$, we have $x\vert_{J^\text{c}} \sim \pi_p$. To this end, we partition $J^\text{c}$ into $J_1$ and $J_2$ defined by 
	\begin{equation}
		J_1 := \{(i-1)m + j : i \not\in \mathcal I_f^\star,\; j \in [n]\}, \quad J_2 := \{(i-1)m + j : i \in \mathcal I_f^\star,\; j \not\in \mathcal I_g^i\}. 
	\end{equation}
	First, by the construction of $\mathcal I_f \circ  \mathcal I_g$ and the locality of $\mathcal I_f$, it follows that $x\vert_{J_1} \sim \pi_p$ given $\mathcal I_f \circ  \mathcal I_g = J$ and $x\vert_J$. Second, again by the construction of $\mathcal I_f \circ  \mathcal I_g$ and by the locality of $\mathcal I_g$, it follows that $x\vert_{J_2} \sim \pi_p$ given $\mathcal I_f \circ  \mathcal I_g = J$, $x\vert_J$, and $x\vert_{J_1}$.
	
	\noindent\textit{Step 3: $\E_{x \sim \pi_{p}}[\abs{\mathcal I_f \circ \mathcal I_g}] = \E_{x \sim \pi_{g(p)}}[\abs{\mathcal I_f}] \cdot \E_{x \sim \pi_{p}}[\abs{\mathcal I_g}]$.}
	
	It follows immediately from the construction of $\mathcal I_f \circ \mathcal I_g$ that 
	\begin{equation}\label{eq:proof-composition-1}
		\abs{\mathcal I_f \circ \mathcal I_g} =  \sum_{i=1}^n\mathbf{1}_{i \in \mathcal I_f^\star} \cdot \abs{\mathcal I_g^i}. 
	\end{equation}
	We first note that for every $i \in [n]$,
	\begin{align}
		&\mathbb E_{x \sim \pi_{p}} \left[\mathbf{1}_{i \in \mathcal I_f^\star} \cdot \abs{\mathcal I_g^i}\right] = \mathbb E_{x \sim \pi_{p}} \Big[ \underbrace{\mathbb E_{x \sim \pi_p} \left[\mathbf{1}_{i \in \mathcal I_f^\star} \mid x_{B_i}, \mathcal I_g^i \right]}_{=\mathbb P_{y \sim \pi_{g(p)}} \left[i \in \mathcal I_f^\star \mid y_i\right]} \cdot \abs{\mathcal I_g^i}\Big] = \mathbb P_{y \sim \pi_{g(p)}} \left[\mathbf{1}_{i \in \mathcal I_f^\star} \right] \cdot \mathbb E_{x \sim \pi_{p}} \left[\abs{\mathcal I_g^i}\right],
	\end{align}
	where in the last equality we used that $y_i$ is independent of $\mathbf{1}_{i \in \mathcal I_f^\star}$ by locality. Summing over $i\in [n]$ and using \eqref{eq:proof-composition-1}, we get
	\begin{equation}
		\E_{x \sim \pi_{p}}[\abs{\mathcal I_f \circ \mathcal I_g}] =
		\sum_{i=1}^n \mathbb P_{y \sim \pi_{g(p)}} \left[\mathbf{1}_{i \in \mathcal I_f^\star} \right] \cdot \mathbb E_{x \sim \pi_{p}} \left[\abs{\mathcal I_g^i}\right]
		= \E_{x \sim \pi_{g(p)}}[\abs{\mathcal I_f}] \cdot \E_{x \sim \pi_{p}}[\abs{\mathcal I_g}],
	\end{equation}
	yielding step 3.
	To conclude the proof of (ii), we take optimal local witness sets $\mathcal I_f \sim f$ for $x \sim \pi_{g(p)}$ and $\mathcal I_g \sim g$ for $x \sim \pi_p$, and deduce from step 3 that 
	\begin{equation}
		\ell(f \circ g,\pi_p) \le \E_{x \sim \pi_{p}}[\abs{\mathcal I_f \circ \mathcal I_g}] = \E_{x \sim \pi_{g(p)}}[\abs{\mathcal I_f}] \cdot \E_{x \sim \pi_{p}}[\abs{\mathcal I_g}]  = \ell(f,\pi_{g(p)}) \cdot \ell(g,\pi_{p}).
	\end{equation}	
	
	To prove part (iii), we follow a similar strategy in two steps.
	
	\noindent \textit{Step 1: Construction of a natural subcube partition $\mathcal C_f \circ \mathcal C_g \sim f\circ g$ from two subcube partitions $\mathcal C_f \sim f$ and $\mathcal C_g \sim g$}
	
	Let $C(y) \in \mathcal C_f$ be the subcube containing $y$. For every $x \in \{0,1\}^{n\cdot m}$, we now define $C(x) := \prod_{i=1}^n C_i(x)$, where
	\begin{equation}
		C_i(x) := \begin{cases}
			C(x_{B_i}) \in \mathcal C_g		&\textrm{if} \ i \ \textrm{is fixed by} \ C(y), \\
			\{0,1\}^m &\textrm{if} \ i \ \textrm{is not fixed by} \ C(y). \\			
		\end{cases}
	\end{equation}
	It now remains to show that $\{C(x)\}_{x \in \{0,1\}^{n\cdot m}}$ is a subcube partition for $f \circ g$, which we will denote by $\mathcal C_f \circ \mathcal C_g$. First, note that $C(y) \cap C(y') = \emptyset$ implies $C(x) \cap C(x') = \emptyset$. Hence, if $C(x) \cap C(x') \neq \emptyset$, then $C(y) = C(y')$. Hence, $C(x_{B_i}) = C(x'_{B_i})$ for every $i$ that is fixed by $C(y)$, and so $C(x) = C(x')$. Finally, we observe that the constructed subcube partition satisfies $\mathcal C_f \circ \mathcal C_g \sim f \circ g$.
	
	\noindent\textit{Step 2: The equality $sc(\mathcal C_f \circ \mathcal C_g,\pi_p) = sc(\mathcal C_f,\pi_{g(p)}) \cdot sc(\mathcal C_g,\pi_{p})$ holds, where we used the notation $sc(\mathcal C,\pi_p) = \mathbb E_{x \sim \pi_{p}} [c(\mathcal C,x)]$.}
	
	It follows immediately from the construction of $\mathcal C_f \circ \mathcal C_g$ that 
	\begin{equation}\label{eq:proof-composition-2}
		c(\mathcal C_f \circ \mathcal C_g,x) = \sum_{i=1}^n\mathbf{1}_{i \ \textrm{is fixed by} \ C(y)} \cdot c(\mathcal C_g,x_{B_i}) 
	\end{equation}
	We first note that for every $i \in [n]$,
	\begin{align}
		&\mathbb E_{x \sim \pi_{p}} \left[\mathbf{1}_{i \ \textrm{is fixed by} \ C(y)} \cdot c(\mathcal C_g,x_{B_i})\right] \\
		&= \mathbb E_{x \sim \pi_{p}} \Big[ \underbrace{\mathbb E_{x \sim \pi_p} \left[\mathbf{1}_{i \ \textrm{is fixed by} \ C(y)} \mid x_{B_i}\right]}_{=\mathbb P_{y \sim \pi_{g(p)}} \left[i \ \textrm{is fixed by} \ C(y) \mid y_i\right]} \cdot c(\mathcal C_g,x_{B_i})\Big] \\
		&= \mathbb P_{y \sim \pi_{g(p)}} \left[i \ \textrm{is fixed by} \ C(y) \right] \cdot sc(\mathcal C_g, \pi_p),
	\end{align}
	where in the last equality we used that $y_i$ is independent of $\{i \ \textrm{is not fixed by} \ C(y)\}$. Now, taking $\mathbb E_{x \sim \pi_p}$ in \eqref{eq:proof-composition-2}, we get
	\begin{equation}
		sc(\mathcal C_f \circ \mathcal C_g,\pi_p) = \sum_{i=1}^n  \mathbb P_{y \sim \pi_{g(p)}} \left[i \ \textrm{is fixed by} \ C(y) \right] \cdot sc(\mathcal C_g, \pi_p) =  sc(\mathcal C_f, \pi_{g(p)}) \cdot sc(\mathcal C_g, \pi_p),
	\end{equation}
	yielding step 2.
	To conclude the proof of (iii), we take optimal subcube partitions $\mathcal C_f \sim f$ and $\mathcal C_g \sim g$ and deduce from step 2 that 
	\begin{equation}
		sc(f \circ g,\pi_p) \le sc(\mathcal C_f \circ \mathcal C_g,\pi_p) =  sc(\mathcal C_f, \pi_{g(p)}) \cdot sc(\mathcal C_g, \pi_p) = sc(f,\pi_{g(p)}) \cdot sc(g,\pi_{p}).
	\end{equation}
	
	Finally, we show part (iv). We choose an optimal algorithm $A_f$ for $f$ and an optimal algorithm $A_g$ for $g$, and define an algorithm for $f \circ g$, denoted by $A_{f}\circ A_{g}$, as follows: Whenever the algorithm $A_f$ queries a bit $i\in [n]$, the algorithm $A_{f}\circ A_{g}$ determines the value of $y_i$ by using the algorithm $A_g$ on the bits corresponding to $(x_1^i,\ldots,x_m^i)$, and as soon as $y_i = g(x_{B_i})$ is determined, $A_{f}\circ A_{g}$ proceeds analogously with the next query of $A_f$. Clearly, this algorithm determines $f \circ g$ and from here, we conclude as in part (ii).
\end{proof}

\begin{remark}
	Analogously to part (i) of the last proof, one can show that for all Boolean functions $f$ and $g$,
	\begin{equation}
		b(f\circ g,\pi_p) \ge s(f,\pi_{g(p)}) \cdot b(g,\pi_p).
	\end{equation}
\end{remark}

\begin{remark} 
	Under the additional assumption that 
	\begin{equation}
		\mathbb E_{x \sim \pi_p} [ w_g(x) \mid g(x) = 1] = \mathbb E_{x \sim \pi_p} [ w_g(x) \mid g(x) = 0],
	\end{equation}
	one can prove that $w(f\circ g, \pi_p) \le w(f,\pi_{g(p)}) \cdot w(g,\pi_p)$. 
\end{remark}

\section{Examples}\label{sec:examples}

In the first subsection here, we consider classical examples of Boolean functions and study their distributional complexities. In the second subsection, we consider examples related to percolation theory.

\subsection{Classical examples} \label{subsec:classical-examples}

\begin{example}\label{ex:or-function}
	Concerning the AND-function on $n$ bits (see Example \ref{ex:tribes-function}), it is easy to see that some of its distributional complexities  are given by 
	\begin{align}
		&s(\textrm{AND}_n,\pi_p) = n \cdot p^{n-1}, \\
		&b(\textrm{AND}_n,\pi_p) = w(\textrm{AND}_n,\pi_p) = n \cdot p^n + (1-p^n), \\
		& a(\textrm{AND}_n,\pi_p) = \frac{1-p^n}{1-p}.
	\end{align}
	For the distributional subcube partition complexity, we have the following partial result.
	\begin{proposition}\label{prop:sc-equal-a-for-and-func}
		For $p \le 1/2$, we have $sc(\textrm{AND}_n,\pi_p)= a(\textrm{AND}_n,\pi_p)$.
	\end{proposition}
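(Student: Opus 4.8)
The plan is to establish the reverse inequality $sc(\textrm{AND}_n,\pi_p)\ge a(\textrm{AND}_n,\pi_p)=\frac{1-p^n}{1-p}$, since $sc\le a$ is already provided by Proposition~\ref{prop:hierarchy-dist} and the value $a(\textrm{AND}_n,\pi_p)=\frac{1-p^n}{1-p}$ is recorded above. I will prove this lower bound by induction on $n$. The base case $n=1$ is immediate: the only subcube partition determining $\textrm{AND}_1$ is $\{(0),(1)\}$, which fixes exactly one bit on every input, so $sc(\textrm{AND}_1,\pi_p)=1=\frac{1-p}{1-p}$.

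For the inductive step, fix $n\ge2$ and an arbitrary $\mathcal C\sim\textrm{AND}_n$. The first observation is that the subcube containing $(1,\dots,1)$ is forced to be the singleton $\{(1,\dots,1)\}$, since $\textrm{AND}_n$ equals $1$ only there; hence every other subcube of $\mathcal C$ fixes some bit to $0$ and so has codimension at least $1$. I will split into two cases according to whether $\mathcal C$ uses a codimension-$1$ subcube. Suppose first that it does, say $C^\ast\in\mathcal C$ has codimension $1$. Since $C^\ast$ is constant equal to $0$ and the set $\{x:x_i=1\}$ is not constant for $\textrm{AND}_n$ when $n\ge2$, necessarily $C^\ast=\{x:x_i=0\}$ for some coordinate $i$. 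Every other subcube of $\mathcal C$ is disjoint from $C^\ast$, hence contained in $\{x:x_i=1\}$, hence fixes bit $i$ to $1$; deleting coordinate $i$ therefore turns $\mathcal C\setminus\{C^\ast\}$ into a subcube partition $\mathcal C'$ of $\{0,1\}^{[n]\setminus\{i\}}$ determining $\textrm{AND}_{n-1}$. Splitting the expectation on the value of $x_i$ and accounting for the extra fixed bit on the slice $\{x_i=1\}$ gives
\begin{equation}
	\E_{x\sim\pi_p}[c(\mathcal C,x)]=(1-p)\cdot 1+p\bigl(1+\E_{x\sim\pi_p}[c(\mathcal C',x)]\bigr)=1+p\,\E_{x\sim\pi_p}[c(\mathcal C',x)],
\end{equation}
and the induction hypothesis $\E_{x\sim\pi_p}[c(\mathcal C',x)]\ge sc(\textrm{AND}_{n-1},\pi_p)=\frac{1-p^{n-1}}{1-p}$ yields $\E_{x\sim\pi_p}[c(\mathcal C,x)]\ge 1+p\cdot\frac{1-p^{n-1}}{1-p}=\frac{1-p^n}{1-p}$.

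In the remaining case $\mathcal C$ uses no codimension-$1$ subcube; since $\textrm{AND}_n$ is non-constant it also uses no codimension-$0$ subcube, so every subcube of $\mathcal C$ has codimension at least $2$, whence $\E_{x\sim\pi_p}[c(\mathcal C,x)]\ge2$. This is the only point where I will use $p\le1/2$: it gives $a(\textrm{AND}_n,\pi_p)=\sum_{k=0}^{n-1}p^k<\frac{1}{1-p}\le2\le\E_{x\sim\pi_p}[c(\mathcal C,x)]$. Combining the two cases, every $\mathcal C\sim\textrm{AND}_n$ satisfies $\E_{x\sim\pi_p}[c(\mathcal C,x)]\ge a(\textrm{AND}_n,\pi_p)$, closing the induction and hence the proof.

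I expect the conceptual heart of the argument to be this dichotomy rather than any computation: a subcube partition of $\textrm{AND}_n$ either begins by splitting off the full half-cube $\{x_i=0\}$ — in which case it is essentially forced to recurse and the telescoping identity applies verbatim — or it does not, in which case every subcube is ``expensive'' and the crude bound $c(\mathcal C,x)\ge2$ already beats $a(\textrm{AND}_n,\pi_p)$, but only once $p\le1/2$ keeps $a(\textrm{AND}_n,\pi_p)$ strictly below $2$. The inequality itself is robust, and the threshold $p=1/2$ is precisely the point past which this second half of the argument breaks down, which is presumably why the proposition is only claimed for $p\le1/2$.
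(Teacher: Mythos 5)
Your proof is correct and follows essentially the same route as the paper: induction on $n$, with the observation that for $p\le 1/2$ the value $\frac{1-p^n}{1-p}<2$ forces consideration of a codimension-$1$ subcube (necessarily $\{x:x_i=0\}$), after which the partition restricts to one for $\textrm{AND}_{n-1}$ on the slice $\{x_i=1\}$ and the same telescoping identity closes the induction. The only cosmetic difference is that you run the dichotomy over an arbitrary partition while the paper applies the ``cost $\ge 2$'' bound directly to an optimal one; the content is identical, and your spelled-out justification that the codimension-$1$ subcube must fix its bit to $0$ is what the paper leaves as a ``without loss of generality.''
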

	\begin{proof}
		We prove by induction on $n$ that $sc(\textrm{AND}_n,\pi_p) = \frac{1-p^n}{1-p}$ with $n=1$ being obvious. Let $\mathcal C$ by a subcube partition $\mathcal C$ achieving the minimum $sc(\textrm{AND}_{n+1},\pi_p)$. Since $p \le 1/2$, we have $a(\textrm{AND}_n,\pi_p)<2$ for every $n\ge 1$, and hence $\mathcal C$ must contain a subcube with only 1 fixed bit. Without loss of generality, $\mathcal C$ contains the subcube $(0,\star,\ldots,\star)$. We are thus left with choosing an optimal subcube partition of $(1,\star,\ldots,\star)$, which is equivalent to choosing an optimal subcube partition for $\textrm{AND}_n$. Using the induction hypothesis,
		\begin{equation}
			\mathbb E_{x \sim \pi_p} \left[ c(\mathcal C,x)\right] = (1-p)\cdot 1 + p\cdot \left(\frac{1-p^n}{1-p}+1\right)  = \frac{1-p^{n+1}}{1-p}.
		\end{equation}%
	\end{proof}
\end{example}

\begin{example} \label{ex:iterated-3-MAJ} 
	Iterated 3-MAJORITY is the Boolean function $f^k$ on $n=3^k$ bits defined by $f := \textrm{MAJ}_3$ (see Example \ref{ex:majority-3}) and iteratively by $f^k := f \circ f^{k-1}$ for $k \ge 2$. 
	Fix $p=1/2$ and recall that the distributional complexities of $f = \textrm{MAJ}_3$ are given by
	\begin{align}
		&s(\textrm{MAJ}_3,\pi_{1/2}) = 3/2 ,\ b(\textrm{MAJ}_3,\pi_{1/2}) = 7/4, \ w(\textrm{MAJ}_3,\pi_{1/2})) = 2, \\ &\textrm{and} \ \ell(\textrm{MAJ}_3,\pi_{1/2}) = sc(\textrm{MAJ}_3,\pi_{1/2}) = a(\textrm{MAJ}_3, \pi_{1/2}) =5 /2. 
	\end{align}
	We note that $f = \textrm{MAJ}_3$ is balanced, i.e.\ $\mathbb P_{x \sim \pi_{1/2}}[f(x)=1] = 1/2$. Hence, by part (i) of Corollary \ref{cor:composition-distributional},
	\begin{equation}
		s(f^k,\pi_{1/2}) = (3/2)^k.
	\end{equation}
	Moreover, we note that $w_{f^k}(x)=2^k$ for all $x \in \{0,1\}^{3^k}$ and so
	\begin{equation}
		w(f^k,\pi_{1/2}) = 2^k.
	\end{equation}
	From the inequality of O'Donnell-Servedio (see Theorem \ref{thm:odonnell-servedio}) and part (iii) of Corollary \ref{cor:composition-distributional}, we get
	\begin{equation}
		(5/2)^k \ge a(f^k,\pi_{1/2}) \ge sc(f^k,\pi_{1/2}) \ge \ell(f^k,\pi_{1/2}) \overset{\textrm{(OS)}}{\ge} \left(s(f^k,\pi_{1/2})\right)^2 = (9/4)^k \ .
	\end{equation}		
\end{example}

\begin{example} \label{ex:majority-n}
	The MAJORITY function on $n$ bits (with $n$ odd) is defined by 
	\begin{equation}
		\textrm{MAJ}_n (x_1,\ldots,x_n) = \begin{cases} 	1 &\textrm{if} \sum_{i=1}^n x_i \ge \frac{n+1}{2}, \\
			0 &\textrm{if} \sum_{i=1}^n x_i \le \frac{n-1}{2}.
		\end{cases}
	\end{equation}
	For $p=1/2$, it is balanced and its distributional sensitivity is given by
	\begin{align}
		s(\textrm{MAJ}_n,\pi_{1/2}) = \frac{n+1}{2} \cdot \mathbb P_{x \sim \pi_{1/2}}\big[\sum_{i=1}^n x_i \in \left\{\tfrac{n-1}{2},\tfrac{n+1}{2}\right\}\big] \sim \sqrt{2/\pi} \cdot \sqrt{n}.
	\end{align}
	Next, we compute its distributional block sensitivity. Let $2\ell +1$ with $\ell \in \{0,\ldots, \tfrac{n-1}{2}\}$ be the difference between the number of 1's and 0's in $x$ with $\textrm{MAJ}_n(x)=1$. Then
	\begin{align}
		b_{\textrm{MAJ}_n}(x) = \lfloor\frac{\max\{\#\ \textrm{of}\ 1'\textrm{s in}\ x,\ \#\ \textrm{of}\ 0'\textrm{s in}\ x\}}{\ell +1}\rfloor =\lfloor\frac{n+2\ell +1}{2\ell +2}\rfloor
	\end{align}
	and so 
	\begin{align}
		b(\textrm{MAJ}_n,\pi_{1/2}) = 2 \sum_{\ell =0}^{\frac{n-1}{2}} \binom{n}{\frac{n+2\ell+1}{2}} \cdot 2^{-n} \cdot \lfloor\frac{n+2\ell +1}{2\ell +2}\rfloor \asymp \log(n)\cdot \sqrt{n}.
	\end{align}
	Since $w_{\textrm{MAJ}_n}(x)=\frac{n+1}{2}$ for every $x$, the distributional witness complexity is 
	\begin{align}
		w(\textrm{MAJ}_n,\pi_{1/2}) = \frac{n+1}{2}.
	\end{align}
	Finally, we note that the distributional algorithmic complexity is 
	\begin{align}
		a(\textrm{MAJ}_n,\pi_{1/2}) = n - o(n).
	\end{align}	
\end{example}

\begin{example}\label{ex:balanced-tribes-function}
	TRIBES with $\ell$ tribes of size $m$ is the Boolean function on $n = \ell \cdot m$ bits defined by 
	\begin{equation}
		\textrm{TRIBES}_{\ell,m} = \textrm{OR}_\ell \circ \textrm{AND}_m  .
	\end{equation}
	We fix  $p=1/2$ and $\ell =2^m$, and note that for these choices 
	\begin{equation}
		\mathbb P_{x \sim \pi_{1/2}}\big[\textrm{TRIBES}_{2^m,m}(x)=1\big] = 1 - (1-\tfrac{1}{2^{m}})^{2^m} \to 1- e^{-1} \ \text{as}\ m\to \infty.
	\end{equation}
	It is easy to see that the distributional complexities are given by
	\begin{align}
		&s\big(\textrm{TRIBES}_{2^m,m},\pi_{1/2}\big) = m\cdot 2^m \cdot \frac{1}{2^{m-1}} \big(1-\tfrac{1}{2^m}\big)^{2^m-1} \sim 2m \cdot e^{-1} \sim \frac{2}{e} \log_2(n), \\
		&b\big(\textrm{TRIBES}_{2^m,m},\pi_{1/2}\big) = w\big(\textrm{TRIBES}_{2^m,m},\pi_{1/2}\big) \sim 2^m\cdot e^{-1}  + m \cdot (1-e^{-1}) \asymp \frac{n}{\log_2(n)}, \\
		&a\big(\textrm{TRIBES}_{2^m,m},\pi_{1/2}\big)  \asymp \frac{n}{\log_2(n)}.
	\end{align}
	Thus, we also have $sc(\textrm{TRIBES}_{2^m,m},\pi_{1/2})  \asymp \frac{n}{\log_2(n)}$ and $\ell(\textrm{TRIBES}_{2^m,m},\pi_{1/2})  \asymp \frac{n}{\log_2(n)}$.
\end{example}

\subsection{Percolation functions} \label{subsec:percolation-examples}

Let $G=(V,E)$ be a finite multigraph. A percolation configuration $\omega \in \{0,1\}^E$ declares edges \emph{open} if $\omega(e)=1$ or \emph{closed} if $\omega(e)=0$. We identify $\omega$ with the subgraph of open edges $G_\omega := (V,E_\omega)$, where   $E_\omega := \{ e \in E: \omega(e)=1\}$, and introduce some notation to describe its connectivity properties. A \emph{path} $\gamma = (\gamma_i)_{i=0}^n \subseteq V$ of  length $n$ is a sequence of distinct vertices with $\{\gamma_{i-1},\gamma_i\} \in E$ for all $1 \le i \le n$, and the path is called \emph{open} if $\omega(\{\gamma_{i-1},\gamma_i\})=1$ for  all $1 \le i \le n$. We say that two subsets $A, B \subseteq V$ are \emph{connected} if there exists an open path that starts in $A$ and ends
in $B$, and denote this event by $\{A \leftrightarrow B\}$. 

\begin{definition}
	A Boolean function $f:\{0,1\}^n \to \{0,1\}$ is called a \emph{percolation function} if it is of the form 
	\begin{equation}
		f(\omega) = \begin{cases}
			1 &\textrm{if}\ \omega \in \{A \leftrightarrow B\}, \\
			0 &\textrm{otherwise}, \\
		\end{cases}
	\end{equation}
	for some multigraph $G=(V,E)$ with $n$ edges and some $A,B \subseteq V$.
\end{definition}

\begin{remark}
	We note that the function $\textrm{AND}_2 \circ \textrm{OR}_2$ is a percolation function according to the previous definition, but it cannot be represented by a simple graph.
\end{remark}

AND-functions, OR-functions and TRIBES-functions are percolation functions. However, MAJORITY-functions are not percolation functions. We recall from Example \ref{ex:or-function} that for every $p \in (0,1)$ and $n \ge 1$,
\begin{equation}
	a(\textrm{AND}_n,\pi_p) > w(\textrm{AND}_n,\pi_p) = b(\textrm{AND}_n,\pi_p) > s(\textrm{AND}_n,\pi_p).
\end{equation}
In fact, Corollary \ref{cor:percolation-functions} will show that the above comparisons are general properties of percolation functions. 

\begin{proposition}\label{prop:witness-block-equality}
	For every percolation function $f$ and for every $\omega \in \{0,1\}^n$,
	\begin{equation}
		w_f(\omega) = b_f(\omega).
	\end{equation}%
	In particular, for every $p \in [0,1]$, we have $w(f,\pi_{p}) = b(f,\pi_{p})$.
\end{proposition}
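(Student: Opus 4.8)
# Proof proposal for Proposition~\ref{prop:witness-block-equality}

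The plan is to prove the pointwise identity $w_f(\omega) = b_f(\omega)$ for every fixed configuration $\omega$; the distributional statement $w(f,\pi_p) = b(f,\pi_p)$ then follows immediately by taking expectations with respect to $\pi_p$. Since the general inequality $w_f(\omega) \ge b_f(\omega)$ already holds for \emph{all} Boolean functions (this is exactly the third inequality established in the proof of Proposition~\ref{prop:hierarchy-det}, applied pointwise), the entire content is the reverse inequality $w_f(\omega) \le b_f(\omega)$ for percolation functions. So the task reduces to: given that $f$ is the indicator of $\{A \leftrightarrow B\}$ on a multigraph $G = (V,E)$ with $|E| = n$, and given any $\omega$, exhibit a witness set $W$ for $f$ at $\omega$ whose size is at most $b_f(\omega)$.

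I would split into the two cases according to the value $f(\omega)$. \textbf{Case $f(\omega) = 1$:} then $\omega \in \{A \leftrightarrow B\}$, so there is an open path $\gamma$ from $A$ to $B$; take $W$ to be the edge set of $\gamma$ (a self-avoiding open path, so $|W|$ edges). This $W$ is a witness: revealing that all these edges are open forces $A \leftrightarrow B$ regardless of the other edges. Now I must bound $|W|$ by $b_f(\omega)$: the edges of $\gamma$ are singleton blocks, and flipping any one of them (closing that edge) while keeping the rest of $\omega$ fixed — wait, that need not disconnect $A$ from $B$ in general. So instead I should choose $\gamma$ to be a \emph{shortest} open path, or better, a \emph{minimal} open path in the sense that no proper subset of its edges together with the rest of $\omega$ keeps $A$ and $B$ connected via that path's vertices; the clean statement is that one can choose the witness $W \subseteq E_\omega$ to be minimal with respect to inclusion among open witness sets contained in $E_\omega$. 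For such a minimal $W$, each edge $e \in W$ is \emph{pivotal} in the configuration $\omega$ restricted appropriately — more precisely, $\omega^e \notin \{A \leftrightarrow B\}$ is not automatic, so the right move is: for a minimal open witness $W$, each $e \in W$ has the property that $e$ lies on \emph{every} open path from $A$ to $B$ in $\omega$, hence closing $e$ disconnects $A$ from $B$, i.e. $e$ is pivotal at $\omega$. Then $\{e\}_{e \in W}$ are $|W|$ disjoint blocks witnessing $f(\omega^e) \ne f(\omega)$, so $|W| \le b_f(\omega) \le w_f(\omega) \le |W|$, forcing equality.

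\textbf{Case $f(\omega) = 0$:} then $A$ and $B$ are not connected in $\omega$; let $S$ be the set of vertices reachable from $A$ via open edges, and take $W$ to be the edge boundary $\partial S = \{ e \in E : e \text{ has one endpoint in } S, \text{ one outside} \}$ — wait, $B \cap S = \emptyset$, so this is a genuine cut; more carefully I take $W$ to be the set of \emph{closed} edges crossing this cut (all boundary edges are closed by definition of $S$). Revealing that all edges of $W$ are closed forces $A \not\leftrightarrow B$, so $W$ is a witness. For the bound, pass to a minimal such $W$; then each $e \in W$, being in a minimal closed cut, has the property that opening $e$ alone creates an open connection from $A$ to $B$, i.e. $e$ is pivotal at $\omega$, and again the $|W|$ singletons are disjoint blocks, giving $|W| \le b_f(\omega) \le w_f(\omega) \le |W|$.

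The main obstacle — and the step I would be most careful about — is the passage from an arbitrary witness to a minimal one and the verification that minimality of the witness forces every element to be a singleton pivotal block; this is a small combinatorial argument (Menger-type reasoning: a minimal set of edges whose openness/closedness certifies connectivity/disconnectivity is precisely a minimal path / minimal cut, and in such objects every element is used by all certificates, hence pivotal). One must also be slightly careful that witnesses for percolation functions can in principle be "mixed" (specifying some edges open and some closed), but the two constructions above show that minimal witnesses are pure (all-open paths when $f(\omega)=1$, all-closed cuts when $f(\omega)=0$), which is what makes each specified bit individually pivotal. Once that lemma is in hand, both cases close in one line via the sandwich $b_f(\omega) \le |W| = w_f(\omega) \le b_f(\omega)$, using the universal inequality $b_f \le w_f$ from Proposition~\ref{prop:hierarchy-det}.
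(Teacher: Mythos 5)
Your overall frame (prove the pointwise identity, note $b_f(\omega)\le w_f(\omega)$ is universal, split on $f(\omega)$, use open paths when $f(\omega)=1$ and closed cuts when $f(\omega)=0$) matches the paper, but the mechanism you use for the reverse inequality $w_f(\omega)\le b_f(\omega)$ is wrong: the lemma you lean on, that after passing to a minimal pure witness every specified edge is \emph{individually pivotal} at $\omega$, is false in both cases. For $f(\omega)=1$, take the multigraph with two parallel edges between $a\in A$ and $b\in B$ (so $f=\mathrm{OR}_2$) and $\omega$ with both edges open: the minimal open witnesses are the single edges, and neither is pivotal, since closing one leaves the other open. Minimality of a witness path only says it has no redundant edges; it does not say it is the only open connection, so its edges need not lie on every open path. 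For $f(\omega)=0$, take two edges in series $a-v-b$ (so $f=\mathrm{AND}_2$) with both edges closed: the minimal closed cuts are single edges, and opening one of them alone does not connect $A$ to $B$. In fact, if your lemma held, it would give $s_f(\omega)\ge w_f(\omega)$ and hence $s_f\equiv b_f\equiv w_f$ for all percolation functions, contradicting Corollary \ref{cor:percolation-functions} and the explicit values $s(\mathrm{AND}_n,\pi_p)=np^{n-1}<b(\mathrm{AND}_n,\pi_p)$ from Example \ref{ex:or-function}. So no Menger-type statement of the form ``every element of a minimal certificate is pivotal'' is available.

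The missing idea is that the disjoint blocks realizing $b_f(\omega)=w_f(\omega)$ must be allowed to be large, not singletons. When $f(\omega)=1$, $w_f(\omega)$ is the graph distance from $A$ to $B$ in $G_\omega$, and one exhibits $w_f(\omega)$ pairwise disjoint \emph{open edge-cutsets}, e.g.\ the distance levels $\{e\in E_\omega : d_{G_\omega}(e,A)=i\}$ for $0\le i\le w_f(\omega)-1$; flipping any one whole level closes every open $A$--$B$ path, so each level is a block. When $f(\omega)=0$, contract the open clusters of $G_\omega$ to get a multigraph $\hat G$ whose edges are the closed edges; then $w_f(\omega)$ is the minimum $(A,B)$-edge-cut of $\hat G$, the blocks are \emph{edge-disjoint closed $A$--$B$ paths} in $\hat G$ (opening all edges of such a path simultaneously connects $A$ to $B$), and Menger's theorem — applied to paths versus cuts, not to minimality of a single cut — produces exactly $w_f(\omega)$ disjoint blocks. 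With these replacements your sandwich $w_f(\omega)\le b_f(\omega)\le w_f(\omega)$ closes as intended; as written, the proposal does not establish the inequality $w_f(\omega)\le b_f(\omega)$.
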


\begin{proof}
	Let $\omega$ be such that $f(\omega)=1$. Since $f$ is monotone, it suffices to consider witnesses $W \subseteq E_\omega$ and blocks that are subsets of $ E_\omega$. The witness sets of minimum size are now the shortest open paths from $A$ to $B$ and so $w_f(\omega)$ is the length of a shortest path from $A$ to $B$ in the multigraph $G_\omega$. The maximum number $b_f(\omega)$ of disjoint blocks that flip the outcome of $f$ is thus equal to the maximum number of disjoint $(A,B)$-edge-cutsets for $G_\omega$. While the inequality $b_f(\omega)\le w_f(\omega)$ holds for every Boolean function $f$ and every $\omega$, the inequality $b_f(\omega)\ge w_f(\omega)$ follows by considering the disjoint $(A,B)$-edge-cutsets for $G_\omega$ defined by 
	\begin{equation}
		\left\{e \in E_\omega : d_{G_\omega}(e,A) = i \right\}
	\end{equation}
	for $0 \le i \le w_f(\omega)-1$.
	
	Now, let $\omega$ be such that $f(\omega)=0$. Since $f$ is monotone, it suffices to consider witnesses $W \subseteq E\setminus E_\omega$ and blocks that are subsets of $E\setminus E_\omega$. By collapsing each connected component of $G_\omega$ into a single vertex, we define a new multigraph $\hat{G}=(\hat{V},\hat{E})$ whose vertices $\hat{V}$ correspond to the connected components of $G_\omega$ and whose edges $\hat{E}$ correspond to the closed edges $E\setminus E_\omega$ in the original multigraph $G$. $w_f(\omega)$ is now equal to the size of a minimum $(A,B)$-edge-cutset for $\hat{G}$ and $b_f(\omega)$ is equal to the maximal number of edge-disjoint paths from $A$ to $B$ in $\hat{G}$. The equality $w_f(\omega)=b_f(\omega)$ follows from Menger's Theorem.
\end{proof}

The following corollary is a direct consequence of Propositions \ref{prop:s-b-equal}, \ref{prop:sc-w-equal}, and \ref{prop:witness-block-equality}.
\begin{corollary}\label{cor:percolation-functions}
	For every percolation function $f$ which depends on at least 2 bits, we have that 
	\begin{equation}
		sc(f,\pi_p) > w(f,\pi_p) = b(f,\pi_p) > s(f,\pi_p).
	\end{equation}
\end{corollary}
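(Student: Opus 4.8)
The plan is to deduce the corollary by assembling the three cited results, the only genuinely new input being that every percolation function is monotone. First I would record two elementary observations. (1) A percolation function $f$ is increasing: opening an additional edge can only create new connections, so $\{A\leftrightarrow B\}$ is an increasing event; hence $f$ is monotone. (2) Depending on at least $2$ bits forces $f$ to be non-constant. Consequently $f$ is monotone and non-constant, so the hypotheses of Propositions \ref{prop:sc-w-equal} and \ref{prop:s-b-equal} (the latter valid for arbitrary $f$) are satisfied for every $p\in(0,1)$.

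The middle equality $w(f,\pi_p)=b(f,\pi_p)$ is precisely the last assertion of Proposition \ref{prop:witness-block-equality}, and holds for all $p\in[0,1]$.

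For the left inequality I would combine the hierarchy with the rigidity statement: Proposition \ref{prop:hierarchy-dist} gives $sc(f,\pi_p)\ge w(f,\pi_p)$, and if equality held then Proposition \ref{prop:sc-w-equal} would force $f$ to be a DICTATOR, hence to depend on exactly one bit, contradicting the hypothesis; therefore $sc(f,\pi_p)>w(f,\pi_p)$. For the right inequality, Proposition \ref{prop:hierarchy-dist} gives $b(f,\pi_p)\ge s(f,\pi_p)$, and equality would, by Proposition \ref{prop:s-b-equal}, force $f$ or $1-f$ to be the PARITY function on some subset $S\subseteq[n]$. But $\mathrm{PARITY}_S$ is monotone only when $\abs S\le 1$ (constant if $S=\emptyset$, a dictator if $\abs S=1$), and $1-\mathrm{PARITY}_S$ is monotone only when $S=\emptyset$; in each of these cases $f$ would depend on at most one bit, again a contradiction. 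Hence $b(f,\pi_p)>s(f,\pi_p)$, completing the chain.

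There is essentially no obstacle here — this truly is a corollary — and the only step requiring a little care is the elementary translation of ``$f$ is monotone and depends on at least $2$ bits'' into the two exclusions (not a DICTATOR; $f$ and $1-f$ not PARITY on a subset) needed to invoke Propositions \ref{prop:sc-w-equal} and \ref{prop:s-b-equal}. One should also keep in mind that the two strict inequalities genuinely require $p\in(0,1)$, since the cited rigidity propositions do.
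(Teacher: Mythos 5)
Your proposal is correct and follows exactly the paper's intended route: the corollary is stated there as a direct consequence of Propositions \ref{prop:s-b-equal}, \ref{prop:sc-w-equal} and \ref{prop:witness-block-equality}, combined with the hierarchy of Proposition \ref{prop:hierarchy-dist}, and your translation of ``monotone and depending on at least two bits'' into the exclusions of DICTATOR and (negated) PARITY is precisely the glue the paper leaves implicit. Your remark that the strict inequalities require $p\in(0,1)$ is also an accurate reading of the hypotheses of the cited rigidity propositions.
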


We are not aware of any monotone Boolean function $f$ satisfying $a(f,\pi_p) > sc(f,\pi_p)$ (see Question \ref{question:monotone-subcube-algorithmic}) or $sc(f,\pi_p) > \ell(f,\pi_p)$, and thus also not of a percolation function satisfying this.

\begin{example} \label{ex:square-crossing}
	Consider the square lattice $\Z^2$ and for $m\ge 1$, let $G_m=(V_m,E_m)$ be the graph containing the vertices and edges in the square $[0,m+1]\times[0,m]$. We define the square-crossing function $f_m: \{0,1\}^{E_m} \to \{0,1\}$ by
	\begin{equation}
		f_m(\omega) = 1  \iff   \omega_n \in \begin{minipage}[c]{.1\textwidth}
			\includegraphics[width=\textwidth]{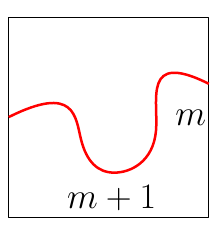}
		\end{minipage},
	\end{equation}
	where the drawing represents a connection from the left side to the right side. This corresponds to choosing $A=\{(0,0),\ldots,(0,m)\}$ and $B=\{(m+1,0),\ldots,(m+1,m)\}$. We point out that $\mathbb P_{\omega \sim \pi_{1/2}}[f_m = 1] = 1/2$ for all $m$ by symmetry and planar duality, and that $\mathbb P_{\omega \sim \pi_{p}}[f_m = 1]$ is known to go to 0 (resp.\ 1) as $m \to \infty$ for $p<1/2$ (resp.\ $p>1/2)$. 
	\begin{proposition}\label{prop:off-critical-percolation}
		Fix $p \in (0,1)$ such that $p\neq 1/2$. 
		\begin{itemize}
			\item[(i)] There exist constants $c, c' >0$ depending only upon $p$ such that for every $m\ge 1$,
			\begin{equation}
				e^{-c m} \le s(f_m,\pi_p) \le e^{-c' m}.
			\end{equation}
			\item[(ii)] There exists a constant $C< \infty$ depending only upon $p$ such that for every $m\ge 1$,
			\begin{equation}
				m+1 \le b(f_m,\pi_{p}) = w(f_m,\pi_{p}) \le \ell(f_m,\pi_p)\le  sc(f_m,\pi_p) \le a(f_m,\pi_p) \le C m.
			\end{equation}
		\end{itemize}	
	\end{proposition}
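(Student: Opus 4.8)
The plan is to prove the two parts separately, with part (i) resting on standard off-critical percolation estimates and part (ii) combining the trivial combinatorial bounds with a deterministic algorithm that exploits exponential decay of cluster sizes.

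For part (i), I would observe that a bit (edge) $e$ is pivotal for $f_m$ at $\omega$ only if there is an open path from the left side to one endpoint of $e$ and an open path from the right side to the other endpoint — more precisely, pivotality forces an open left--right crossing to exist in both $\omega^e$ and $(\omega^e)^e$ except through $e$, which in particular requires the two sides to be joined by open paths to $e$ within a width-$(m+1)$ strip. For $p \neq 1/2$ either the primal model (if $p<1/2$) or the dual model (if $p>1/2$) is subcritical, so the probability that a fixed edge lies in a crossing-type connection decays exponentially in $m$; summing $s(f_m,\pi_p)=\sum_e \P[e\text{ pivotal}]$ over the $O(m^2)$ edges still gives the upper bound $e^{-c'm}$ after absorbing the polynomial factor. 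For the lower bound, exhibit one specific configuration-and-edge, or rather a positive-probability event: condition on a ``near-crossing'' with exactly one closed edge $e$ on a shortest potential path and everything else arranged so $e$ is pivotal; such an event has probability at least $e^{-cm}$ (build an explicit open path of length $O(m)$ minus one edge), and on it $s_{f_m}(\omega)\ge 1$, so $s(f_m,\pi_p)\ge e^{-cm}$.

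For part (ii), the chain of inequalities in the middle is Proposition \ref{prop:hierarchy-dist}, and $b(f_m,\pi_p)=w(f_m,\pi_p)$ is Proposition \ref{prop:witness-block-equality} since $f_m$ is a percolation function. The bound $b(f_m,\pi_p)\ge m+1$ follows because when $f_m(\omega)=1$ there is an open crossing, and any crossing has at least... actually one gets $w_f(\omega)\ge m+1$ whenever there is a crossing (shortest crossing path has length $\ge m+1$) and $b_f(\omega)=w_f(\omega)\ge 1$ otherwise is too weak — instead note that when $f_m(\omega)=0$, by duality there is a dual crossing, and the minimum cutset / Menger argument from Proposition \ref{prop:witness-block-equality} gives $w_f(\omega)=b_f(\omega)\ge$ (height of a dual top--bottom crossing) $\ge m+1$; so $w_f(\omega)\ge m+1$ for \emph{every} $\omega$, hence $w(f_m,\pi_p)\ge m+1$. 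The real content is the upper bound $a(f_m,\pi_p)\le Cm$. Here I would run the following algorithm: explore the open cluster of the left side $A$ using a standard breadth-first interface/exploration, querying edges only as needed; this determines whether $A\leftrightarrow B$. Conditional on $p<1/2$ (subcritical primal), the expected number of edges queried is controlled by the expected size of the boundary of the $A$-cluster, which is $O(m)$ by exponential cluster-size decay (each of the $O(m)$ vertices on or near $A$ has a cluster of expected size $O(1)$, and queried edges are those incident to the explored region). For $p>1/2$, run the dual exploration instead — explore the dual cluster of the top side — and use that $f_m(\omega)=1$ iff no dual top--bottom crossing, again an $O(m)$-expected-queries procedure by subcriticality of the dual. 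One must check the exploration genuinely determines $f_m$ and that ``expected number of queries'' is $\le Cm$ uniformly in $m$; the polynomial volume $O(m^2)$ never enters because exponential decay confines the exploration to an $O(m)$-sized random set in expectation.

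The main obstacle is the upper bound $a(f_m,\pi_p)\le Cm$: making the exploration-algorithm argument rigorous requires (a) choosing the right object to explore (primal $A$-cluster versus dual top side, depending on whether $p<1/2$ or $p>1/2$), (b) verifying that the algorithm, which reveals edges in a prescribed order, actually certifies the value of $f_m$ upon termination, and (c) bounding the expected number of revealed edges by $O(m)$ using subcritical exponential decay of cluster radii/sizes — the subtlety being that the cluster is explored in a finite strip of width $m+1$, so one needs the (standard) fact that off-critical finite-volume clusters touching one side still have $O(1)$ expected size per boundary vertex, summing to $O(m)$. Everything else (part (i), the hierarchy, the $m+1$ lower bound) is either cited or a short percolation estimate.
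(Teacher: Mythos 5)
Your proposal is correct and follows essentially the same route as the paper: an explicit event specifying only $O(m)$ edges forces a pivotal edge and gives the $e^{-cm}$ lower bound; pivotality implies a crossing-type event which is exponentially unlikely off criticality, and a union bound over the $O(m^2)$ edges gives the upper bound; part (ii) combines the hierarchy, $b=w$ from Proposition \ref{prop:witness-block-equality}, a size-$(m+1)$ path/cut count for the lower bound, and the left-boundary cluster-exploration algorithm with $O(1)$ expected cluster size (by subcritical exponential decay, cited from Grimmett) for $a(f_m,\pi_p)\le Cm$. The only differences are cosmetic: the paper dispatches $p>1/2$ once and for all by a duality reduction to $p<1/2$ instead of running dual explorations, and proves the $m+1$ lower bound by exhibiting $m+1$ disjoint blocks directly rather than via the witness/min-cut argument.
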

	\begin{proof}
		By duality, it suffices to consider $p\in (0,1/2)$. For the lower bound in (i), consider the event that all edges of the form $\{(x,0),(x+1,0)\}$ with $0 \le x \le m$ are open and  all edges of the form $\{(0,y),(1,y)\}$ with $1 \le y \le m$ are closed. Note that on this event, the edge $\{(0,0),(1,0)\}$ is guaranteed to be pivotal and that this event has probability $p^{m+1}(1-p)^m $.
		For the upper bound in (i), note that for any edge $e \in E_m$, $e$ being pivotal essentially implies the existence of a left-right crossing. The probability of the latter is exponentially small in $m$ (see \cite{Grimmett1999}). Since we only have order $m^2$ edges, the upper bound follows.
		
		For the first inequality in $(ii)$, we actually show that $b_{f_m}(\omega) \ge m+1$ for every $\omega$. To see this, if $f_m(\omega)=1$, we obtain $m+1$ disjoint blocks 
		\begin{equation}
			B_i := \left\{\{(i,y),(i+1,y)\} \in E_m : \omega(\{(i,y),(i+1,y)\}) = 1 \right\}, \quad 0 \le i \le m,
		\end{equation}
		whose flipping changes the value of $f_m$. Similarly,  if $f_m(\omega)=0$, we obtain $m+1$ disjoint blocks 
		\begin{equation}
			B_i := \left\{\{(x,i),(x+1,i)\} \in E_m : \omega(\{(x,i),(x+1,i)\}) = 0 \right\}, \quad 0 \le i \le m,
		\end{equation}
		whose flipping changes the value of $f_m$. The equality  $b(f_m,\pi_{p}) = w(f_m,\pi_{p})$ follows from Proposition \ref{prop:witness-block-equality}. All the other inequalities follow from the hierarchy (see Proposition \ref{prop:hierarchy-dist}), except the last which we now argue by describing a suitable algorithm: Fixing an arbitrary ordering of the $m+1$ vertices on the left side of the square, the algorithm explores the cluster of each vertex one-by-one, thereby checking if this vertex is connected to the right side of the square. Since the expected size of the cluster of any vertex is of order 1 (see \cite{Grimmett1999}), the result follows. 
	\end{proof}
	
	We now look at the distributional complexities at the critical point $p_c=1/2$. 
	\begin{proposition}\label{prop:critical-percolation}
		Let $p = 1/2$. There exists $c >0$ such that
		\begin{itemize}
			\item[(i)]$	m^c \lesssim s(f_m,\pi_{1/2}) \lesssim  m^{1-c}$,
			\item[(ii)]$m^{1+c} \lesssim b(f_m,\pi_{1/2}) = w(f_m,\pi_{1/2}) \le \ell(f_m,\pi_p) \le sc(f_m,\pi_{1/2}) \le a(f_m,\pi_{1/2}) \lesssim m^{2-c}$.
		\end{itemize}	
	\end{proposition}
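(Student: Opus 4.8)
The plan is to obtain both displayed chains of inequalities from known facts about critical percolation on $\Z^2$, feeding these into the hierarchy of Proposition~\ref{prop:hierarchy-dist} and the identity $b(f_m,\pi_{1/2})=w(f_m,\pi_{1/2})$ of Proposition~\ref{prop:witness-block-equality}. The key quantitative inputs are the critical arm exponents. Recall that the one-arm exponent is $5/48$ and the four-arm exponent is $5/4$ for site percolation on the triangular lattice; for bond percolation on $\Z^2$ the conformally invariant exponents are not known to hold, but the \emph{polynomial} bounds that we need do hold for bond percolation on $\Z^2$: there exist $\alpha,\beta\in(0,1)$ with $\alpha<\beta$ such that the four-arm probability at distance $k$ from the boundary of a box of size $m$ satisfies, up to constants, bounds between $k^{-2+\alpha}$-type and $k^{-\beta}$-type expressions. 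What we really use are the standard consequences (see, e.g., Nolin's quasi-multiplicativity and the RSW-based bounds in Grimmett or Garban--Steif): the expected number of pivotal edges for the crossing of an $m\times m$ box is $m^{2}\cdot\P[\text{four arms to distance }\Theta(m)]$, and this quantity is polynomially bounded both above by $m^{2-c}$ and below by $m^{c}$ for some universal $c>0$.

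For part~(i), distributional sensitivity is exactly the expected number of pivotal edges: $s(f_m,\pi_{1/2})=\E_{\omega\sim\pi_{1/2}}[\,\#\{\text{pivotal edges for }f_m\}\,]$. I would write $s(f_m,\pi_{1/2})=\sum_{e\in E_m}\P_{\pi_{1/2}}[e\text{ is pivotal}]$ and use the standard four-arm characterization of pivotality: $e=\{u,v\}$ is pivotal iff there are two disjoint open paths and two disjoint closed (dual) paths emanating from the endpoints of $e$, reaching the four sides of the box appropriately. Summing the four-arm estimate over all $\Theta(m^2)$ edges, splitting by distance to the boundary, the standard computation gives $m^{c}\lesssim s(f_m,\pi_{1/2})\lesssim m^{1-c}$ for a suitable $c>0$; the exponent $1-c$ rather than $2-c$ comes from the fact that the four-arm exponent exceeds $1$ (so $m^{2}\cdot m^{-(1+c')}=m^{1-c'}$ after summing), which is where the RSW/quasi-multiplicativity machinery enters. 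Concretely I would cite the bound that the four-arm event to distance $R$ has probability at most $R^{-1-c'}$ for some $c'>0$, which is classical for bond percolation on $\Z^2$ via Reimer's inequality together with the five-arm exponent bound, or equivalently follows from $b_D$-type counting arguments; the lower bound $m^{c}$ follows from RSW giving a four-arm probability at least $R^{-2+c''}$.

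For part~(ii), the equality $b(f_m,\pi_{1/2})=w(f_m,\pi_{1/2})$ is Proposition~\ref{prop:witness-block-equality}, and the chain $w\le \ell\le sc\le a$ is Proposition~\ref{prop:hierarchy-dist}, so it remains only to prove $m^{1+c}\lesssim w(f_m,\pi_{1/2})$ and $a(f_m,\pi_{1/2})\lesssim m^{2-c}$. For the upper bound on $a$: the OSSS-type inequality or, more simply here, the O'Donnell--Servedio inequality $\ell(f,\pi_{1/2})\ge 4\,s(f,\pi_{1/2})^{2}$ — wait, that gives a lower bound, not what we want; instead for the upper bound on $a$ I would invoke the known result that there is a randomized algorithm (exploration of crossing clusters interface, as in Schramm--Steif) revealing only $m^{2-c}$ bits in expectation for the crossing event, so $a(f_m,\pi_{1/2})\le R(f_m,\pi_{1/2})\cdot$(a factor)\ldots\ actually $a$ here is the deterministic-query distributional complexity and every bit is eventually queried only until $f$ is determined, and one builds a deterministic algorithm exploring the left-most crossing (or its dual obstruction) whose expected number of queried edges is the expected size of the explored region, bounded by $m^{2-c}$ via the Schramm--Steif bound on revealment combined with Markov. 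For the lower bound $m^{1+c}\lesssim w(f_m,\pi_{1/2})$: on the event $f_m=1$, $w_{f_m}(\omega)$ is the length of a shortest open crossing, and a lower bound of $m^{1+c}$ in expectation follows because, with probability bounded away from $0$ (by RSW), the crossing must traverse a region where the chemical distance exceeds $m^{1+c}$; more robustly, $w_{f_m}(\omega)\ge b_{f_m}(\omega)$ equals the max number of disjoint dual cutsets, and one shows this has expectation $\gtrsim m^{1+c}$ by a second-moment/RSW argument counting disjoint dual arms. The main obstacle is this last lower bound: one must either invoke the chemical-distance exponent (strictly greater than $1$, due to Damron--Hanson--Sosoe and others for $\Z^2$ bond percolation at criticality — known to be polynomial even if the exact value is open) or reprove a polynomial lower bound on expected shortest-crossing length directly from RSW and BK-type arguments. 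I would carry out the easier pieces first (part~(i) via the four-arm sum, the upper bound on $a$ via revealment), then handle the $m^{1+c}$ lower bound last, citing the critical chemical distance polynomial bound.
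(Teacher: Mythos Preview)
Your overall architecture is correct and matches the paper: part~(i) is the four-arm estimate summed over edges; in part~(ii) the equality $b=w$ is Proposition~\ref{prop:witness-block-equality}, the chain $w\le\ell\le sc\le a$ is Proposition~\ref{prop:hierarchy-dist}, the hard lower bound $w(f_m,\pi_{1/2})\gtrsim m^{1+c}$ is the chemical-distance result (the paper cites Damron~2017, which is exactly the Damron--Hanson--Sosoe line you invoke), and only the upper bound on $a$ requires an algorithm. So structurally you have it.

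For the upper bound on $a(f_m,\pi_{1/2})$, however, your discussion is muddled and the paper's argument is simpler than anything you propose. You do not need interface exploration, Schramm--Steif revealment, or Markov. The paper uses the naive algorithm that explores, one by one, the open cluster of each of the $m+1$ vertices on the left side; an edge $e$ is queried only if one of its endpoints is connected by an open path in $G_m$ to the left side. Hence
\[
a(f_m,\pi_{1/2}) \;\le\; \sum_{e\in E_m} \P_{\pi_{1/2}}\!\big[e \xleftrightarrow{G_m} \{0\}\times[0,m]\big]
\;\le\; (2m+1)\sum_{k=0}^{m} \P_{\pi_{1/2}}\!\big[0 \xleftrightarrow{\Z^2} \partial[-k,k]^2\big],
\]
and the polynomial decay of the one-arm probability (Grimmett, Thm.~11.89) gives $\lesssim m^{2-c}$. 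This uses only the one-arm estimate, not the two-arm or any revealment bound; your interface approach would also work but is overkill, and ``revealment combined with Markov'' is not the right description (summing per-edge query probabilities directly gives the expected number of queries, no Markov needed). For part~(i) the paper simply cites Garban--Steif for both bounds; your sketch via the four-arm probability (upper bound from $\alpha_4\cdot\alpha_1\lesssim\alpha_5\asymp R^{-2}$ and the polynomial lower bound on $\alpha_1$, lower bound from RSW) is a correct way to unpack those citations.
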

	
	\begin{proof}
		For (i), we refer to Lemma 6.5 in \cite{Garban2014} for the lower bound and to  Proposition 6.6 in \cite{Garban2014} for the upper bound. 
		
		For the upper bound in (ii), we make use of a standard algorithm that determines $f_m$ by exploring the cluster of each of the points on the left side. We note that an edge $e \in E_m$ is queried by this algorithm before determining $f_m$ only if there is an open path in $G_m$ from one of the endpoints of $e$ to the left side of the square. Therefore,	
		\begin{align}
			a(f_m,\pi_{1/2}) &\le \sum_{e \in E_m} \P_{x \sim \pi_{1/2}}\big[e \xleftrightarrow{\; G_m\; } \{0\}\times[0,m]\big] \\
			& \le (2m+1) \cdot \sum_{k=0}^{m}  \P_{x \sim \pi_{1/2}}\left[(0,0) \xleftrightarrow{\;\Z^2\;} \partial\left([k,k]^2\right) \right],
		\end{align}
		where the k-th summand accounts for the edges with minimal $x$-coordinate $k$.
		By Theorem 11.89 in \cite{Grimmett1999}, there are constants $c,C \in (0,\infty)$ such that for every $k\ge 1$, 
		\begin{equation}
			\P_{x \sim \pi_{1/2}}\left[(0,0) \xleftrightarrow{\; \Z^2\; } \partial\left([-k,k]^2\right) \right] \le C k^{-c}. 
		\end{equation}
		From this, the upper bound in (ii) follows.
		
		The equality  $b(f_m,\pi_{1/2}) = w(f_m,\pi_{1/2})$ in (ii) follows from Proposition \ref{prop:witness-block-equality}, and all the other inequalities,  except the first, follow from the hierarchy (see Proposition \ref{prop:hierarchy-dist}). Finally, the fact that $w(f_m,\pi_{1/2})] \gtrsim m^{1+c}$ is stated in (1.1) in \cite{Damron2017}.
	\end{proof}
	
	Critical percolation on the square lattice is conjectured to be conformally invariant, which would yield exact values for the so-called arm exponents. Using these conjectured values, we arrive at the following conjecture. We note that in the case of critical site percolation on the triangular lattice, conformal invariance is known, and hence so are the arm exponents \cite{Smirnov2001}. 
	\begin{conjecture} \label{conjecture-critical-percolation} ~
		\begin{itemize}
			\item[(i)] $s(f_m,\pi_{1/2}) = m^{3/4 + o(1)}$,
			\item[(ii)] $w(f_m,\pi_{1/2})  \le m^{4/3 + o(1)}$
			\item[(iii)] $\ell(f_m,\pi_{1/2}) \ge m^{3/2 + o(1)}$,
			\item[(iv)] $a(f_m,\pi_{1/2}) \le  m^{7/4 + o(1)}$.
		\end{itemize}	
	\end{conjecture}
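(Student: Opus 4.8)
The plan is to derive all four estimates from the conjectured conformal invariance of critical percolation on $\Z^2$, i.e.\ from the values of the arm exponents it predicts; since these are rigorously known for critical site percolation on the triangular lattice \cite{Smirnov2001}, the same argument would give unconditional statements there. Write $\pi_j(r)$ for the $\pi_{1/2}$-probability of the polychromatic $j$-arm event from a vertex to distance $r$ (alternating open and closed arms). I would use the conjectured asymptotics $\pi_j(r) = r^{-(j^2-1)/12 + o(1)}$ for $j \ge 2$ (so $\pi_2(r) = r^{-1/4+o(1)}$, $\pi_3(r) = r^{-2/3+o(1)}$, $\pi_4(r) = r^{-5/4+o(1)}$), together with quasi-multiplicativity of arm probabilities and the standard fact that, after summing over all $\asymp m^2$ edges of $G_m$, edges within distance $r$ of $\partial G_m$ contribute a lower-order amount (being governed by the larger half-plane and corner exponents); see \cite{Grimmett1999,Garban2014}. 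For (i) I would write $s(f_m,\pi_{1/2}) = \sum_{e \in E_m} \P_{\omega \sim \pi_{1/2}}[e\text{ pivotal for }f_m]$ and note that for a bulk edge $e$, pivotality coincides, up to an $O(1)$ localization, with the four-arm event to distance $\asymp m$ (two open arms to the sides $A,B$, two closed dual arms to the top and bottom), of probability $\pi_4(m)^{1+o(1)} = m^{-5/4+o(1)}$; summing over the $\asymp m^2$ edges gives $s(f_m,\pi_{1/2}) = m^{3/4+o(1)}$.

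For (ii) I would use Proposition \ref{prop:witness-block-equality} to reduce to $b(f_m,\pi_{1/2}) = w(f_m,\pi_{1/2})$, and recall from its proof that on $\{f_m=1\}$ the quantity $w_{f_m}(\omega)$ is the length of a shortest open left--right crossing, hence at most the length of the \emph{lowest} open crossing, while on $\{f_m=0\}$ it is the size of a minimum closed dual $(A,B)$-cutset, which by planar duality and Menger's theorem is the length of a shortest closed dual top--bottom crossing, again at most the lowest such. The key estimate is that a bulk edge lies on the lowest open crossing only if it satisfies a polychromatic three-arm event to distance $\asymp m$ (two open arms to the sides, one closed dual arm downward), so $\E[\text{length of lowest crossing}] \le m^2\, \pi_3(m)^{1+o(1)} = m^{4/3+o(1)}$; the analogous bound for the lowest closed dual crossing handles $\{f_m=0\}$, and hence $w(f_m,\pi_{1/2}) \le m^{4/3+o(1)}$. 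Part (iii) would then be immediate from part (i) and the O'Donnell-Servedio inequality (Theorem \ref{thm:odonnell-servedio}), which gives $\ell(f_m,\pi_{1/2}) \ge s(f_m,\pi_{1/2})^2 = m^{3/2+o(1)}$.

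For (iv) the plan is to exhibit an algorithm that follows the percolation exploration interface. To decide whether $A$ is connected to $B$, I would start at the corner of $G_m$ between $A$ and the bottom side and follow the interface separating the open cluster of $A$ from the closed dual cluster of the bottom side, querying at each step the single edge that dictates its next move. This interface is a self-avoiding curve terminating at a corner of $G_m$, and which corner it reaches determines $f_m$ (by planar duality, $f_m = 1$ iff there is no closed dual top--bottom crossing). Since every queried edge lies within $O(1)$ of the interface, and an edge $e$ at distance $r$ from $\partial G_m$ can be that close only if it satisfies a polychromatic two-arm event to distance $\asymp r$, I would bound $a(f_m,\pi_{1/2}) \le \sum_{e \in E_m} \pi_2(\mathrm{dist}(e,\partial G_m))^{1+o(1)}$, a sum dominated by the $\asymp m^2$ bulk edges and hence equal to $m^2\, \pi_2(m)^{1+o(1)} = m^{7/4+o(1)}$.

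The main obstacle, beyond the fact that the whole statement is conditional on the arm exponents, lies in making (ii) and (iv) rigorous: for (ii) one must pin down the combinatorial characterization of the lowest open crossing (and of a minimum dual cutset) and carefully show that edges near $\partial G_m$ do not dominate the sum; for (iv) one must verify both that the interface-following rule genuinely determines $f_m$ and that its set of queried edges is comparable in size to the interface, so that the expected query count is $\asymp m^2 \pi_2(m)$. By contrast, (i) and (iii) are routine. I would also flag that (ii) and (iv) yield only one-sided bounds: matching lower bounds on $w(f_m,\pi_{1/2})$ (Proposition \ref{prop:critical-percolation} gives only $\gtrsim m^{1+c}$ with a small $c$) and on $a(f_m,\pi_{1/2})$, as well as any non-trivial upper bound on $\ell(f_m,\pi_{1/2})$, remain open; the role of the conjecture is to fix the numerical exponents, and for critical site percolation on the triangular lattice the four bounds as stated would in fact be theorems.
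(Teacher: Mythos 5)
Your proposal matches the paper's own justification of this conjecture essentially step for step: the paper likewise derives (i) from the conjectured four-arm exponent $5/4$ summed over the $\asymp m^2$ edges, (ii) from bounding $w_{f_m}$ by the length of the lowest crossing via the three-arm exponent $2/3$, (iii) immediately from (i) via the O'Donnell--Servedio inequality (in its local-witness form), and (iv) from an interface-following algorithm governed by the two-arm exponent $1/4$. Your additional caveats (treating the $f_m=0$ case by duality, boundary-edge contributions, and the unconditional validity on the triangular lattice) are consistent with, and slightly more detailed than, the paper's brief commentary.
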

	We now comment on this conjecture: For (i), the probability of being pivotal is essentially $m^{-\alpha_4}$ where the four-arm exponent $\alpha_4$ is conjectured to be $5/4$. Using the inequality of O'Donnell-Servedio (see Theorem \ref{thm:odonnell-servedio}), (iii) follows immediately from (i). For (ii), one first notes that $w_{f_m}(\omega)$ is at most the length of the lowest crossing (when $f_m(\omega) =1$). Since an edge being on the lowest crossing entails a three-arm event starting at that edge and since the three-arm exponent is conjectured to be $2/3$, the result would follow. For (iv), one makes use of an algorithm that ``follows the interface''. Being queried then corresponds to the two-arm exponent which is conjectured to be $1/4$ (see \cite[Theorem~8.4]{Garban2014}).
	
	Non-rigorous arguments suggest that $w(f_m,\pi_{1/2})$ behaves like $m^{1.130\cdots}$ and $a(f_m,\pi_{1/2})$ behaves like $m^\alpha$ with $\alpha \in [1.5,1.6]$ (see \cite{Peres2007}). Finding the exact exponents in these two cases are well-known open problems.

\end{example}

\section{Asymptotic separations}\label{sec:asymp-separation}

In Section \ref{sec:defs-hierarchy}, we have seen examples showing that the five complexity measures are distinct, both in the deterministic and distributional case. In this section, we study asymptotic separations.

\subsection{Review of the deterministic case}\label{subsec:asymp-separation-det}

Asymptotic separations are well-studied in the deterministic case and we refer the reader to the recent paper \cite{Aaronson2021} (see, in particular, Table 1) for an excellent overview. We summarize a small fraction of the known results in the following theorem.
\begin{theorem}\label{thm:asymp-sep-det}
	There exist sequences of Boolean functions $(f^{(i)}_n)_{n \ge 1}$, $1 \le i\le 4,$ such that asymptotically as $n \to \infty$, 
	\begin{equation}
		\frac{a_D(f^{(1)}_n)}{sc_D(f^{(1)}_n)} \to \infty,\quad  \frac{sc_D(f^{(2)}_n)}{w_D(f^{(2)}_n)} \to \infty,\quad  \frac{w_D(f^{(3)}_n)}{b_D(f^{(3)}_n)} \to \infty, \ \textrm{and}\quad  \frac{b_D(f^{(4)}_n)}{s_D(f^{(4)}_n)} \to \infty.
	\end{equation}
\end{theorem}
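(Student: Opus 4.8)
The plan is to establish the four separations independently, and to reduce each one to bounding a single family. Since by Proposition~\ref{prop:hierarchy-det} each consecutive pair of complexity measures is ordered \emph{pointwise} in $x$, in every case it suffices to produce a sequence of functions on which we can lower-bound the larger measure and upper-bound the smaller measure. Two of the four separations are already essentially contained in examples of this paper; the other two I would simply quote from the literature. For $sc_D/w_D\to\infty$, take $f^{(2)}_n$ to be the TRIBES function with $\ell=m=\lfloor\sqrt n\rfloor$ tribes of size $\lfloor\sqrt n\rfloor$, as in Example~\ref{ex:tribes-function}; there we have already recorded $sc_D(f^{(2)}_n)=n$ (via the Rivest--Vuillemin odd-weight criterion, using that TRIBES has an odd number of $1$-inputs) and $w_D(f^{(2)}_n)=\max\{\ell,m\}=\lfloor\sqrt n\rfloor$, so the ratio is $\Theta(\sqrt n)$.

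For $b_D/s_D\to\infty$ I would use the classical quadratic sensitivity--block-sensitivity separation (recorded, with references, in Table~1 of \cite{Aaronson2021}), realized concretely as follows. On $n=2m^2$ bits, view the input as $m$ blocks of $2m$ bits, call a block \emph{good} if it contains exactly two $1$'s and they occupy a consecutive aligned pair $\{2i-1,2i\}$, and set $f^{(4)}_n(x)=1$ iff at least one block is good. At the all-zeros input the $m^2$ pairwise disjoint blocks $\{2i-1,2i\}$ lying inside block $j$, for $i,j\in[m]$, are all sensitive, so $b_D(f^{(4)}_n)\ge m^2=\Theta(n)$. Conversely, a short case analysis of single-bit flips shows that at any input each block can contribute only $O(1)$ pivotal bits unless it is the \emph{unique} good block, in which case precisely its $2m$ bits are pivotal and no others are; hence $s_D(f^{(4)}_n)=\Theta(m)=\Theta(\sqrt n)$, and the ratio is again $\Theta(\sqrt n)$.

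For the two remaining separations I would invoke known constructions. The separation $a_D/sc_D\to\infty$, i.e.\ decision-tree complexity versus subcube partition complexity, is exactly the main result of \cite{Kothari2015}, which produces a family with a polynomial gap $a_D\ge sc_D^{\,1+c}$ for some absolute $c>0$; we take $f^{(1)}_n$ to be that family. The separation $w_D/b_D\to\infty$, i.e.\ certificate complexity versus block sensitivity, is likewise a known polynomial separation listed in Table~1 of \cite{Aaronson2021}, from which $f^{(3)}_n$ may be taken. I expect these last two to be the real substance of the theorem: the naive candidates genuinely fail. For instance, Nisan's function from Example~\ref{ex:Nisan-function} has $w_D(g)=n-1$ but $b_D(g)=\max\{3n/4,n/2+2\}$, only a constant factor apart, and the natural ``threshold-band'' generalizations do no better, because an input of Hamming weight just outside the accepting band always has $\Omega(n)$ pairwise disjoint single-bit sensitive blocks, pinning $b_D$ at $\Omega(n)$; and $a_D$ versus $sc_D$ was a well-known open problem until \cite{Kothari2015}. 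So once the cited families are in hand, the only remaining work is the routine verification that they indeed give an unbounded ratio in the asserted direction.
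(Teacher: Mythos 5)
Your proposal is correct and follows essentially the same route as the paper: the paper likewise handles the second separation via the TRIBES function (with $sc_D=n$ from the Rivest--Vuillemin odd-weight criterion and $w_D=\max\{\ell,m\}$) and disposes of the other three by citation, namely \cite{Ambainis2015,Kothari2015} for $a_D$ vs.\ $sc_D$, \cite{Gilmer2013} for $w_D$ vs.\ $b_D$, and \cite{Rubinstein1995} for $b_D$ vs.\ $s_D$. The only difference is that you spell out Rubinstein's construction and its $\Theta(\sqrt n)$ vs.\ $\Theta(n)$ verification explicitly rather than citing it, which is correct and self-contained but not a different argument.
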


We refer to \cite{Ambainis2015,Kothari2015} for the first, to \cite{Gilmer2013} for the third, and to \cite{Rubinstein1995} for the fourth separation. For the second, the TRIBES function (see Example \ref{ex:tribes-function}) with $\ell$ tribes of size $m$ provides such an example assuming $\min\{\ell,m\} \to \infty$ since $w_D(\textrm{TRIBES}_{\ell,m})=\max\{\ell,m\}$ and $sc_D(\textrm{TRIBES}_{\ell,m})= n = \ell \cdot m$.

\subsection{Asymptotic separation for distributional complexities}\label{subsec:asymp-separation-dist}

The following theorem yields a first general asymptotic separation \cite{ODonnell2007}. 

\begin{theorem}[O'Donnell-Servedio]\label{thm:odonnell-servedio}
	For every monotone Boolean function $f$ and for every $p \in [0,1]$,
	\begin{equation}
		sc(f,\pi_{p}) \ge 4p(1-p) \cdot \big(s(f,\pi_{p})\big)^2\ .
	\end{equation}
\end{theorem}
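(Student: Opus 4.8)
The plan is to adapt the $p$-biased Fourier argument of O'Donnell and Servedio \cite{ODonnell2007}. We may assume $p\in(0,1)$, since otherwise $4p(1-p)=0$ and there is nothing to prove. Introduce the normalized degree-one characters $\phi_i(x):=(x_i-p)/\sqrt{p(1-p)}$, which under $\pi_p$ have mean $0$ and variance $1$, and write $\widehat g(\{i\}):=\mathbb E_{x\sim\pi_p}[g(x)\phi_i(x)]$. The first ingredient is the standard identity, valid because $f$ is monotone, that $\widehat f(\{i\})=\sqrt{p(1-p)}\cdot\P_{x\sim\pi_p}[i\ \textrm{pivotal for}\ f]$: indeed $\mathbb E[f\phi_i\mid (x_j)_{j\neq i}]=\sqrt{p(1-p)}\,(f(x^{i\to 1})-f(x^{i\to 0}))$, and for monotone $f$ this difference is the indicator that $i$ is pivotal. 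Summing over $i$ gives $\sum_i\widehat f(\{i\})=\sqrt{p(1-p)}\cdot s(f,\pi_p)$.

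Next, fix any subcube partition $\mathcal C\sim f$, set $g_i(x):=\mathbf{1}[i\in I_{C(x)}]$, and put $h(x):=\sum_{i\in I_{C(x)}}\phi_i(x)$. Conditionally on $C(x)=C$ with $i\notin I_C$, the function $f$ is constant on $C$ while $x_i$ is still $\pi_p$-distributed, so the mean-zero $\phi_i$ kills the contribution of $\{i\notin I_{C(x)}\}$ to $\mathbb E[f\phi_i]$; hence $\widehat f(\{i\})=\mathbb E[f(x)\phi_i(x)g_i(x)]$ and therefore $\sqrt{p(1-p)}\,s(f,\pi_p)=\mathbb E_{x\sim\pi_p}[f(x)h(x)]$. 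Passing to $F:=2f-1\in\{-1,1\}$ and using $\mathbb E[h]=0$ (each $\phi_i$ is independent of $g_i$ and has mean $0$; see fact (a) below), the right-hand side equals $\tfrac12\mathbb E[Fh]\le\tfrac12\|F\|_2\|h\|_2=\tfrac12\sqrt{\mathbb E[h^2]}$ by Cauchy--Schwarz, and it is precisely the passage to $F$ (so that $\|F\|_2=1$ exactly) that yields the constant $4$ rather than $1$. It thus remains to prove $\mathbb E[h^2]\le\mathbb E_{x\sim\pi_p}[|I_{C(x)}|]$, for then $p(1-p)\,s(f,\pi_p)^2\le\tfrac14\mathbb E_{x\sim\pi_p}[|I_{C(x)}|]$ and minimizing over $\mathcal C\sim f$ gives $sc(f,\pi_p)\ge 4p(1-p)\,s(f,\pi_p)^2$.

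The heart of the matter, which I expect to be the main obstacle, is the identity $\mathbb E[h^2]=\mathbb E_{x\sim\pi_p}[|I_{C(x)}|]$; this is a genuine feature of subcube partitions (for decision trees one would instead obtain it from a martingale with orthogonal increments, which is why the statement for $sc$, not just $a$, requires a separate argument). Expand $\mathbb E[h^2]=\sum_{i,j}\mathbb E[\phi_i\phi_j g_ig_j]$ and use two elementary combinatorial facts about subcube partitions: (a) $g_i$ does not depend on $x_i$, since a subcube not fixing bit $i$ contains both $x$ and $x^i$, so $i\in I_{C(x)}$ iff $i\in I_{C(x^i)}$; and (b) for $i\neq j$, after freezing all coordinates other than $i$ and $j$, in the induced partition of $\{0,1\}^{\{i,j\}}$ the function $g_i$ (which by (a) is a function of $x_j$ alone) cannot be non-constant unless $g_j\equiv 1$ there --- one checks this by noting that if bit $i$ is unfixed for one value of $x_j$ then the unique subcube involved forces bit $j$ to be fixed on all four configurations. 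By (a) the diagonal terms contribute $\sum_i\mathbb E[\phi_i^2]\,\mathbb E[g_i]=\sum_i\P[i\in I_{C(x)}]=\mathbb E[|I_{C(x)}|]$; by (a) and (b), conditioning on the frozen coordinates factors each off-diagonal term ($i\neq j$) as $\mathbb E_{x_i}[\phi_i g_j]\cdot\mathbb E_{x_j}[\phi_j g_i]$, in which either $g_i$ is constant in $x_j$ (making the second factor a constant times $\mathbb E[\phi_j]=0$) or $g_j\equiv 1$ (making the first factor $\mathbb E[\phi_i]=0$); either way the term vanishes. This establishes the identity, and the remaining edge cases (constant $f$, or bits on which $f$ does not depend, which contribute $0$ to both sides) are routine.
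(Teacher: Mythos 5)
Your proof is correct. Note that the paper itself does not prove this statement: it cites \cite{ODonnell2007} and remarks that the extension to $\ell(f,\pi_p)$ follows by adapting the argument in \cite[Thm.~8.8]{Garban2014}, where the inequality is established for query algorithms via a martingale with orthogonal increments. Your route is the same Fourier/Cauchy--Schwarz scheme as in those sources: the monotonicity identity $\widehat f(\{i\})=\sqrt{p(1-p)}\,\P[i\ \text{pivotal}]$, the reduction of $\E[fh]$ to the fixed coordinates, normalizing to $F=2f-1$ to get the constant $4$, and Cauchy--Schwarz. What you add, and what is genuinely needed here since a subcube partition need not arise from an algorithm, is the direct combinatorial verification that $\E[h^2]=\E[\abs{I_{C(x)}}]$: fact (a) (that $g_i$ does not depend on $x_i$) and fact (b) (that on any two-dimensional slice, $g_i$ non-constant in $x_j$ forces $g_j\equiv 1$) are both correct as you argue them, and together with conditional independence they make every off-diagonal term $\E[\phi_i\phi_j g_i g_j]$ factor into something containing a mean-zero $\phi$, hence vanish. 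This replaces the orthogonal-increment argument available for decision trees and is in the same spirit as the locality of $\mathcal I_{\mathcal C}$ used in the proof of Proposition \ref{prop:hierarchy-dist}; in fact your slice argument is a clean, self-contained substitute for it, and it yields the stronger equality $\E[h^2]=\E[\abs{I_{C(x)}}]$ rather than just the inequality you need.
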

In fact, the same inequality holds with the left-hand side replaced by $\ell(f,\pi_{p})$. The easiest way to see this is by adapting the proof given in \cite[Thm.~8.8]{Garban2014}.

Based on the examples studied in Subsection \ref{subsec:classical-examples}, we obtain the following result. 

\begin{theorem}\label{thm:asymp-sep-dist}
	There exist sequences of Boolean functions $(f^{(i)}_n)_{n \ge 1}$, $1 \le i\le 3,$ such that asymptotically as $n \to \infty$, 
	\begin{equation}
		\frac{\ell(f^{(1)}_n,\pi_{1/2})}{w(f^{(1)}_n,\pi_{1/2})} \to \infty,\quad  \frac{w(f^{(2)}_n,\pi_{1/2})}{b(f^{(2)}_n,\pi_{1/2})} \to \infty, \ \textrm{and}\quad  \frac{b(f^{(3)}_n,\pi_{1/2})}{s(f^{(3)}_n,\pi_{1/2})} \to \infty.
	\end{equation}
\end{theorem}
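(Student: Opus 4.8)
The plan is to produce three separate sequences of Boolean functions, one for each asymptotic separation, drawing on the examples already computed in Subsection~\ref{subsec:classical-examples}. Since the three ratios involve different pairs of complexity measures, I would treat them independently rather than looking for a single family that works for all three.

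\textbf{Separation of $b$ from $s$.} For the third separation I would take $f^{(3)}_n = \mathrm{MAJ}_n$ (Example~\ref{ex:majority-n}), which is balanced at $p=1/2$. There we have $s(\mathrm{MAJ}_n,\pi_{1/2}) \sim \sqrt{2/\pi}\cdot\sqrt{n}$ while $b(\mathrm{MAJ}_n,\pi_{1/2}) \asymp \log(n)\cdot\sqrt{n}$, so the ratio grows like $\log n \to \infty$. Alternatively, iterated $3$-MAJORITY works: by Corollary~\ref{cor:composition-distributional}(i), $s(f^k,\pi_{1/2}) = (3/2)^k$, while $b(f^k,\pi_{1/2}) \ge w(f^k,\pi_{1/2})/\ldots$; actually more simply $b(f^k,\pi_{1/2}) \ge s(f^k,\pi_{1/2})$ is not enough, so I would stick with $\mathrm{MAJ}_n$ where the separation is explicit. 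This case is essentially immediate given Example~\ref{ex:majority-n}.

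\textbf{Separation of $w$ from $b$.} By Corollary~\ref{cor:percolation-functions}, percolation functions always have $w = b$, so percolation examples are useless here and one genuinely needs a non-percolation example. The natural candidate is $f^{(2)}_n = \mathrm{MAJ}_n$ again: $w(\mathrm{MAJ}_n,\pi_{1/2}) = (n+1)/2$ while $b(\mathrm{MAJ}_n,\pi_{1/2}) \asymp \log(n)\cdot\sqrt{n}$, so the ratio is of order $\sqrt{n}/\log n \to \infty$. Thus $\mathrm{MAJ}_n$ handles both the second and third separations simultaneously, and I would point this out. Again this follows directly from the formulas recorded in Example~\ref{ex:majority-n}.

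\textbf{Separation of $\ell$ from $w$.} This is the step I expect to be the main obstacle, because local witness complexity is hard to bound from below in general. The natural candidate is iterated $3$-MAJORITY, $f^{(1)}_n = f^k$ with $n = 3^k$ and $f=\mathrm{MAJ}_3$ (Example~\ref{ex:iterated-3-MAJ}). There $w(f^k,\pi_{1/2}) = 2^k$, and the chain of inequalities
\begin{equation}
	(5/2)^k \ge a(f^k,\pi_{1/2}) \ge sc(f^k,\pi_{1/2}) \ge \ell(f^k,\pi_{1/2}) \ge \big(s(f^k,\pi_{1/2})\big)^2 = (9/4)^k
\end{equation}
shows $\ell(f^k,\pi_{1/2}) \ge (9/4)^k$, using the O'Donnell--Servedio inequality (Theorem~\ref{thm:odonnell-servedio}, in the form applying to $\ell$) together with Corollary~\ref{cor:composition-distributional}(i) and (iii). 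Hence
\begin{equation}
	\frac{\ell(f^k,\pi_{1/2})}{w(f^k,\pi_{1/2})} \ge \frac{(9/4)^k}{2^k} = (9/8)^k \to \infty
\end{equation}
as $k\to\infty$. The delicate point is the lower bound on $\ell$: one cannot compute it directly, so the whole argument rests on the squared-sensitivity lower bound from O'Donnell--Servedio, and one must verify that the sensitivity $(3/2)^k$ genuinely dominates (after squaring) the witness complexity $2^k$, i.e.\ that $9/4 > 2$, which it is. I would also remark that alternatively the balanced TRIBES example (Example~\ref{ex:balanced-tribes-function}) could separate $w$ from $s$ but not $\ell$ from $w$, so iterated majority is the essential example here. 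Assembling these three choices completes the proof.
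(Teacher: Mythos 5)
Your proposal is correct and follows essentially the same route as the paper: the $\ell/w$ separation via iterated $3$-MAJORITY with the O'Donnell--Servedio bound $\ell(f^k,\pi_{1/2}) \ge \big(s(f^k,\pi_{1/2})\big)^2 = (9/4)^k$ against $w(f^k,\pi_{1/2})=2^k$, and the $w/b$ separation via $\mathrm{MAJ}_n$, are exactly the paper's arguments. The only deviation is the third separation, where you reuse $\mathrm{MAJ}_n$ (ratio $b/s \asymp \log n$) while the paper uses $\mathrm{TRIBES}_{2^m,m}$ (ratio $\asymp n/\log^2 n$); both are valid given the computations in Examples \ref{ex:majority-n} and \ref{ex:balanced-tribes-function}, your choice merely giving a slower divergence, which suffices for the statement.
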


The reader will immediately notice that asymptotic separations between distributional algorithmic complexity, subcube partition complexity and local witness complexity are missing. Before discussing the difficulties in asymptotically separating distributional algorithmic complexity and subcube partition complexity, we prove the three asymptotic separations. 

\begin{proof}
	\begin{enumerate}
		\item Consider iterated 3-MAJORITY $f^k$ on $n=3^k$ bits as defined in Example \ref{ex:iterated-3-MAJ}. Then we have
		\begin{equation}
			\frac{\ell(f^k,\pi_{1/2})}{w(f^k,\pi_{1/2})}  \ge (9/8)^k = n^{\log_3(9/8)} \to \infty \ \textrm{as} \ n \to \infty.
		\end{equation}
		\item Consider $\textrm{MAJ}_n$ on $n$ bits (with $n$ odd) as defined in Example \ref{ex:majority-n}. Then we have
		\begin{equation}
			\frac{w(\textrm{MAJ}_n,\pi_{1/2})}{b(\textrm{MAJ}_n,\pi_{1/2})} \asymp \frac{\sqrt{n}}{\log(n)} \to \infty, \ \textrm{as} \ n \to \infty.
		\end{equation}
		\item 
		Consider  $\textrm{TRIBES}_{2^m,m}$  with $2^m$ tribes of size $m$ as studied in Example \ref{ex:balanced-tribes-function}. Then we have 
		\begin{equation}
			\frac{b\left(\textrm{TRIBES}_{2^m,m},\pi_{1/2}\right)}{s\left(\textrm{TRIBES}_{2^m,m},\pi_{1/2}\right)} \asymp \frac{n}{\log(n)^2} \to \infty, \ \textrm{as} \ n \to \infty.
		\end{equation}
	\end{enumerate}
\end{proof}

We end this section with a proposed example for asymptotically separating distributional algorithmic complexity and distributional subcube partition complexity. Let $f$ be the ALL-EQUAL function on $n=3$ bits (see Example \ref{ex:constant-function}).
We want to iterate $f$ but since $f$ is not balanced, it makes things simpler to XOR $f$ with one bit so that it is balanced and hence all its iterates are balanced. In fact, we believe that in order for this example to have a chance of working, we need to take $f$ and XOR it with a large number of bits. Denote by $f\oplus \textrm{PAR}_k$ the function on $3+k$ bits which is $f$ XORed with $k$ bits. Clearly, when $p=1/2$, the  distributional algorithmic complexity and
distributional subcube partition complexity are $5/2+k$ and $9/4+k$ respectively. We now consider the sequence of Boolean functions $(f \oplus \textrm{PAR}_k)^\ell$ with $k$ fixed and indexed by $\ell$.  By Corollary \ref{cor:composition-distributional}, we know that the distributional subcube partition complexity of $(f\oplus \textrm{PAR}_k)^\ell$  is at most  $(9/4+k)^\ell$. The hope is that for $k$ large, the distributional algorithmic complexity of $(f\oplus \textrm{PAR}_k)^\ell$  is precisely $(5/2+k)^\ell$ for every $\ell \ge 1$. If this were the case, we would have
the desired separation. The above would correspond to the optimal algorithm for $(f\oplus \textrm{PAR}_k)^\ell$ just being the optimal algorithm for $(f\oplus \textrm{PAR}_k)^\ell$ iterated $\ell$ times. The reason this might be believable is that whenever one deviates from the latter, one has an initial cost of $k$ corresponding to looking at the $k$ added XOR bits which one always needs to do first. If $k$ is very large, this cost is too much and it is not worth deviating from the simple "iterative" algorithm.

This approach will naturally lead to Section \ref{sec:alg-comp-partial-info} where we introduce the notion of partial information revealment. The connection arises by viewing the querying of the $k$ parity bits of $f\oplus \textrm{PAR}_k$ as providing partial information on the value of $f\oplus \textrm{PAR}_k$. We will revisit this approach in Section \ref{sec:alg-comp-composition}.

\section{Polynomial relations}\label{sec:polynomial-relations}

In 2019, Huang proved the long-standing sensitivity conjecture \cite{Huang2019}, which, given previous results, implies that $a_D(f) \le s_D(f)^C$ for every Boolean function $f$. In other words, all deterministic complexity measures are polynomially related to each other. We first review the polynomial relations in the deterministic case and then move to the distributional setting.

\subsection{Review of the deterministic case}\label{subsec:polynomial-relations-det}

The five deterministic complexity measures discussed so far are actually part of a larger family studied in computational complexity theory. Let us introduce one additional complexity measure which was particularly important in the proof of the sensitivity conjecture. We denote by $\text{deg}(f)$ the \emph{degree} of the multilinear polynomial $p$ that represents $f$, i.e.\ $p(x) = f(x)$ for all $x \in \{0,1\}^n$. 

\begin{theorem}\label{thm:poly-rel-det}
	For every Boolean function $f$,
	\begin{equation}
		a_D(f) \overset{\text{(1)}}{\le} w_D(f) \cdot b_D(f), \ w_D(f) \overset{\text{(2)}}{\le} b_D(f) \cdot s_D(f), \ b_D(f) \overset{\text{(3)}}{\le} \text{deg}(f)^2 \ \textrm{and} \ \text{deg}(f)  \overset{\text{(4)}}{\le} s_D(f)^2.
	\end{equation}
	In particular, all these deterministic complexity measures are polynomially related.
\end{theorem}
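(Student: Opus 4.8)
The plan is to establish the four displayed inequalities $(1)$–$(4)$ separately and then chain them; $(1)$–$(3)$ are classical, while $(4)$ — equivalently $s_D(f)\ge\sqrt{\deg(f)}$ — is Huang's theorem and is where essentially all the difficulty sits. For $(2)$, I would fix $x$ and take a maximal family $B_1,\dots,B_m$ of pairwise disjoint \emph{minimal} blocks with $f(x^{B_i})\neq f(x)$, so $m\le b_D(f)$; minimality of $B_i$ makes every bit of $B_i$ pivotal for $f$ at $x^{B_i}$, hence $\abs{B_i}\le s_D(f)$, and $W:=\bigcup_i B_i$ is a witness set for $f$ and $x$, since otherwise some $y$ agreeing with $x$ on $W$ has $f(y)\neq f(x)$, making $\{i:x_i\neq y_i\}$ a block disjoint from $W$ that flips $f$ and contradicts maximality; taking the maximum over $x$ gives $w_D(f)\le b_D(f)\,s_D(f)$. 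For $(1)$, I would run the classical certificate algorithm: while the queried bits do not determine $f$, pick a consistent input $z$, a minimal witness set $W_z$ for $f$ and $z$, and query the not-yet-queried bits of $W_z$ (at most $w_D(f)$ per round). With $x$ the revealed input and $B_j:=\{i\in W_{z_j}:x_i\neq (z_j)_i\}$ for round $j$, the $B_j$ are pairwise disjoint (once $W_{z_j}$ has been queried, every later consistent input agrees with $x$ on it) and $x^{B_j}$ agrees with $z_j$ on $W_{z_j}$, so $f(x^{B_j})=f(z_j)$; choosing $z_j$ so that $f(z_j)=1-f(x)$ makes the $B_j$ genuine disjoint sensitive blocks at $x$, bounding the number of rounds by $b_D(f)$, and converting rounds to queries yields $(1)$ (not knowing $1-f(x)$ in advance costs a harmless factor $2$ via two parallel searches, removable with more care).

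\emph{Inequality $(3)$, $b_D(f)\le\deg(f)^2$.} This is Nisan--Szegedy. Let $b=b_D(f)$ be attained at $x$ with disjoint blocks $B_1,\dots,B_b$, and set $g(y):=f\big(x^{\bigcup_{i:\,y_i=1}B_i}\big)$ on $\{0,1\}^b$. Substituting affine functions of $y_1,\dots,y_b$ into the multilinear form of $f$ gives $\deg(g)\le\deg(f)$, and (after negating if needed) $g(\vec0)=0$ while $g(e_i)=1$ for every $i$. Symmetrizing $g$ over $S_b$ produces a univariate polynomial $q$ with $\deg(q)\le\deg(g)$, $q(0)=0$, $q(1)=1$, and $q(k)\in[0,1]$ for all integers $0\le k\le b$; since $q$ rises from $0$ to $1$ over the unit interval, its derivative exceeds $1$ somewhere in $[0,b]$, and the Markov-type inequality for polynomials controlled at these $b+1$ integer points forces $\deg(q)=\Omega(\sqrt{b})$, hence $(3)$.

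\emph{Inequality $(4)$, $\deg(f)\le s_D(f)^2$ — the main obstacle.} First reduce to $\deg(f)=n$: fixing a maximum-degree monomial $x_S$, $\abs{S}=d:=\deg(f)$, in the multilinear expansion of $f$ and restricting all variables outside $S$ to $0$ leaves the coefficient of $x_S$ unchanged, so the resulting $g$ on $d$ bits has $\deg(g)=d$ and $s_D(g)\le s_D(f)$. By the Gotsman--Linial identity, $\deg(g)=d$ is equivalent to $\sum_y(-1)^{g(y)+y_1+\cdots+y_d}\neq 0$, so exactly one of $H:=\{y:g(y)\neq y_1\oplus\cdots\oplus y_d\}$ and its complement has more than $2^{d-1}$ elements; relabel so $\abs{H}>2^{d-1}$. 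Every hypercube edge with both endpoints in $H$ joins $y$ to some $y^i$ with $g(y^i)\neq g(y)$, so $\Delta\big(Q_d[H]\big)\le s_D(g)$. In the other direction, Huang's theorem: the signed adjacency matrix $A_d$ of $Q_d$, given recursively by $A_1=\left(\begin{smallmatrix}0&1\\1&0\end{smallmatrix}\right)$ and $A_d=\left(\begin{smallmatrix}A_{d-1}&I\\I&-A_{d-1}\end{smallmatrix}\right)$, satisfies $A_d^2=dI$ and hence has $\sqrt d$ as an eigenvalue of multiplicity $2^{d-1}$; Cauchy interlacing applied to the principal submatrix $A_d[H]$ of order $>2^{d-1}$ gives $\lambda_{\max}(A_d[H])\ge\sqrt d$, while $\lambda_{\max}(A_d[H])$ is at most the largest absolute row sum of $A_d[H]$, i.e.\ at most $\Delta(Q_d[H])$. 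Together these give $s_D(f)\ge s_D(g)\ge\Delta(Q_d[H])\ge\sqrt d=\sqrt{\deg(f)}$.

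\emph{Assembling the relations.} Chaining $(1)$--$(4)$,
\[
a_D(f)\ \overset{(1)}{\le}\ w_D(f)\,b_D(f)\ \overset{(2)}{\le}\ b_D(f)^2\,s_D(f)\ \overset{(3)}{\le}\ \deg(f)^4\,s_D(f)\ \overset{(4)}{\le}\ s_D(f)^{9},
\]
and Proposition \ref{prop:hierarchy-det} together with $(3)$ and $(4)$ also gives $\sqrt{s_D(f)}\le\sqrt{b_D(f)}\le\deg(f)\le s_D(f)^2$. Since Proposition \ref{prop:hierarchy-det} sandwiches $b_D(f),w_D(f),sc_D(f)$ between $s_D(f)$ and $a_D(f)$, all six measures lie between $s_D(f)$ and $s_D(f)^{9}$, so they are pairwise polynomially related. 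The genuinely hard ingredient is $(4)$: $(1)$--$(3)$ are 1990s-vintage combinatorics plus one use of Markov's inequality, whereas $(4)$ is precisely the sensitivity conjecture, settled only in 2019 by the spectral argument above.
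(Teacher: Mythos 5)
The paper does not actually prove Theorem \ref{thm:poly-rel-det}: it states the four inequalities and cites \cite{Beals2001} for (1), \cite{Nisan1989} for (2), \cite{Nisan1989,Tal2013} for (3) and \cite{Huang2019} for (4). Your proposal reconstructs precisely those standard arguments, and in substance it is correct: the maximal-family-of-minimal-blocks argument for (2), the certificate-querying algorithm for (1), block restriction plus symmetrization plus an Ehlich--Zeller/Markov-type bound for (3), and the reduction to a full-degree restriction, the Gotsman--Linial parity set $H$ with $\abs{H}>2^{d-1}$, and Huang's signed adjacency matrix with $A_d^2=dI$, Cauchy interlacing and the row-sum bound for (4) are exactly the proofs in the cited references; your chain $a_D(f)\le s_D(f)^9$ together with Proposition \ref{prop:hierarchy-det} (and $\sqrt{b_D(f)}\le \text{deg}(f)\le s_D(f)^2$) correctly gives the polynomial relatedness of all the measures, including the degree.

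Two caveats, neither fatal. First, your rendition of (1) defines the sensitive blocks relative to the revealed input $x$, so the algorithm would need to know $f(x)$ in advance; you patch this with two parallel searches at the cost of a factor $2$ and assert it is removable. It is, but the removal is exactly the classical argument: always query witness sets of \emph{consistent inputs with value} $1$, and run the disjoint-block analysis not against the revealed input but against an arbitrary still-consistent input $y$ with $f(y)=0$; if after $b_D(f)$ rounds such a $y$ and a consistent $1$-witness both existed, one would exhibit $b_D(f)+1$ disjoint sensitive blocks at $y$. This yields $a_D(f)\le w_D(f)\cdot b_D(f)$ with no factor $2$. Second, the symmetrization-plus-Markov sketch for (3) gives the Nisan--Szegedy bound $b_D(f)\le 2\,\text{deg}(f)^2$ (or $O(\text{deg}(f)^2)$ as you state it), whereas the displayed inequality is the sharp $b_D(f)\le \text{deg}(f)^2$, which, as the paper notes, requires Tal's refinement. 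This is immaterial for the ``in particular'' conclusion, but as written your argument proves a slightly weaker form of (3) than the one displayed.
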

Inequality (1) was proven in \cite{Beals2001}, (2) in \cite{Nisan1989}, (3) in \cite{Nisan1989} with an extra factor $2$ and then improved to the above in \cite{Tal2013}, and (4) in \cite{Huang2019}. We refer to the survey \cite{Buhrman2002} and to  \cite{Aaronson2021} for more background.

Moreover, Nisan showed equality of witness complexity and sensitivity for monotone Boolean functions \cite[Proposition 2.2]{Nisan1989}.
\begin{proposition}\label{prop:monotone-fct-det}
	For every monotone Boolean function $f$,
	\begin{equation}
		w_D(f) = b_D(f)=s_D(f).
	\end{equation}%
\end{proposition}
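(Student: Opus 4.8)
The plan is to show the chain $w_D(f) \le s_D(f)$ for monotone $f$, which combined with the universally valid inequalities $w_D(f) \ge b_D(f) \ge s_D(f)$ from Proposition \ref{prop:hierarchy-det} forces all three quantities to coincide. Thus the entire content is the single inequality $w_D(f) \le s_D(f)$, and everything hinges on exploiting monotonicity.

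First I would recall the structure of witnesses for a monotone function. If $f(x) = 1$, then monotonicity implies that a minimal witness set for $x$ can be taken to consist only of coordinates $i$ with $x_i = 1$: indeed, the set $\{i : x_i = 1\}$ is a witness (flipping any subset of the $0$-coordinates up keeps the value $1$), and one can prune it down to a minimal sub-witness $W \subseteq \{i : x_i = 1\}$. Symmetrically, if $f(x) = 0$, a minimal witness can be taken inside $\{i : x_i = 0\}$. So fix $x$ achieving the maximum in the definition of $w_D(f)$, say $f(x) = 1$ (the case $f(x) = 0$ is dual), and let $W \subseteq \{i : x_i = 1\}$ be a minimal witness for $f$ at $x$, so $|W| = w_f(x) = w_D(f)$.

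The key step is then: construct from $x$ and $W$ an input $y$ with $s_f(y) \ge |W|$, which gives $s_D(f) \ge |W| = w_D(f)$ and finishes the proof. The natural candidate is $y := x\vert_W$ extended by zeros, i.e.\ $y_i = x_i = 1$ for $i \in W$ and $y_i = 0$ for $i \notin W$. By monotonicity and the fact that $W$ is a witness for the value $1$, we have $f(y) = 1$. Now I claim every $i \in W$ is pivotal at $y$: consider $y^i$, which agrees with $y$ except $y^i_i = 0$. Since $W$ is a \emph{minimal} witness, $W \setminus \{i\}$ is not a witness, so there is some extension of the assignment $x\vert_{W \setminus \{i\}}$ on which $f$ takes value $0$; by monotonicity (pushing all free coordinates down) the extension that sets everything outside $W \setminus \{i\}$ to $0$ — which is precisely $y^i$ — must then have $f(y^i) = 0 \neq 1 = f(y)$. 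Hence all $|W|$ coordinates of $W$ are pivotal at $y$, so $s_f(y) \ge |W|$ and therefore $s_D(f) \ge w_D(f)$.

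The main obstacle, and the only place care is needed, is the monotonicity bookkeeping in that last step: one must be sure that ``$W\setminus\{i\}$ is not a witness'' really produces a value-$0$ input of the specific form $y^i$ rather than some other extension, and this is exactly where pushing free coordinates down via monotonicity is essential (without monotonicity the claim is false, e.g.\ for PARITY). Once this is set up cleanly the dual case $f(x)=0$ follows by the symmetric argument with $0$s and $1$s interchanged and monotonicity used in the opposite direction, and the proof is complete. I would also remark that this argument in fact shows the pointwise-type statement that for monotone $f$ the worst-case witness size is attained and is witnessed by a sensitivity lower bound, which is all Nisan needs.
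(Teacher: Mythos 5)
Your proof is correct: the construction of $y$ from an inclusion-minimal witness contained in the $1$-coordinates (resp.\ $0$-coordinates), with monotonicity used to push the free coordinates down (resp.\ up) so that minimality forces every coordinate of $W$ to be pivotal at $y$, gives $s_D(f)\ge w_D(f)$, which together with the general chain $w_D(f)\ge b_D(f)\ge s_D(f)$ yields the proposition. The paper itself offers no proof and simply cites Nisan, and your argument is exactly the standard one from that source (note only that your parenthetical claim $\abs{W}=w_f(x)$ for an inclusion-minimal $W$ is unnecessary and not automatic, but harmless, since $\abs{W}\ge w_f(x)$ is all the inequality needs).
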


\subsection{Polynomial relations for distributional complexities}\label{subsec:polynomial-relations-dist}

We begin by considering the AND-function on $n$ bits from  Example \ref{ex:or-function} whose distributional sensitivity and block sensitivity are given by 
\begin{equation}
	s(\textrm{AND}_n,\pi_p) = n \cdot p^{n-1}, \quad \text{and}\quad b(\textrm{AND}_n,\pi_p) = 1 + (n-1) \cdot p^{n} \ge 1.
\end{equation}
Hence, for any $p \in (0,1)$, we do not have a polynomial relation between $s(\textrm{AND}_n,\pi_p)$ and $b(\textrm{AND}_n,\pi_p)$. However, note that for any $p \in (0,1)$, $\textrm{Var}_p(\textrm{AND}_n) \to 0$ as $n \to \infty$.

Our goal is thus to obtain polynomial relations for Boolean functions with variances bounded away from 0.
\begin{question}
	Fix $c>0$. Are there any polynomial relations between
	\begin{equation}
		a(f,\pi_p), \quad 	sc(f,\pi_p), \quad \ell(f,\pi_p), \quad 	w(f,\pi_p), \quad 	b(f,\pi_p), \quad \text{and} \quad	s(f,\pi_p)
	\end{equation}
	for general Boolean functions $f$ satisfying $\textrm{Var}_p(f) \ge c$? 
\end{question}
We are also interested in such relations for restricted classes of Boolean functions, for example when restricting to monotone, transitive, or symmetric Boolean functions. 

A first negative answer to this question is given by the function $\textrm{TRIBES}_{2^m,m}$ with $2^m$ tribes of size $m$ (see Example \ref{ex:balanced-tribes-function}). It is transitive, monotone, and for $p=1/2$, 
\begin{equation}
	\textrm{Var}_{1/2}\big(\textrm{TRIBES}_{2^m,m}\big)  \to e^{-1}(1- e^{-1}) \ \text{as}\ m\to \infty.
\end{equation}
However, its distributional sensitivity and block sensitivity are \emph{not} polynomially related since 
\begin{align}
	s\big(\textrm{TRIBES}_{2^m,m},\pi_{1/2}\big)\asymp \log_2(n), \quad \textrm{but}
	\quad b\big(\textrm{TRIBES}_{2^m,m},\pi_{1/2}\big) \asymp \frac{n}{\log_2(n)}.
\end{align}

A first positive answer under certain conditions follows from the so-called OSSS inequality \cite{ODonnell2005}.

\begin{theorem}[O'Donnell-Saks-Schramm-Servedio]
	Let $f$ be a Boolean function on $n$ bits. For every algorithm $A$ and for every $p\in (0,1)$,
	\begin{equation}
		\textrm{Var}_{p}(f) \le 4p(1-p) \sum_{i=1}^n \mathbb P_{x \sim \pi_p}[i \in A_T(f)] \cdot \mathbb P_{x \sim \pi_p}[i\ \text{pivotal for}\ f],
	\end{equation}
	where $A_T(f)$ denotes the (random) set of bits queried by $A$ before determining $f$. \\
	In particular, for every transitive $f$,
	\begin{equation}
		n \cdot \textrm{Var}_{p}(f) \le 4p(1-p) \cdot a(f,\pi_p) \cdot s(f,\pi_p).
	\end{equation}
\end{theorem}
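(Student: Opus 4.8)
The plan is to prove the displayed (general) inequality and then deduce the transitive case by substitution. For the substitution, pick an algorithm $A$ attaining the minimum in the definition of $a(f,\pi_p)$; since $c_f(A,x)=\abs{A_T(f)}=\sum_{i=1}^n\mathbf 1_{i\in A_T(f)}$, this gives $\sum_{i=1}^n\mathbb P_{x\sim\pi_p}[i\in A_T(f)]=a(f,\pi_p)$. When $f$ is transitive, $\mathbb P_{x\sim\pi_p}[i\text{ pivotal for }f]$ does not depend on $i$ and therefore equals $\tfrac1n\sum_{j=1}^n\mathbb P_{x\sim\pi_p}[j\text{ pivotal for }f]=\tfrac1n s(f,\pi_p)$. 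Substituting these two identities into the general inequality yields $\textrm{Var}_p(f)\le 4p(1-p)\cdot\tfrac{s(f,\pi_p)}{n}\sum_{i=1}^n\mathbb P_{x\sim\pi_p}[i\in A_T(f)]=4p(1-p)\cdot\tfrac{a(f,\pi_p)\,s(f,\pi_p)}{n}$, which is the claim after multiplying through by $n$.

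For the general inequality I would follow the standard coupling proof of the OSSS inequality. Introduce a copy $y\sim\pi_p$ independent of $x$, run $A$ on $x$, and let $z$ be the configuration that agrees with $x$ on $A_T(f)$ and with $y$ on its complement. Since $A_T(f)$ is almost surely a witness set for $f$ at $x$ we have $f(z)=f(x)$; moreover $z\sim\pi_p$, because $A$ stopped at the moment $f$ is determined induces a subcube partition whose set of fixed coordinates is local, so conditionally on $A_T(f)$ and on $x\vert_{A_T(f)}$ the remaining bits are i.i.d.\ $\pi_p$ and may be resampled from $y$ (cf.\ the proof of Proposition \ref{prop:hierarchy-dist} and Subsection \ref{subsec:stopping-set-perspective}). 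Using independence of $x$ and $y$, $\textrm{Var}_p(f)=\mathbb E[f(x)^2]-\mathbb E[f(x)]\mathbb E[f(y)]=\mathbb E[f(x)(f(x)-f(y))]=\mathbb E\big[f(z)\,(f(z)-f(y))\big]$. Now interpolate from $y$ to $z$ along a hybrid sequence $y=w_0,w_1,w_2,\dots$ in which one switches the coordinates of $A_T(f)$, one at a time and in the order $A$ queries them on $x$, from their $y$-value to their $x$-value; writing $i_k$ for the $k$-th queried coordinate, the telescoping identity $f(z)-f(y)=\sum_{k\ge1}\mathbf 1_{k\le\abs{A_T(f)}}\big(f(w_k)-f(w_{k-1})\big)$ holds, and $f(w_k)\neq f(w_{k-1})$ requires both $x_{i_k}\neq y_{i_k}$ and $i_k$ pivotal for $f$ at $w_{k-1}$.

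It then remains to bound $\sum_{k\ge1}\mathbb E\big[\mathbf 1_{k\le\abs{A_T(f)}}\,\mathbf 1_{x_{i_k}\neq y_{i_k}}\,\mathbf 1_{i_k\text{ pivotal at }w_{k-1}}\big]$ — which, via $\textrm{Var}_p(f)=\mathbb P[f(x)=1,\,f(y)=0]$ together with a union bound over the hybrid steps, dominates $\textrm{Var}_p(f)$ — and to regroup this sum over coordinates $i$, using that a coordinate pivotal for $f$ at $x$ necessarily lies in $A_T(f)$. Conditionally on the run of $A$ on $x$ up to the first time it queries $i$ (an event of probability $\mathbb P_{x\sim\pi_p}[i\in A_T(f)]$ that depends only on the bits other than $i$), the bits $x_i$ and $y_i$ are still independent $\textrm{Ber}(p)$, which produces the factor $\mathbb P[x_i\neq y_i]=2p(1-p)\le 4p(1-p)$, and the bits of $w_{k-1}$ not yet revealed are still i.i.d.\ $\pi_p$, so that after resampling them the pivotality event at the hybrid configuration has the law of a genuine pivotality event. \textbf{The step I expect to be the main obstacle} is precisely this regrouping: one must verify that the path-conditioned pivotality probabilities recombine into the \emph{product} $\mathbb P_{x\sim\pi_p}[i\in A_T(f)]\cdot\mathbb P_{x\sim\pi_p}[i\text{ pivotal for }f]$, rather than merely into $\mathbb P_{x\sim\pi_p}[i\text{ pivotal for }f]$ (the latter would only give the weaker $\textrm{Var}_p(f)\lesssim p(1-p)\,s(f,\pi_p)$), and it is here that the decision-tree structure of $A$ must be exploited. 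Summing the resulting per-coordinate contributions then gives the stated inequality.
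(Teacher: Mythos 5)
Your deduction of the transitive statement from the general inequality is correct, and it is in fact all the paper itself supplies: the paper does not prove the general inequality but quotes it from O'Donnell--Saks--Schramm--Servedio, so the only ``proof content'' on the paper's side is exactly the substitution you carry out (optimal $A$ gives $\sum_i \mathbb P[i\in A_T(f)]=a(f,\pi_p)$, transitivity gives $\mathbb P[i\text{ pivotal}]=s(f,\pi_p)/n$).

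For the general inequality, however, the step you flag as ``the main obstacle'' is not a technicality to be checked later --- it is the entire content of the theorem, and the hybrid scheme you set up cannot deliver it. If you carry out your regrouping carefully, the contribution of coordinate $i$ is $\mathbb E\bigl[\mathbf 1_{i\in A_T}\,\mathbf 1_{x_i\neq y_i}\,\mathbf 1_{i\text{ pivotal at }w_{k-1}}\bigr]$, where $w_{k-1}$ agrees with $x$ on the coordinates queried before $i$ and with $y$ elsewhere. Since the algorithm run on $w_{k-1}$ follows the same path as the run on $x$ up to the query of $i$, the event $\{i\in A_T\}$ coincides with the corresponding event for the stitched configuration, which is itself $\pi_p$-distributed by locality; after integrating out $x_i$ (giving the factor $2p(1-p)$) you are left with $2p(1-p)\,\mathbb P[\,i\in A_T,\ i\text{ pivotal}\,]$ evaluated on a \emph{single} configuration, and since a pivotal bit lies in every witness set, this joint probability equals $\mathbb P[i\text{ pivotal}]$. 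Summing over $i$ you obtain only $\mathrm{Var}_p(f)\le 2p(1-p)\,s(f,\pi_p)$, the Poincar\'e-type bound you yourself mention as insufficient; it does not imply the product bound (for TRIBES with an efficient algorithm the revealments $\mathbb P[i\in A_T]$ are $o(1)$, so the OSSS bound is strictly stronger). The difficulty is structural: in your telescoping the pivotality event is evaluated at a configuration containing the already revealed $x$-values, which is positively correlated with the event that $i$ gets queried, so no per-history or averaging argument within this scheme can decouple the two factors. The genuine OSSS proof uses different bookkeeping in which the influence is evaluated at a configuration independent of the algorithm's execution path (this is where the decision-tree structure is really used), and that idea is absent from your proposal. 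As written, the main inequality is therefore not proved.
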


We deduce the following immediate corollary.

\begin{corollary}\label{cor:poly-rel-dist}
	Let $p \in (0,1)$ and let $(f_n)_{n\ge 1}$ be a sequence of transitive Boolean functions (with $f_n$ on $n$ bits) satisfying for some $c,\epsilon >0$ and $C<\infty$,
	\begin{equation}
		\textrm{Var}_{p}(f_n) \ge c \quad \text{and}\quad  a(f,\pi_p) \le  C\cdot  n^{1-\epsilon}, \quad \forall n\ge 1.
	\end{equation}
	Then,
	\begin{equation}
		s(f_n,\pi_p) \ge  \frac{c}{C^\frac{1}{1-\epsilon}} \cdot  \big(a(f_n,\pi_p)\big)^{\frac{\epsilon}{1-\epsilon}}
	\end{equation}
	and hence $s(f_n,\pi_p)$ and $a(f_n,\pi_p)$ are polynomially related.
\end{corollary}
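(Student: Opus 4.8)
The plan is to obtain everything directly from the transitive form of the OSSS inequality quoted just above, namely
\begin{equation}
	n \cdot \textrm{Var}_{p}(f_n) \le 4p(1-p) \cdot a(f_n,\pi_p) \cdot s(f_n,\pi_p).
\end{equation}
First I would rearrange this for $s(f_n,\pi_p)$ and insert the two hypotheses. Using $\textrm{Var}_p(f_n) \ge c$ together with the elementary bound $4p(1-p) \le 1$ gives the pointwise estimate
\begin{equation}
	s(f_n,\pi_p) \;\ge\; \frac{n\,\textrm{Var}_p(f_n)}{4p(1-p)\,a(f_n,\pi_p)} \;\ge\; \frac{c\,n}{a(f_n,\pi_p)},
\end{equation}
where there is no division by zero since $\textrm{Var}_p(f_n)\ge c>0$ forces $f_n$ non-constant, hence $a(f_n,\pi_p)\ge 1$.

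Second, I would use $a(f_n,\pi_p) \le C n^{1-\epsilon}$ to eliminate $n$ in favour of $a(f_n,\pi_p)$. Solving this inequality yields $n \ge (a(f_n,\pi_p)/C)^{1/(1-\epsilon)}$, and substituting into the previous display, together with the exponent identity $\tfrac{1}{1-\epsilon}-1 = \tfrac{\epsilon}{1-\epsilon}$, produces
\begin{equation}
	s(f_n,\pi_p) \;\ge\; \frac{c}{C^{1/(1-\epsilon)}}\,\big(a(f_n,\pi_p)\big)^{\frac{1}{1-\epsilon}-1} \;=\; \frac{c}{C^{1/(1-\epsilon)}}\,\big(a(f_n,\pi_p)\big)^{\frac{\epsilon}{1-\epsilon}},
\end{equation}
which is exactly the claimed inequality.

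Third, for the ``polynomially related'' conclusion I would simply invert the displayed bound to get $a(f_n,\pi_p) \le \big(C^{1/(1-\epsilon)} s(f_n,\pi_p)/c\big)^{(1-\epsilon)/\epsilon}$, a polynomial upper bound for $a(f_n,\pi_p)$ in terms of $s(f_n,\pi_p)$; the reverse comparison $s(f_n,\pi_p) \le a(f_n,\pi_p)$ is immediate from the hierarchy of distributional complexity measures (Proposition \ref{prop:hierarchy-dist}). Combining the two directions shows the quantities are polynomially related.

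There is essentially no substantive obstacle: the whole content sits in the transitive OSSS inequality, and what remains is elementary algebra. The only points needing a little care are (a) invoking $4p(1-p)\le 1$ so that the stated constant $c/C^{1/(1-\epsilon)}$ (rather than $c/(4p(1-p)C^{1/(1-\epsilon)})$) is legitimate, and (b) noting that transitivity of $f_n$ is exactly what lets the summed OSSS inequality collapse to the clean product form $n \cdot \textrm{Var}_p(f_n) \le 4p(1-p)\,a(f_n,\pi_p)\,s(f_n,\pi_p)$ in the first place.
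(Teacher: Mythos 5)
Your proposal is correct and follows exactly the route the paper intends: the paper states this as an immediate consequence of the transitive form of the OSSS inequality, and your rearrangement (using $4p(1-p)\le 1$, $\textrm{Var}_p(f_n)\ge c$, and $n \ge \big(a(f_n,\pi_p)/C\big)^{1/(1-\epsilon)}$) together with $s(f_n,\pi_p)\le a(f_n,\pi_p)$ from Proposition \ref{prop:hierarchy-dist} is precisely the intended algebra.
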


\begin{remark}
	In the case of critical planar percolation, this corollary can be used to show polynomial relations at criticality (see Example \ref{ex:square-crossing}).
\end{remark}

We close this section with a geometric perspective  on the ratio of $sc(f,\pi_{1/2})$ and $s(f,\pi_{1/2})$, namely
\begin{equation}
	\frac{sc(f,\pi_{1/2})}{s(f,\pi_{1/2})} = \frac{\inf_{\mathcal C : E(\mathcal C) \subseteq E(f)} \abs{\Delta_E(\mathcal C)}}{\abs{\Delta_E(f)}} =: \gamma(f),
\end{equation} 
where $E_n$ denotes the edges of the cube $\{0,1\}^n$, $E(f) := \{\{x,y\} \in E_n : f(x)=f(y)\}$, $E(\mathcal C) := \{\{x,y\} \in E_n : C(x)=C(y)\}$, and the two edge boundaries are defined as $\Delta_E(f) = E_n \setminus E(f)$ and $\Delta_E(\mathcal C) = E_n \setminus E(\mathcal C)$. The equality above is relatively straightforward to check and Theorem \ref{thm:odonnell-servedio} immediately yields $\gamma(f) \ge s(f,\pi_{1/2})$. Establishing upper bounds on $\gamma(f)$ in terms of other distributional complexity measures could then be a way to obtain polynomial relations.

\section{Distributional algorithmic complexity with partial information}\label{sec:alg-comp-partial-info}

In this section, we will only be dealing with the product measure $\pi_{1/2}$. A standard algorithm queries the bits one by one and pays a cost of 1 for each query. 
Our goal is to allow algorithms to ask for partial information concerning the value of a bit at a cost depending on the amount of information asked for. 

\subsection{Distributional $\Gamma$-algorithmic complexity}

Consider $x=(x_1,\ldots,x_n)$, where each $$x_i = (x_i(t))_{t=0}^{T_{\{0,1\}}}$$ is an independent Brownian motion started at $1/2$ and stopped when hitting the set $\{0,1\}$. 
A \emph{generalized algorithm} $A$ starts in the state $(1/2,\ldots,1/2)$ and initially picks some $i \in [n]$ and $p \in (1/2,1]$. To obtain the new state, we let $x_i$ evolve up to time $T_{\{1-p,p\}}$, the hitting time of $\{1-p,p\}$. The new state is then
\begin{equation}
	\left(1/2,\ldots,1/2,x_i(T_{\{1-p,p\}}),1/2,\ldots,1/2\right).
\end{equation}
At any later stage, if the present state is $(p_1,\ldots,p_n)$, $A$ chooses some $j \in [n]$ and $p \in (\max\{1-p_j,p_j\},1]$, and goes to state
$(p_1,\ldots,p_{j-1},x_j(T_{\{1-p,p\}}),p_{j+1},\ldots,p_n)$. Note that $p_j \to 1- p$ with probability $\frac{p - p_j}{p-(1-p)}$ and  $p_j \to p$ with probability $\frac{p_j - (1-p)}{p-(1-p)}$.
The algorithm continues until reaching a state $(p_1,\ldots,p_n) \in \{0,1\}^n$. We will only consider algorithms that almost surely terminate after finitely many steps. 
Note that standard algorithms as defined in Section \ref{sec:defs-hierarchy} correspond to those algorithms which always pick $p=1$.

A \emph{cost function} $\Gamma: [1/2,1] \to [0,1]$ is a non-decreasing function satisfying $\Gamma(1/2)=0$ and $\Gamma(1)=1$.	
Given a Boolean function $f$ on $n$ bits and a cost function $\Gamma$, we define the \emph{$\Gamma$-cost of $A$ for determining $f$} as 
\begin{equation}
	c_\Gamma(A,f) = \E_{x}\left[\sum_{i=1}^n \Gamma(\max\{1-\bar{p}_i,\bar{p}_i\}) \right],
\end{equation} 
where $\bar{p}_i$ is the state of  bit $i$ at the first moment at which the $p_j$'s that are $0$ or $1$ determine $f$.
The motivation for this cost function is to assign a cost 
$\Gamma(p) - \Gamma(\max\{1-p_i,p_i\})$
for the transition from $p_i$ or $1-p_i$ to $\{p,1-p\}$.
Finally, we define the \emph{distributional $\Gamma$-algorithmic complexity} as
\begin{equation}\label{eq:1}
	a_\Gamma(f,\pi_{1/2}) = \inf_A c_\Gamma(A,f).
\end{equation}
We note that $w(f,\pi_{1/2}) \le 	a_\Gamma(f,\pi_{1/2}) \le 	a(f,\pi_{1/2})$ for all $\Gamma$ and for all $f$. Note that for the trivial choice $\Gamma = \mathbf{1}_{(1/2,1]}$ (respectively $\Gamma = \mathbf{1}_{\{1\}}$), the right (respectively left) inequality saturates. 
We recall that for PARITY functions and ADDRESS functions (see Example \ref{ex:address}),  $w(f,\pi_{1/2}) = a(f,\pi_{1/2})$.
The next example illustrates that this new complexity measure can have more interesting behavior. 

\begin{example}\label{ex:gamma-algo-AND-2}
	The AND function on $n=2$ bits (see Examples \ref{ex:tribes-function} and \ref{ex:or-function}) has distributional complexities $$a(\textrm{AND}_2,\pi_{1/2}) = 3/2 \quad \textrm{and} \quad w(\textrm{AND}_2,\pi_{1/2}) = 5/4.$$ 
	For some fixed $p_0 \in (1/2,1]$, we consider the generalized algorithm $A_{p_0}$ which initially asks the $(1-{p_0})/{p_0}$ question for bit 1 (meaning that it picks bit $1$ and goes to the state $(x_1(T_{\{1-{p_0},{p_0}\}}),1/2)$). If $x_1(T_{\{1-{p_0},{p_0}\}})=1-{p_0}$, it asks the $0/1$ question for bit $1$ in the second step, and then the $0/1$ question for bit $2.$
	If $x_1(T_{\{1-{p_0},{p_0}\}})={p_0}$, it asks the $0/1$ question for bit $2$ in the second step, and then the $0/1$ question for bit $1$. One can check that the $\Gamma$-cost of $A_{p_0}$ for determining $\textrm{AND}_2$ is 
	\begin{equation}
		c_\Gamma(A_{p_0},\textrm{AND}_2) = 3/2 + \frac{\Gamma({p_0}) -2{p_0} +1}{4}. 
	\end{equation} 
	We deduce that $a_\Gamma(\textrm{AND}_2,\pi_{1/2}) < a(\textrm{AND}_2,\pi_{1/2})$ if $\Gamma({p_0}) < 2{p_0}-1$. 
\end{example}

We are interested in the following question.
\begin{question} \label{question:algo-gamma}
	Given a Boolean function $f$, for which cost functions $\Gamma$ do we have
	\begin{equation}
		a_\Gamma(f,\pi_{1/2}) =	a(f,\pi_{1/2})\ ?
	\end{equation}
\end{question}

It is interesting to compare our notion of distributional $\Gamma$-algorithmic complexity with the notion of Gross \cite{Gross2022}. On the one hand, his concept of ``fractional query algorithms'' is more general than the ``generalized algorithms'' defined above since it allows for queries, starting at $p_i$, that let $x_i$ evolve up to the hitting time $T_{\{q,p\}}$, where $p_i \in (q,p)$. Unlike in our model, the amount of information revealed by the algorithm is therefore not monotone in time. On the other hand, Gross assumes cost to be quadratic which corresponds to the specific choice of $\Gamma(p)=(2p-1)^2$ in our model. We also refer the reader to \cite{Jacka2011} where, again considering more general algorithms and quadratic cost, the authors study the special case of the MAJORITY function on $n=3$ bits.

\subsection{Distributional $(p,\kappa)$-algorithmic complexity}\label{subsec:dist-p-kappa-alg-com}	

In this subsection, we study the family of generalized algorithms
\begin{equation}
	\mathcal A_p = \{\textrm{algorithms which are only allowed to ask} \ (1-p)/p\ \textrm{or}\ 0/1\ \textrm{questions}\}
\end{equation}
for some fixed $p \in (1/2,1)$. We note that the $\Gamma$-cost of an algorithm $A \in \mathcal A_p$ depends on $\Gamma$ only through $\Gamma(p) \in [0,1]$. Hence, we can represent the cost of a $(1-p)/p$ question by a number $\kappa \in [0,1]$. We write $c_{p,\kappa}(A,f)$ for $c_\Gamma(A,f)$ and naturally define the \emph{distributional $(p,\kappa)$-algorithmic complexity of $f$} as 
\begin{equation}
	a_{p,\kappa}(f,\pi_{1/2}) = \min_{A \in \mathcal A_p} c_{p,\kappa}(A,f).
\end{equation}
An algorithm $A \in \mathcal A_p$ is called \emph{$(p,\kappa)$-optimal for $f$} if it achieves the above minimum.
We ask the analogue of Question \ref{question:algo-gamma} in this simplified setting.
\begin{question} \label{question:algo-p-kappa}
	Given a Boolean function $f$, for which $p$ and $\kappa$ do we have
	\begin{equation}
		a_{p,\kappa}(f,\pi_{1/2}) =	a(f,\pi_{1/2})\ ?
	\end{equation}
\end{question}

It will be convenient to define for $1 \le i \le n$, the random variables
\begin{equation}
	Z_i := \mathbf{1}_{x_i(T_{\{1-p,p\}})=p} \quad \text{and} \quad X_i := x_i(T_{\{0,1\}}).
\end{equation}
We note that the answers to the questions that any algorithm $A \in \mathcal A_p$ asks only depend on these random variables. $Z_i$ and $X_i$ are Ber($1/2$)-distributed, but not independent as $\mathbb P_x[X_i = Z_i] = p$. We now give an alternative description of algorithms in $\mathcal A_p$, which will turn out to be useful.
\begin{definition}\label{defn:algo-as-partition}
	A sequence  of random partitions $A = (A_t)_{0\le t \le 2n} = (A^0_t,A^1_t,A^2_t)_{0 \le t \le 2n}$ of $[n]$ is an \emph{algorithm} if 
	\begin{enumerate}
		\item[(i)] $A_0 = ([n],\emptyset,\emptyset)$ and $A_{2n} = (\emptyset,\emptyset,[n])$,
	\end{enumerate}
	and for every $t \in \{1,\ldots,2n\}$,
	\begin{enumerate}
		\item[(ii)] $A_{t}$ is obtained from $A_{t-1}$ by either moving an element from $A^0_{t-1}$ to $A^1_{t-1}$ or from $A^1_{t-1}$ to $A^2_{t-1}$,
		\item[(iii)] $A_{t}$ is measurable with respect to $A_{t-1}$, $(Z_i)_{i\in A_{t-1}^1}$, and $(X_i)_{i\in A_{t-1}^2}$.
	\end{enumerate} 
\end{definition}

The random partition represents bits for which we have no information ($A^0$), bits for which we have partial information ($A^1$), and bits for which we have full information ($A^2$). Moving bit $i$ from $A^0$ to $A^1$ corresponds to querying $Z_i$ or equivalently asking the $(1-p)/p$ question for bit $i$, thereby providing partial information about $X_i$. Also moving bit $i$ from $A^1$ to $A^2$ corresponds to querying $X_i$ after already having queried $Z_i$, or equivalently asking the $0/1$ question for bit $i$ after already having asked the $(1-p)/p$ question for bit $i$.

An algorithm $A$ is called \emph{induced} if it always queries $X_i$ immediately after $Z_i$; equivalently $A_{t}^1$ has at most one element for every $t$. Clearly, there is a one-to-one correspondence between the subfamily of induced algorithms, denoted by $\mathcal I \subseteq \mathcal A_p$, and the family of standard algorithms for $f$.
Given a Boolean function $f$ on $n$ bits and an algorithm $A \in \mathcal A_p$, we define the (random) first  time at which the algorithm determines $f$ as
\begin{equation}
	T_A = T_{A}(f) := \min\left\{t \ge 0 : f \ \text{is determined by} \ (X_i)_{i \in A_t^2}\right\}.
\end{equation}
We note that the cost of $A \in \mathcal A_p$ to determine $f$ is given by
\begin{equation}
	c_{p,\kappa}(A,f) =  \E_x \left[ \kappa \cdot \abs{A_{T_A}^1} + \abs{A_{T_A}^2 } \right].
\end{equation}

\begin{remark} \label{rem:kappa-1}
	For $\kappa =1$ and any $p \in (1/2,1)$, we have $a_{p,1}(f,\pi_{1/2}) = a(f,\pi_{1/2})$ since asking a $0/1$ question after the corresponding $(1-p)/p$ question is of no disadvantage as it has no additional cost. In other words, 
	\begin{equation}
		\min_{A \in \mathcal A_{p}} \E_x \left[\abs{A_{T_A}^1} + \abs{A_{T_A}^2 } \right] =  \min_{I \in \mathcal I} \E_x\left[ \abs{I_{T_I}^2}\right]. 
	\end{equation}
\end{remark}
The next easy proposition establishes monotonicity in $p$ and $\kappa$ with respect to Question \ref{question:algo-p-kappa}.
\begin{proposition}\label{prop:simplified-model-monotonicity}
	Consider $(p,\kappa),\ (p',\kappa') \in (1/2,1) \times [0,1]$. If $p \ge p'$ and $\kappa' \ge \kappa$, then we have 
	\begin{equation}
		a_{p,\kappa}(f,\pi_{1/2}) = a(f,\pi_{1/2}) \implies a_{p',\kappa'}(f,\pi_{1/2}) = a(f,\pi_{1/2})
	\end{equation}
\end{proposition}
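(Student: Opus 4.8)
The plan is to derive the statement from two monotonicity properties of $a_{p,\kappa}(f,\pi_{1/2})$ — an immediate one in $\kappa$ and, via a short coupling, one in $p$ — together with Remark \ref{rem:kappa-1}. First I would record the monotonicity in $\kappa$ at fixed $p$. For every fixed $A\in\mathcal A_p$ and every $\kappa\le\kappa'$,
\[
c_{p,\kappa}(A,f)=\E_x\!\left[\kappa\,\abs{A_{T_A}^1}+\abs{A_{T_A}^2}\right]\le \E_x\!\left[\kappa'\,\abs{A_{T_A}^1}+\abs{A_{T_A}^2}\right]=c_{p,\kappa'}(A,f),
\]
so taking the minimum over $A\in\mathcal A_p$ gives $a_{p,\kappa}(f,\pi_{1/2})\le a_{p,\kappa'}(f,\pi_{1/2})$. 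Combined with Remark \ref{rem:kappa-1}, this yields the universal bound $a_{p,\kappa}(f,\pi_{1/2})\le a_{p,1}(f,\pi_{1/2})=a(f,\pi_{1/2})$ for all $p\in(1/2,1)$ and $\kappa\in[0,1]$.

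The heart of the argument is monotonicity in $p$ at fixed $\kappa$: if $p'\le p$ then $a_{p,\kappa}(f,\pi_{1/2})\le a_{p',\kappa}(f,\pi_{1/2})$. I would prove this by simulating an arbitrary $A'\in\mathcal A_{p'}$ by some $A\in\mathcal A_p$ of equal cost. The probabilistic input is the nesting $1-p<1-p'<\tfrac12<p'<p$: a Brownian motion started at $\tfrac12$ hits $\{1-p',p'\}$ strictly before $\{1-p,p\}$, and at that time it still lies in $(1-p,p)$. Writing $Z_i':=\mathbf 1_{x_i(T_{\{1-p',p'\}})=p'}$, $Z_i:=\mathbf 1_{x_i(T_{\{1-p,p\}})=p}$ and $X_i:=x_i(T_{\{0,1\}})$, the strong Markov property shows that $Z_i'\to Z_i\to X_i$ is a Markov chain, so $Z_i'$ is conditionally independent of $X_i$ given $Z_i$. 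Hence, using independent external randomness in each coordinate, $A$ can manufacture from the observed value $Z_i$ a bit $\widetilde Z_i'$ with $\mathcal L(\widetilde Z_i'\mid Z_i)=\mathcal L(Z_i'\mid Z_i)$, so that $(\widetilde Z_i',X_i)\overset{d}{=}(Z_i',X_i)$, jointly and independently over $i$. The algorithm $A$ then follows $A'$ move by move: whenever $A'$ asks its $(1-p')/p'$ question for a bit $i$, $A$ asks its $(1-p)/p$ question for bit $i$ and feeds $\widetilde Z_i'$ into $A'$'s decision rule; whenever $A'$ asks its $0/1$ question for bit $i$, $A$ does the same and returns the true $X_i$. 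Since the partition updates of $A'$ are measurable functions of the $Z'$-values of the bits then in $A'^1$ and the $X$-values of the bits then in $A'^2$, this reproduces the law of the run of $A'$ jointly with the revealed $X_i$'s; therefore $T_A=T_{A'}$, $\abs{A_{T_A}^1}=\abs{(A')_{T_{A'}}^1}$ and $\abs{A_{T_A}^2}=\abs{(A')_{T_{A'}}^2}$ in distribution, whence $c_{p,\kappa}(A,f)=c_{p',\kappa}(A',f)$. Minimising over $A'$ gives $a_{p,\kappa}(f,\pi_{1/2})\le a_{p',\kappa}(f,\pi_{1/2})$. (That allowing this extra randomness does not change $a_{p,\kappa}$ is the usual averaging argument.)

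Putting these together, suppose $p\ge p'$, $\kappa'\ge\kappa$, and $a_{p,\kappa}(f,\pi_{1/2})=a(f,\pi_{1/2})$. Then
\[
a(f,\pi_{1/2})\ \ge\ a_{p',\kappa'}(f,\pi_{1/2})\ \ge\ a_{p,\kappa'}(f,\pi_{1/2})\ \ge\ a_{p,\kappa}(f,\pi_{1/2})\ =\ a(f,\pi_{1/2}),
\]
using, in order, the universal bound, $p$-monotonicity at level $\kappa'$, and $\kappa$-monotonicity at level $p$; hence all terms are equal, and in particular $a_{p',\kappa'}(f,\pi_{1/2})=a(f,\pi_{1/2})$.

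The main obstacle is making the simulation step watertight: one must carefully establish the nested strong-Markov structure of the three hitting-time observations $Z_i'$, $Z_i$, $X_i$, and verify that the coupling faithfully reproduces both the partition dynamics of $A'$ and its stopping time. The $\kappa$-monotonicity and the reduction at the end are routine.
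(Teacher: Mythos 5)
Your proof is correct and follows essentially the same two-step strategy as the paper: trivial monotonicity in $\kappa$ (together with Remark \ref{rem:kappa-1} giving $a_{p,\kappa}(f,\pi_{1/2})\le a(f,\pi_{1/2})$), plus monotonicity in $p$, combined in the same chain of inequalities. The only difference is cosmetic: where the paper establishes $a_{p,\kappa}(f,\pi_{1/2})\le a_{p',\kappa}(f,\pi_{1/2})$ via the hybrid class $\mathcal A_{p',p}$ with a free $(1-p)/p$ upgrade, you make the same underlying fact explicit through a strong-Markov coupling in which an algorithm in $\mathcal A_p$ simulates the coarser $(1-p')/p'$ answers from $Z_i$ and then derandomizes by averaging.
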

\begin{proof}
	First, fix $p$, let $\kappa' \ge \kappa$ and assume that there exists an induced $(p,\kappa)$-optimal algorithm $I \in \mathcal I$ for $f$. We note that the cost of any algorithm $A \in \mathcal A_p$ to determine $f$ is non-decreasing in $\kappa$, and so also the distributional $(p,\kappa)$-algorithmic complexity of $f$. Moreover, the cost of any induced algorithm to determine $f$ is constant in $\kappa$. Therefore,
	\begin{equation}
		a_{p,\kappa}(f,\pi_{1/2}) \le a_{p,\kappa'}(f,\pi_{1/2}) \le c_{p,\kappa'}(I,f) = c_{p,\kappa}(I,f),
	\end{equation}
	and we deduce from the $(p,\kappa)$-optimality of $I$ that both inequalities are in fact equalities. Hence, $a(f,\pi_{1/2}) = 	a_{p,\kappa}(f,\pi_{1/2})$ implies $a(f,\pi_{1/2}) = a_{p,\kappa'}(f,\pi_{1/2})$.
	
	Second, fix $\kappa$ and let $p \ge p'$. Since $a_{p',\kappa}(f,\pi_{1/2}) \le a(f,\pi_{1/2})$, it suffices to show that $a_{p,\kappa}(f,\pi_{1/2}) \le a_{p',\kappa}(f,\pi_{1/2})$. To this end, we consider the family $\mathcal A_{p',p}$ of generalized algorithms, which are only allowed to ask $(1-p')/p'$, $(1-p)/p$ or $0/1$ questions, and we associate cost $\kappa$ to asking a $(1-p')/p'$ question, cost $0$ to asking a $(1-p)/p$ question (after the corresponding $(1-p')/p'$ question), and cost $1-\kappa$ to asking a $0/1$ question (after the corresponding $(1-p)/p$ question). Since it is clearly optimal for any algorithm $A \in \mathcal A_{p',p}$ to always ask the $(1-p)/p$ question directly after the $(1-p')/p'$ question, we have using obvious notation
	\begin{equation}
		a_{p',\kappa}(f,\pi_{1/2}) = \min_{A \in \mathcal A_{p'}} c_{p',\kappa}(A,f) \ge \min_{A \in \mathcal A_{p',p}} c_{p',p,\kappa}(A,f) = \min_{A \in \mathcal A_{p}} c_{p,\kappa}(A,f) = a_{p,\kappa}(f,\pi_{1/2}).
	\end{equation}
\end{proof}

For $p \in (1/2,1)$ and $\kappa \in [0,1]$, we define 
\begin{equation}
	\kappa_c(f,p) = \inf\left\{\kappa : a_{p,\kappa}(f,\pi_{1/2}) = a(f,\pi_{1/2})\right\},
\end{equation}
and 
\begin{equation}
	p_c(f,\kappa) = \sup\left\{p : a_{p,\kappa}(f,\pi_{1/2}) = a(f,\pi_{1/2})\right\}.
\end{equation}
By the previous proposition $p \mapsto \kappa_c(f,p)$ and $\kappa \mapsto p_c(f,\kappa)$ are (weakly) increasing functions. For the AND function on $n=2$ bits, it can easily be shown based on the computations in Example \ref{ex:gamma-algo-AND-2} that $\kappa_c(\textrm{AND}_2,p) = 2p-1$.

\begin{theorem} \label{thm:composition-simplified-model}
	For every Boolean function $f$ on $n \ge 1$ bits and $p \in (1/2,1)$,
	\begin{equation}\label{eq:thm:composition-simplified-model}
		\kappa_c(f,p) \le  \kappa_0(n,p) :=\frac{1}{1 + \frac{1}{n} \left(\frac{1-p}{2}\right)^n}\ .
	\end{equation}
\end{theorem}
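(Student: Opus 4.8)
The plan is to prove the equivalent statement that $a_{p,\kappa}(f,\pi_{1/2}) = a(f,\pi_{1/2})$ for every $\kappa \geq \kappa_0(n,p)$; by the definition of $\kappa_c(f,p)$ this immediately gives the asserted bound. Since $a_{p,\kappa}(f,\pi_{1/2}) \leq a(f,\pi_{1/2})$ holds for all $\kappa$, and the case $\kappa=1$ is Remark~\ref{rem:kappa-1}, it suffices to establish $a_{p,\kappa}(f,\pi_{1/2}) \geq a(f,\pi_{1/2})$ for $\kappa \in [\kappa_0(n,p),1)$; alternatively one proves it only at $\kappa = \kappa_0(n,p)$ and extends by Proposition~\ref{prop:simplified-model-monotonicity}. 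One elementary fact will be used throughout: $m \mapsto \kappa_0(m,p)$ is increasing, because $\frac{1}{m}\big(\frac{1-p}{2}\big)^m$ is decreasing in $m$ (the ratio of consecutive terms is $\frac{m}{m+1}\cdot\frac{1-p}{2} < 1$), so the hypothesis $\kappa \geq \kappa_0(n,p)$ is inherited by every restriction of $f$ to fewer bits.

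I would argue by induction on $n$, fixing an optimal algorithm $A\in\mathcal A_p$ for $(p,\kappa)$ — one exists since, up to relabelling, there are only finitely many algorithms of the bounded depth $2n$ — and inspecting its first move. If $A$ first fully queries a bit $i$, then conditioning on $X_i$ leaves an optimal $(p,\kappa)$-algorithm for the $(n-1)$-bit restriction of $f$ obtained by fixing bit $i$ to $X_i$; the induction hypothesis (valid since $\kappa\geq\kappa_0(n,p)\geq\kappa_0(n-1,p)$) gives that $a_{p,\kappa}(f,\pi_{1/2})$ equals $1$ plus the expected optimal \emph{standard} cost of that restriction, which is itself the cost of a genuine standard algorithm for $f$ and hence $\geq a(f,\pi_{1/2})$. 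The same conclusion holds if $A$ first does a $(1-p)/p$-query on $i$ that, for both outcomes $Z_i\in\{0,1\}$, is immediately followed by the full query of $i$, since then $A$ is cost-equivalent to a full-query-first algorithm. The remaining case — $A$ opens bit $i$ with a partial query but, for some outcome $Z_i=z^\ast$, does not complete $i$ next (hence opens a second partially-queried bit, the only other available move) — is the genuinely non-induced case, and all the difficulty is there.

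For that case I would combine two ingredients. The first is a general ``inducing'' estimate: to any $A\in\mathcal A_p$ attach the standard algorithm $\tilde A$ that runs a simulation of $A$ and, whenever $A$ opens a bit $k$ with a partial query, fully queries $k$ instead — reading off both $Z_k$ and $X_k$, as a standard (induced) algorithm is entitled to — feeds $Z_k$ back into the simulation, and halts as soon as the revealed inputs determine $f$. The bits queried by $\tilde A$ are among those ever touched by $A$, so $c_{p,1}(\tilde A,f)\leq \mathbb E_x[\abs{A^1_{T_A}}] + \mathbb E_x[\abs{A^2_{T_A}}]$, whence $a(f,\pi_{1/2})\leq \mathbb E_x[\abs{A^1_{T_A}}]+\mathbb E_x[\abs{A^2_{T_A}}]$, and since $c_{p,\kappa}(A,f) = \kappa\,\mathbb E_x[\abs{A^1_{T_A}}] + \mathbb E_x[\abs{A^2_{T_A}}]$ we obtain the cheap bound $a_{p,\kappa}(f,\pi_{1/2})\geq a(f,\pi_{1/2}) - (1-\kappa)\,\mathbb E_x[\abs{A^1_{T_A}}] \geq a(f,\pi_{1/2}) - (1-\kappa)n$. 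This already confines $a_{p,\kappa}(f,\pi_{1/2})$ to the interval $(a(f,\pi_{1/2})-1,\;a(f,\pi_{1/2})]$ once $\kappa\geq\kappa_0(n,p)$; but since the gain $a(f,\pi_{1/2})-a_{p,\kappa}(f,\pi_{1/2})$ can be an arbitrarily small positive number (witness $\textrm{AND}_2$ for $\kappa$ just below $2p-1$), the cheap bound does not close the gap on its own. The second, decisive ingredient is therefore a refinement: that for an optimal $A$ the inducing construction $\tilde A$ (suitably improved, e.g.\ by deferring the full query of a bit $A$ opens until $A$ actually needs its value) satisfies $c_{p,1}(\tilde A,f)\leq \kappa\,\mathbb E_x[\abs{A^1_{T_A}}] + \mathbb E_x[\abs{A^2_{T_A}}] = c_{p,\kappa}(A,f)$, which yields $a(f,\pi_{1/2}) \le a_{p,\kappa}(f,\pi_{1/2})$ and hence equality.

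The hard part is exactly this refinement: quantifying how little adaptive benefit a $(1-p)/p$-answer can confer. Concretely, I would analyse the per-step optimality of $A$ at the first moment it becomes non-induced, bounding from below the value of each state ``bit $i$ partially queried, $Z_i=z$'' by $a(f,\pi_{1/2})-\kappa$ minus an error, and tracking how that error compounds over the nested partial queries $A$ performs. The point is that $Z_k$ agrees with $X_k$ only with probability $p$, so the only realizations on which the accumulated partial information is genuinely decisive are those in which all $(1-p)/p$-answers are misleading; each bit then contributes a factor $\frac{1-p}{2}$ to that event, giving the $\big(\frac{1-p}{2}\big)^n$, while the remaining factor $\frac1n$ enters through the crude bound $\mathbb E_x[\abs{A^1_{T_A}}]\leq n$ when one divides through. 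Comparing the two bounds on the gain forces $\frac{1-\kappa}{\kappa} > \frac1n\big(\frac{1-p}{2}\big)^n$ whenever the gain is positive, contradicting $\kappa\geq\kappa_0(n,p)$; so the gain vanishes. Making the accumulation of the error term rigorous — in particular controlling the interaction between simultaneously open partial queries inside the recursion — is the step I expect to require the most work.
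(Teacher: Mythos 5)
Your setup is fine as far as it goes: the reduction to showing $a_{p,\kappa}(f,\pi_{1/2}) \ge a(f,\pi_{1/2})$, the monotonicity of $\kappa_0(\cdot,p)$ in the number of bits, and the cheap ``inducing'' bound $a(f,\pi_{1/2}) \le \E_x[\abs{A^1_{T_A}}] + \E_x[\abs{A^2_{T_A}}]$, hence $a_{p,\kappa}(f,\pi_{1/2}) \ge a(f,\pi_{1/2}) - (1-\kappa)\E_x[\abs{A^1_{T_A}}]$, are all correct, and you rightly note that this alone cannot close the gap because the gain need not be quantized. But the proof stops exactly at the decisive point: the ``refinement'' $c_{p,1}(\tilde A,f) \le c_{p,\kappa}(A,f)$ for a $(p,\kappa)$-optimal $A$ --- equivalently, that an optimal $A$ satisfies $\E_x[\abs{A^1_{T_A}}]=0$ --- is asserted, supported only by a heuristic about all $(1-p)/p$-answers being misleading, and then explicitly deferred as ``the step I expect to require the most work.'' That step \emph{is} the theorem. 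The per-step value-function/error-compounding sketch is not a proof, and it is unclear it can be completed as described: the adaptive benefit of a partial answer is not local to a single step (it is realized only through later choices across several simultaneously open bits), which is precisely the interaction you flag but do not control. Nor does the induction on the first move reduce the hard case, since once the first move is a partial query that is not completed, the residual problem is not an instance of the same problem on fewer bits (the state carries an open partial query), so the induction hypothesis does not apply to it.

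For comparison, the paper's proof avoids any induction on the first query. It conditions on the variables $Y_i = \mathbf{1}_{Z_i = X_i}$: given $Y$, the counts $\abs{A^1_{T_A}}$ and $\abs{A^2_{T_A}}$ are functions of the i.i.d.\ uniform bits $Z_1,\ldots,Z_n$ alone (independent of $Y$ and of $p$), so the conditional expectations are integer multiples of $2^{-n}$. Hence any strict excess of $\E_x[\abs{A^1_{T_A}}+\abs{A^2_{T_A}}]$ over the optimal induced cost is at least $2^{-n}(1-p)^n$, contributing at least $\kappa\, 2^{-n}(1-p)^n$ to the $(p,\kappa)$-cost, while the possible saving on full queries is at most $(1-\kappa)n$; for $\kappa > \kappa_0(n,p)$ this contradicts optimality (Lemma \ref{lem:simplified-framework-1}). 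Then, because these $Y$-conditional identities do not depend on $p$, the same sequence of random partitions can be run at $p=1/2$, where the $Z_i$ carry no information and $A$ becomes a randomized standard algorithm; optimality of the induced minimizer then forces $\E_x[\abs{A^1_{T_A}}]=0$ for every $p$ (Lemma \ref{lem:simplified-framework-2}), so the $(p,\kappa)$-cost of an optimal $A$ equals its standard cost and $a_{p,\kappa}(f,\pi_{1/2}) = a(f,\pi_{1/2})$. Your heuristic does anticipate where the factors $\bigl(\tfrac{1-p}{2}\bigr)^n$ and $\tfrac1n$ should come from, but without an analogue of this conditioning/granularity argument and the $p=1/2$ randomization step, the proposal has a genuine gap at its central claim.
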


We divide the proof into two lemmas. 

\begin{lemma}\label{lem:simplified-framework-1}
	Let $f$ be a Boolean function on $n\ge 1$ bits. If $\kappa > \kappa_0(n,p)$, then every  algorithm $A \in \mathcal A_p$ that is $(p,\kappa)$-optimal for $f$ satisfies 
	\begin{equation} \label{eq:lem:simplified-framework-1}
		\E_x \left[\abs{A_{T_A(f)}^1} + \abs{A_{T_A(f)}^2 } \right] 
		= \min_{I \in \mathcal I} \E_x\left[ \abs{I_{T_I(f)}^2}\right].
	\end{equation}%
	Moreover, if \eqref{eq:lem:simplified-framework-1} holds and if $Z_i$ is a $(p,\kappa)$-optimal first query among all algorithms in $\mathcal A_p$, then $Z_i$ is also an optimal first query among all algorithms in $\mathcal I$. 
\end{lemma}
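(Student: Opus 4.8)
Throughout write $a^{\ast}:=a(f,\pi_{1/2})=\min_{I\in\mathcal I}\E_x\big[\,|I^2_{T_I(f)}|\,\big]$ for the standard distributional algorithmic complexity, and for $A\in\mathcal A_p$ put
\[
m(A):=\E_x\big[\,|A^1_{T_A(f)}|+|A^2_{T_A(f)}|\,\big],\qquad \mu(A):=\E_x\big[\,|A^1_{T_A(f)}|\,\big],
\]
so that $c_{p,\kappa}(A,f)=m(A)-(1-\kappa)\mu(A)$ (and we may assume $A$ is deterministic, a mixture being $(p,\kappa)$-optimal only if each of its pieces is). The starting point is Remark \ref{rem:kappa-1}: it gives $m(A')\ge a^{\ast}$ for every $A'\in\mathcal A_p$, with equality for an optimal induced algorithm $I^{\ast}$, and such an $I^{\ast}$ satisfies $c_{p,\kappa}(I^{\ast},f)=m(I^{\ast})=a^{\ast}$ for every $\kappa$ (induced algorithms have empty first block at the stopping time). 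Hence, if $A$ is $(p,\kappa)$-optimal for $f$, then $m(A)-(1-\kappa)\mu(A)=c_{p,\kappa}(A,f)\le c_{p,\kappa}(I^{\ast},f)=a^{\ast}$, and since $\mu(A)\le n$ trivially,
\[
0\ \le\ m(A)-a^{\ast}\ \le\ (1-\kappa)\mu(A)\ \le\ (1-\kappa)\,n .
\]

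The crux is the following discreteness (``gap'') estimate: \emph{for every $A'\in\mathcal A_p$ one has $m(A')\in\{a^{\ast}\}\cup\big[a^{\ast}+(\tfrac{1-p}{2})^n,\infty\big)$.} To prove it, attach to $A'$ the induced algorithm $\Phi(A')$ which imitates the queries of $A'$ but performs the $0/1$-query on bit $i$ immediately after the $(1-p)/p$-query on bit $i$, halting as soon as $f$ is determined; this is a legitimate algorithm, as it has revealed a superset of the information available to $A'$. The set of bits $\Phi(A')$ fully queries before halting is contained in the set of bits $A'$ touches before halting, so $|\Phi(A')^2_{T}|\le |A'^1_{T_{A'}}|+|A'^2_{T_{A'}}|$ pointwise and $\E_x[\,|\Phi(A')^2_{T}|\,]\le m(A')$. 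If this inequality is strict, the event ``$A'$ touches a superfluous bit'' is nonempty; since it lies in the $\sigma$-algebra generated by $(Z_i,X_i)_{i=1}^n$, all of whose atoms have probability at least $(\tfrac{1-p}{2})^n$, it has probability at least $(\tfrac{1-p}{2})^n$, and on it the two integrands differ by at least $1$; therefore $m(A')-a^{\ast}\ge m(A')-\E_x[\,|\Phi(A')^2_{T}|\,]\ge (\tfrac{1-p}{2})^n$.

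It remains to handle the case $m(A')=\E_x[\,|\Phi(A')^2_{T}|\,]$, in which $\Phi(A')$ is an induced algorithm realizing the value $m(A')$. For an induced algorithm the $Z$-value of a bit is revealed only together with its $X$-value, hence is conditionally independent of the not-yet-queried bits given the already-revealed $X$-values; a bottom-up exchange argument therefore replaces $\Phi(A')$ by a $Z$-oblivious induced algorithm of no larger expected cost, and — this is the quantitative heart — whenever the replacement is strict it loses, at some node of the decision tree at depth $s\le n-1$, a quantity at least (probability of that node) $\times$ (jump in optimal remaining cost) $\ge (\tfrac{1-p}{2})^{s}\cdot 2^{-(n-s)}\ge(\tfrac{1-p}{2})^n$, since $2^{-(n-s)}\ge(\tfrac{1-p}{2})^{n-s}$. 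A $Z$-oblivious induced algorithm is a genuine standard algorithm, so its expected cost is a multiple of $2^{-n}$ and is $\ge a^{\ast}$; hence it is $a^{\ast}$ or $\ge a^{\ast}+2^{-n}>a^{\ast}+(\tfrac{1-p}{2})^n$. Combining the two cases yields the gap estimate.

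Finally, $\kappa>\kappa_0(n,p)$ is equivalent to $1-\kappa<\kappa\cdot\tfrac1n(\tfrac{1-p}{2})^n$, so $(1-\kappa)\,n<(\tfrac{1-p}{2})^n$; together with the bound $m(A)-a^{\ast}\le(1-\kappa)n$ from the first paragraph and the gap estimate, this forces $m(A)=a^{\ast}$, which is precisely \eqref{eq:lem:simplified-framework-1}. For the last assertion, if $Z_i$ is a $(p,\kappa)$-optimal first query, take $A$ that is $(p,\kappa)$-optimal with first query $Z_i$; then $m(A)=a^{\ast}$ by what was just shown, while $\Phi(A)$ is an induced algorithm whose first query is bit $i$ with $\E_x[\,|\Phi(A)^2_{T}|\,]\le m(A)=a^{\ast}$, hence $\E_x[\,|\Phi(A)^2_{T}|\,]=a^{\ast}$, so $\Phi(A)$ is an optimal algorithm in $\mathcal I$ with first query bit $i$. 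The main obstacle is the gap estimate, and within it the second case — controlling by how much a suboptimal induced algorithm (equivalently, a suboptimal standard algorithm carrying a useless $Z$-dependence) can exceed $a^{\ast}$; this is exactly where the shape of $\kappa_0(n,p)$, assembled from the smallest atom probability $(\tfrac{1-p}{2})^n$ and the crude bound $\mu(A)\le n$, comes from.
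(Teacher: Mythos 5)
Your proof is correct, and it rests on the same two quantitative pillars as the paper's argument -- Remark \ref{rem:kappa-1} and the product (smallest atom probability)\,$\times$\,(dyadic granularity) $=\left(\tfrac{1-p}{2}\right)^n$ -- but it is organized along a genuinely different route. The paper fixes an optimal induced $I$ and conditions on the noise vector $(Y_i)_{i\le n}$, $Y_i=\mathbf{1}_{Z_i=X_i}$: conditionally on $Y=y$ the trajectory of any $A\in\mathcal A_p$ is a function of the uniform vector $(Z_i)_{i\le n}$, so the conditional expected totals are multiples of $2^{-n}$, and a strict excess on a single atom $\hat y$ (of probability at least $(1-p)^n$) is then played off against the $(p,\kappa)$-cost comparison with $I$ to contradict optimality. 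You instead isolate a standalone gap estimate for $m(A')=\E_x[\abs{A_{T_{A'}}'^1}+\abs{A_{T_{A'}}'^2}]$, proved by passing to the induced companion $\Phi(A')$ (pointwise domination on the atoms of $\sigma((Z_i,X_i)_{i\le n})$) and then derandomizing $\Phi(A')$, and you finish with the clean bound $m(A)-a^\ast\le(1-\kappa)n<\left(\tfrac{1-p}{2}\right)^n$, where $a^\ast=\min_{I\in\mathcal I}\E_x[\abs{I^2_{T_I}}]$. What your route buys is a reusable statement (every $A'\in\mathcal A_p$ has $m(A')=a^\ast$ or $m(A')\ge a^\ast+((1-p)/2)^n$) and a two-line proof of the ``moreover'' part via $\Phi(A)$, which the paper leaves to the reader; what the paper's conditioning buys is that your two cases collapse into one step and no exchange argument is needed. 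Two small points about your Case 2, the only sketched part: the per-node saving should also carry the probability of the worse noise branch, i.e.\ it is at least $((1-p)/2)^s\cdot(1-p)\cdot 2^{-(n-s)}=(1-p)^{s+1}2^{-n}$, which is still $\ge((1-p)/2)^n$ since $s+1\le n$, so your conclusion is unaffected; and the whole exchange step can be bypassed by the paper's device applied to $\Phi(A')$: writing $\E_x[\abs{\Phi(A')^2_{T}}]=\sum_y\P_x[Y=y]\,\E_x[\abs{\Phi_y^2}]$, where $\Phi_y$ hard-codes the noise values $y$ and is a standard algorithm with dyadic expected cost $\ge a^\ast$, together with $\P_x[Y=y]\ge(1-p)^n$, yields the gap at once. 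Finally, whether $\Phi(A')$ literally satisfies the measurability clause (iii) of Definition \ref{defn:algo-as-partition} (which lets an algorithm use $Z_i$ only while $i\in A^1$) does not matter for your argument, since you only use the pointwise domination and the derandomization, not membership of $\Phi(A')$ in $\mathcal A_p$.
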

By Remark \ref{rem:kappa-1}, the inequality $\ge$ in \eqref{eq:lem:simplified-framework-1} holds for all $\kappa$. AND$_2$ provides an easy counterexample showing that equality can fail for small $\kappa$. 
The following example shows that $\kappa$ large is also a necessary assumption to ensure that an optimal first query among all algorithms in $\mathcal A_p$ is also an optimal first query among all algorithms in $\mathcal I$. 
\begin{example} \label{ex:address-7}
	Consider a variant of the ADDRESS function on $n=7$ bits, defined by 
	\begin{equation}
		f(x_1,\ldots,x_7) = \begin{cases} 	x_1 \oplus x_2 &\textrm{if}\ x_5 \oplus x_6 \oplus x_7 = 1, \\
			x_3 \oplus x_4 &\textrm{if}\ x_5 \oplus x_6 \oplus x_7 = 0. \\
		\end{cases}
	\end{equation}
	On the one hand, the algorithm $I$ which first queries $X_5$, $X_6$, $X_7$, and then either $X_1$, $X_2$ or $X_3$, $X_4$ depending on the value of $X_5 \oplus X_6 \oplus X_7$, has cost 5. In fact, every induced algorithm that is optimal among $\mathcal I$ starts with querying $X_5$, $X_6$ and $X_7$ in some order. On the other hand, consider the algorithm $A$ which first queries $Z_1$, $Z_2$, $Z_3$, $Z_4$. If $Z_1 \oplus Z_2 = Z_3 \oplus Z_4$, it then queries $X_1$, $X_2$, $X_3$, $X_4$ and if needed also $X_5$, $X_6$, $X_7$. Otherwise, it queries $X_5$, $X_6$, $X_7$ , and then either $X_1$, $X_2$ or $X_3$, $X_4$ depending on the value of $X_5 \oplus X_6 \oplus X_7$. Note that $A$ has expected cost $9/2 + \kappa + 6p(1-p)(p^2 + (1-p)^2)$, which is strictly smaller than $5$ if $\kappa < 1/2 - 6p(1-p)(p^2+(1-p)^2)$. Furthermore, for such $\kappa$, one can check that $Z_1$, $Z_2$, $Z_3$, $Z_4$ are the only optimal first queries among $\mathcal A_p$.
\end{example}

To prove the lemma, it will be convenient to define for $1 \le i \le n$,
\begin{equation}
	Y_i := \mathbf{1}_{Z_i = X_i}.
\end{equation}
Note that $Y_i = 0$ means the Brownian motion $x_i$ terminated at $0$ (respectively $1$) even though it hit $p \in (1/2,1)$ before $1-p$ (respectively $1-p$ before $p$). 

\begin{proof}[Proof of Lemma \ref{lem:simplified-framework-1}]
	Fix a Boolean function $f$ on $n \ge 1$ bits. By Remark \ref{rem:kappa-1}, 
	\begin{equation}\label{eq:7}
		\min_{B \in \mathcal A_p} \E_x \left[\abs{B_{T_B}^1} + \abs{B_{T_B}^2 } \right] = \min_{J \in \mathcal I} \E_x\left[ \abs{J_{T_J}^2}\right], 
	\end{equation}%
	and in the same way, we have for every $y_1,\ldots,y_n \in \{0,1\}$, 
	\begin{equation}\label{eq:6}
		\begin{split}
			\min_{B \in \mathcal A_p} \E_x \left[\abs{B_{T_B}^1} + \abs{B_{T_B}^2 } \mid Y_1 = y_1,\ldots,Y_n = y_n  \right] \\ = \min_{J \in \mathcal I} \E_x\left[ \abs{J_{T_J}^2}\mid Y_1 = y_1,\ldots,Y_n = y_n  \right]. \\ \end{split}
	\end{equation}%
	We now fix a $(p,\kappa)$-optimal algorithm $A \in \mathcal A_p$ and an induced algorithm $I \in \mathcal I$ achieving the minimum in the right-hand side of  \eqref{eq:7}. Since the cost of an induced algorithm is independent of $(Y_i)_{i=1}^n$, $I$ also achieves the minimum in  the right-hand side of \eqref{eq:6} for all $y_1,\ldots,y_n \in \{0,1\}$. Thus, for every $y_1,\ldots,y_n \in \{0,1\}$,	
	\begin{align}\label{eq:5}
		\E_x \left[\abs{A_{T_A}^1} + \abs{A_{T_A}^2 } \mid Y_1 = y_1,\ldots,Y_n = y_n  \right] &\ge \E_x\left[ \abs{I_{T_I}^2}\mid Y_1 = y_1,\ldots,Y_n = y_n\right] \\ &= \E_x\left[ \abs{I_{T_I}^2}\right].
	\end{align}
	Towards a contradiction, assume that $\kappa > \kappa_0(n,p)$ and that in \eqref{eq:lem:simplified-framework-1}, the left-hand side is strictly larger than the right-hand side. Then inequality \eqref{eq:5} must be strict for some $\hat{y}_1,\ldots,\hat{y}_n \in \{0,1\}$ and so we must have
	\begin{equation}\label{eq:0}
		\E_x\left[\abs{A^1_{T_A}} + \abs{A^2_{T_A}} \mid Y_1 = \hat{y}_1, \ldots, Y_n = \hat{y}_n\right] \ge 2^{-n} + \E_x\left[\abs{I^2_{T_I}} \mid Y_1 = \hat{y}_1, \ldots, Y_n = \hat{y}_n\right]
	\end{equation}
	since the conditional expectations are multiples of $2^{-n}$. Indeed, they only depend on the randomness of $Z_1,\ldots,Z_n$, which are independent of $Y_1,\ldots,Y_n$. Comparing the $(p,\kappa)$-costs of $A$ and $I$, we obtain
	\begin{align*}
		&c_{p,\kappa}\left(A,f\right) - c_{p,\kappa}\left(I,f\right) \\
		&= (1-\kappa) \cdot \underbrace{\left(\E_x\left[\abs{A^2_{T_A}}\right] -  \E_x\left[\abs{I^2_{T_I}}\right]\right)}_{\ge -n} + \kappa \cdot 	\underbrace{\left(\E_x\left[\abs{A^1_{T_A}} + \abs{A^2_{T_A}}\right] - \E_x\left[\abs{I^2_{T_I}}\right]\right)}_{\ge 2^{-n} (1-p)^{n}} \\
		&\ge - n + \kappa \cdot \left(n + 2^{-n} (1-p)^{n}\right) > 0,
	\end{align*}
	where we bounded the difference $\E_x[\abs{A^1_{T_A}} + \abs{A^2_{T_A}}] - \E_x[\abs{I^2_{T_I}}]$ from below using \eqref{eq:5} and \eqref{eq:0} and $\P_x\left[ Y_1 = \hat{y}_1, \ldots, Y_n = \hat{y}_n\right] \ge \min\{1-p,p\}^{n} = (1-p)^n$. This contradicts the $(p,\kappa)$-optimality of $A$. This concludes the proof of \eqref{eq:lem:simplified-framework-1}. The final statement is straightforward and left to the reader.
\end{proof} 

\begin{lemma}\label{lem:simplified-framework-2}
	Let $f$ be a Boolean function on $n \ge 1$ bits. For every algorithm $A \in \mathcal A_p$,
	\begin{equation} \label{eq:lem:simplified-framework-2}
		\E_x \left[\abs{A_{T_A(f)}^1} + \abs{A_{T_A(f)}^2 } \right] = \min_{I \in \mathcal I} \E_x\left[ \abs{I_{T_I(f)}^2}\right] \quad \implies \quad \E_x \left[\abs{A_{T_A(f)}^1}\right] = 0.
	\end{equation}%
\end{lemma}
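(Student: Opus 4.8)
Write $g(B):=\E_x\bigl[\abs{B^1_{T_B(f)}}+\abs{B^2_{T_B(f)}}\bigr]$ for the expected number of distinct bits a generalized algorithm $B\in\mathcal A_p$ has touched by the time it determines $f$. By Remark \ref{rem:kappa-1}, $\min_{I\in\mathcal I}\E_x[\abs{I^2_{T_I}}]=\min_{B\in\mathcal A_p}g(B)=:m^\ast$, so the assumption in \eqref{eq:lem:simplified-framework-2} says exactly that $A$ minimizes $g$ over $\mathcal A_p$. The plan is to argue by contradiction: assuming $\P_x[\abs{A^1_{T_A(f)}}\ge 1]>0$, I will exhibit $A'\in\mathcal A_p$ with $g(A')<g(A)$. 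Since each bit is asked at most two questions, the decision tree of $A$ has depth at most $2n$ and hence finitely many nodes, so on the assumed event there is a \emph{single} node $\nu$, reached with probability $\rho>0$, at which $A$ asks the $(1-p)/p$ question about some bit $i$ and such that, conditionally on reaching $\nu$, with positive probability the continuation determines $f$ without ever asking the $0/1$ question about $i$.

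Let $C_0,C_1$ be the continuations of $A$ after observing $Z_i=0$, $Z_i=1$, and for $x\in\{0,1\}$ let $u_z(x)$ denote the conditional expected number of bits queried by $C_z$ for the \emph{first} time, given that $A$ reaches $\nu$ and that $X_i=x$ (re-queries of $i$, already touched at $\nu$, cost nothing). One checks that $\{\text{reach }\nu\}$ is independent of $(Z_i,X_i)$, that neither $C_0$ nor $C_1$ branches on $Z_i$ itself, and that conditioning on $\{Z_i=z\}$ or on $\{X_i=x\}$ leaves the remaining bits with law $\pi_{1/2}$; in particular the conditional cost of $C_z$ given $Z_i=z$ equals $p\,u_z(z)+(1-p)\,u_z(1-z)$. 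Now let $A_{\mathrm{swap}}$ follow $A$ up to $\nu$, then ask the $0/1$ question about $i$ (costing one touch), and then run whichever of $C_0,C_1$ has the smaller conditional cost given the observed value of $X_i$. Comparing the subtrees rooted at $\nu$, and using the trivial bound $q\,v(0)+(1-q)\,v(1)\ge\min\{v(0),v(1)\}$ for $q\in(0,1)$, strict unless $v(0)=v(1)$, gives
\begin{equation}
g(A_{\mathrm{swap}})-g(A)=\tfrac{\rho}{2}\bigl(\min\{u_0(0),u_1(0)\}-p\,u_0(0)-(1-p)\,u_1(0)\bigr)+\tfrac{\rho}{2}\bigl(\min\{u_0(1),u_1(1)\}-(1-p)\,u_0(1)-p\,u_1(1)\bigr)\le 0,
\end{equation}
and this is strictly negative unless $u_0\equiv u_1$. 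In that first case $A_{\mathrm{swap}}$ already gives the desired contradiction.

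It remains to handle $u_0\equiv u_1$. Choose $z_0\in\{0,1\}$ such that $C_{z_0}$, with positive probability, determines $f$ without ever asking the $0/1$ question about $i$ (possible since the query at $\nu$ is dangling with positive probability). Replacing $C_{1-z_0}$ by $C_{z_0}$ leaves $g$ unchanged — this is the analogue of the display above, with both correction terms vanishing because $u_0\equiv u_1$. After this replacement the $(1-p)/p$ question at $\nu$ routes to the same continuation $C_{z_0}$ regardless of its outcome, hence is useless and can be deleted: running $C_{z_0}$ directly from $\nu$ with $i$ untouched decreases $g$ by exactly $\rho\bigl(1-\P_x[\,C_{z_0}\text{ asks the }0/1\text{ question about }i\mid\text{reach }\nu\,]\bigr)>0$, since $C_{z_0}$ touches $i$ at most once and fails to touch it with positive probability. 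Either way $g(A')<g(A)$, contradicting $g(A)=m^\ast$; hence $\P_x[\abs{A^1_{T_A(f)}}\ge 1]=0$, i.e.\ $\E_x[\abs{A^1_{T_A(f)}}]=0$.

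The step I expect to be the main obstacle is not the convexity computation but the bookkeeping behind the two cost identities: one must verify carefully that touching $i$ first via a $(1-p)/p$ question and later via a $0/1$ question always counts as a single distinct bit (in $A$, in $A_{\mathrm{swap}}$, and after the deletion), that $C_0$ and $C_1$ run unchanged when the order or presence of the queries about $i$ is modified, and that the claimed conditional laws are correct, so that the relevant conditional expectations are exactly the $u_z(x)$ above. Granting this, the moral is simply that for the ``total touches'' objective a partial query of a bit is never worth its cost — paying the same amount to learn the bit exactly and then re-optimizing is always at least as good, and strictly better precisely when the partial query would otherwise have been left dangling.
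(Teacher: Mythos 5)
Your proposal is correct in substance, but it takes a genuinely different route from the paper's proof. You work at a fixed $p$ and argue by local surgery on the decision tree: at a node where a $(1-p)/p$ query is left dangling with positive probability, either upgrading it to a $0/1$ query and re-selecting the cheaper continuation strictly lowers the expected number of touched bits (strict convexity of the minimum versus a weighted average with weights $p,1-p\in(0,1)$), or, in the degenerate case $u_0\equiv u_1$, the partial query is decision-irrelevant and deleting it saves exactly the dangling probability; either way you contradict the minimality of $\E_x[\abs{A^1_{T_A}}+\abs{A^2_{T_A}}]$ guaranteed by Remark \ref{rem:kappa-1}. The paper instead conditions on the agreement variables $Y_i=\mathbf{1}_{Z_i=X_i}$, observes that the conditional costs given $Y$ do not depend on $p$, passes to $p=1/2$ where the $Z_i$'s carry no information so that $A$ becomes a randomized standard algorithm, invokes optimality of the best induced algorithm to force $\E_x[\abs{A^1_{T_A}}]=0$ there, and transfers back to general $p$ by the same $p$-independence. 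Your approach buys a self-contained, quantitative argument at fixed $p$ (it exhibits a strictly better algorithm whenever a dangling query occurs), at the price of the bookkeeping you flag, which does go through but deserves two explicit remarks: the comparison display should be an inequality ``$\le$'' rather than an equality, since giving the continuation full knowledge of $X_i$ can only make determination earlier, and the final strict saving needs the observation that the event that $C_{z_0}$ determines $f$ without ever querying $X_i$ is measurable with respect to the other bits, so its (positive) probability is unchanged by the modification. The paper's approach buys freedom from any tree surgery and reuses the $Y$-conditioning/$p$-independence device that also drives Lemma \ref{lem:simplified-framework-1} and Theorem \ref{thm:composition-simplified-model}.
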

\begin{proof}
	Recall that for $p \in (1/2,1)$, we have defined the random variables 
	\begin{equation}
		Z_i = \mathbf{1}_{x_i(T_{\{1-p,p\}})=p}, \quad X_i = x_i(T_{\{0,1\}}), \quad \text{and} \quad Y_i = \mathbf{1}_{Z_i = X_i}
	\end{equation}%
	for $1 \le i \le n$. We naturally extend these definitions to $p=1/2$ by choosing $(Z_i)_{1 \le i \le n}$ to be Ber($1/2$)-distributed random variables that are independent of each other and of the Brownian motions $(x_i)_{1 \le i \le n}$ in this case. For every $p \in [1/2,1)$, we note that $Y_i \sim \textrm{Ber}(p)$, $Y_i$ is independent of $Z_i$, and $X_i = Z_i \cdot Y_i + (1-Z_i)\cdot (1-Y_i)$. 
	
	Fix a Boolean function $f$ on $n \ge 1$ bits, an algorithm $A \in \mathcal A_p$ and an algorithm $I \in \mathcal I$ that is optimal among all induced algorithms.
	As argued at the beginning of the proof of Lemma \ref{lem:simplified-framework-1} (see \eqref{eq:6}), we have 	
	\begin{equation}\label{eq:2} 
		\E_x \left[\abs{A_{T_A}^1}+\abs{A_{T_A}^2 } \mid Y_1 = y_1,\ldots,Y_n = y_n  \right]  \ge \E_x\left[ \abs{I_{T_I}^2}\mid Y_1 = y_1,\ldots,Y_n = y_n\right]
	\end{equation}
	for every $y_1,\ldots,y_n \in \{0,1\}$.	From now on, we assume $\E_x [\abs{A_{T_A}^1} + \abs{A_{T_A}^2 } ] = \min_{I \in \mathcal I} \E_x[ \abs{I_{T_I}^2}]$. Equation \eqref{eq:2} then implies 
	\begin{equation}\label{eq:8} 
		\E_x \left[\abs{A_{T_A}^1}+\abs{A_{T_A}^2 } \mid Y_1 = y_1,\ldots,Y_n = y_n  \right]  = \E_x\left[ \abs{I_{T_I}^2}\mid Y_1 = y_1,\ldots,Y_n = y_n\right]
	\end{equation}
	for every $y_1,\ldots,y_n \in \{0,1\}$.
	Viewed as a sequence of random partitions (see Definition \ref{defn:algo-as-partition}), it makes sense to also consider the algorithm $A$ for other values of $p \in [1/2,1)$.
	Neither of the two sides in \eqref{eq:8} depends on $p$ and we thus deduce that for \emph{every} $p\in [1/2,1)$, we have 
	\begin{equation}
		\E_x \left[\abs{A_{T_A}^1}+\abs{A_{T_A}^2 }  \right]  = \E_x\left[ \abs{I_{T_I}^2}\right].
	\end{equation}
	However, for $p=1/2$, we actually have no information about the values  of the bits in $A^1$, meaning that given the $Z_i$'s with $i \in A^1$, the $X_i$'s with $i \in A^1$ are still independent and  Ber(1/2)-distributed. Hence, the algorithm $A$ can be viewed as a randomized algorithm where the external randomness is coming from the $Z_i$ bits. It now follows from the optimality of $I$ among all induced algorithms that we must have 
	\begin{equation}
		\E_x \left[\abs{A_{T_A}^2 }\right]  \ge \E_x\left[ \abs{I_{T_I}^2}\right], \quad \text{and thus,} \quad 	\E_x \left[\abs{A_{T_A}^1 }\right] = 0.
	\end{equation}
	We deduce that for every $y_1,\ldots,y_n \in \{0,1\}$ and $p=1/2$,
	\begin{equation}
		\E_x \left[\abs{A_{T_A}^1 } \mid Y_1 = y_1,\ldots,Y_n = y_n\right] = 0.
	\end{equation}
	This implies  $\E_x [\abs{A_{T_A}^1 }] = 0$ for every $p \in [1/2,1)$, thereby concluding the proof.
\end{proof}

We are now ready to prove the theorem.

\begin{proof}[Proof of Theorem \ref{thm:composition-simplified-model}]
	We need to show that for any Boolean function $f$ on $n \ge 1$ bits and $\kappa > \kappa_0(n,p)$, we have  
	\begin{equation} \label{eq:9}
		a_{p,\kappa}(f,\pi_{1/2}) =  a(f,\pi_{1/2}).
	\end{equation}%
	Now, fix such an $f$ and $\kappa$. Combining Lemmas \ref{lem:simplified-framework-1} and \ref{lem:simplified-framework-2}, we deduce that every $(p,\kappa)$-optimal algorithm $A \in \mathcal A_p$ for $f$ satisfies 
	\begin{equation} 
		\E_x \left[\abs{A_{T_A(f)}^1}\right] = 0.
	\end{equation}%
	Hence, for every $(p,\kappa)$-optimal algorithm $A \in \mathcal A_p$ for $f$, we have
	\begin{equation} 
		c_{p,\kappa}(A,f) = \E_x \left[\abs{A_{T_A(f)}^2}\right] = c_{p,1}(A,f),
	\end{equation}%
	and so
	\begin{equation}
		a_{p,\kappa}(f,\pi_{1/2}) = a_{p,1}(f,\pi_{1/2}) = a(f,\pi_{1/2}),
	\end{equation}%
	where the last equality is due to  Remark \ref{rem:kappa-1}.
	This concludes the proof.
\end{proof}

\begin{remark}
	Equality \eqref{eq:9} implies that for $\kappa>\kappa_0(n,p)$, any optimal first query among $\mathcal I$ is also an optimal first query among $\mathcal A_p$. However,  Example \ref{ex:address-7} shows that this might not be the case for $\kappa$ small.
\end{remark}

Recall that we have seen that $\kappa_c(\textrm{AND}_2,p) = 2p -1$. Moreover, one can check that for any permutation invariant Boolean function $f$ on $n=3$ bits, $\kappa_c(f,p) \in \{0,2p -1\}$. This naturally leads to the following questions. 
\begin{question}\label{question:kappa-critical-symmetric} Let $\mathcal S$ denote the set of permutation invariant Boolean functions. Do we have 
	\begin{equation}
		\sup_{f \in \mathcal{S}}\ \kappa_c(f,p) = 2p -1 \ ?
	\end{equation}
\end{question}
\begin{question}\label{question:kappa-critical} Do we have 
	\begin{equation}
		\sup_{f}\ \kappa_c(f,p) = 2p-1  \quad \textrm{or} \quad \sup_{f}\ \kappa_c(f,p) < \infty   \ ?
	\end{equation}
\end{question}

\section{Distributional algorithmic complexity under composition and an outlook on asymptotic separation}\label{sec:alg-comp-composition}
	
	While this section does not contain theorems, it provides a natural conjecture together with a connection to asymptotic separation of distributional algorithmic and subcube partition complexity. It also brings us back to the discussion at the end of Sections \ref{sec:asymp-separation}.
	
	Recall that for two Boolean functions $f$, $g$ on $n$ resp., $m$ bits, the \emph{composition} $f \circ g$ is the Boolean function on $n\cdot m$ bits  defined by
	\begin{equation}
		f\circ g (x^1_{1},\ldots,x^1_{m},\ldots,x^n_{1},\ldots,x^n_{m}) = f\left(g(x^1_{1},\ldots,x^1_{m}),\ldots,g(x^n_{1},\ldots,x^n_{m})\right).
	\end{equation} 
	Moreover, recall that the PARITY function on $n$ bits is defined by 
	\begin{equation}
		\textrm{PAR}_n (x_1,\ldots,x_n) = \begin{cases} 	1 &\textrm{if} \sum_{i=1}^n x_i \ \textrm{is odd}, \\
			0 &\textrm{if} \sum_{i=1}^n x_i \ \textrm{is even}.
		\end{cases}
	\end{equation}
	We believe that the ideas of the proof of Theorem \ref{thm:composition-simplified-model} could lead to the following result which we therefore state as a conjecture.
	\begin{conjecture}\label{thm:composition}
		For every $n,m \ge 1$, there exists $k_0(n,m) \in \mathbb N$, such that for any Boolean functions $f$ and $g$ on $n \ge 1$ respectively $m \ge 1$  bits and for any $k \ge k_0(n,m)$, we have 
		$$a\left(f\circ (g \oplus \textrm{PAR}_k),\pi_{1/2}\right) = a(f,\pi_{1/2}) \cdot a(g \oplus \textrm{PAR}_k,\pi_{1/2}). $$
	\end{conjecture}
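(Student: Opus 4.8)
The plan is to prove the two inequalities separately. The upper bound
$a\!\left(f\circ (g\oplus\textrm{PAR}_k),\pi_{1/2}\right) \le a(f,\pi_{1/2})\cdot a(g\oplus\textrm{PAR}_k,\pi_{1/2})$ is immediate from part (iv) of Proposition \ref{prop:composition-distributional}: writing $h := g\oplus\textrm{PAR}_k$, the parity of $k\ge 1$ independent $\textrm{Ber}(1/2)$ bits is $\textrm{Ber}(1/2)$, so $h$ is balanced, $h(1/2)=1/2$, and $a(f,\pi_{h(1/2)})=a(f,\pi_{1/2})$. It is also useful to record the auxiliary identity $a(h,\pi_{1/2}) = k + a(g,\pi_{1/2})$: every parity bit is pivotal for $h$ at every input, so any algorithm for $h$ must query all $k$ of them, and since they are independent of the first $m$ bits, the remaining task of determining $g$ still costs at least $a(g,\pi_{1/2})$ among those bits; the matching algorithm queries the $k$ parity bits and then runs an optimal algorithm for $g$.

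The heart of the matter is the lower bound $a(f\circ h,\pi_{1/2}) \ge a(f,\pi_{1/2})\cdot a(h,\pi_{1/2})$ for $k$ large, and the plan is to deduce it from the following structural statement, modeled on Theorem \ref{thm:composition-simplified-model}: for $k \ge k_0(n,m)$, every optimal algorithm $A$ for $f\circ h$ is \emph{block-iterative}, meaning that at the random time $A$ determines $f\circ h$ each of the $n$ blocks (each an independent copy of $h$ on $m+k$ bits) is either \emph{completed} --- all $k$ of its parity bits and a witness for $g$ among its first $m$ bits have been queried, so $h_i$ is determined --- or \emph{untouched}, no bit of it having been queried. Granting this, the bound follows by the same decoupling as in the proof of Proposition \ref{prop:composition-distributional}(iv): the sequence in which $A$ completes blocks is a (randomized) decision tree for $f$ run on $(h_1,\dots,h_n)\sim\pi_{1/2}^{n}$ --- the decision to complete a given block is made before any of its bits are seen, since untouched blocks carry no information and leave $h_i$ unconstrained --- so the expected number of completed blocks is at least $a(f,\pi_{1/2})$; and within each completed block $A$ runs a genuine algorithm for $h$ on a fresh $\pi_{1/2}^{m+k}$-input, whose outcome is independent of whether the block is completed, contributing expected cost at least $a(h,\pi_{1/2})$. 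Multiplying gives $a(f\circ h,\pi_{1/2}) \ge a(f,\pi_{1/2})\cdot a(h,\pi_{1/2})$, matching the upper bound.

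To prove the structural statement I would follow the two-step pattern of Lemmas \ref{lem:simplified-framework-1} and \ref{lem:simplified-framework-2}. First, any query spent on a block that is never completed is worthless for determining $h_i$: the $k$ parity bits reveal nothing about $h_i$ until all are seen, and the first $m$ bits reveal nothing about $h_i$ while any parity bit is unseen. Conditioning on the analogue of the decoupling variables $Y_i$ of Section \ref{sec:alg-comp-partial-info}, one argues that replacing a speculative \emph{partial opening} of a block --- querying its $k$ parity bits without completing it --- by simply not touching the block can cost the $f$-strategy at most a bounded amount, independent of $k$ and depending only on $n$ and $m$ (just as the saving from partial queries in the $(p,\kappa)$-model is bounded by a function of $n$), whereas the partial opening already costs at least $k$; hence for $k\ge k_0(n,m)$ no optimal algorithm partially opens a block, after which the remaining, now provably useless, queries on untouched blocks are removed. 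Second, specializing $A$ to the fully noisy regime where the parity bits carry literally no information turns it into a randomized algorithm for $f$ whose within-block costs can only worsen, which forces optimal algorithms to make no useless queries at all, i.e.\ to be block-iterative.

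The main obstacle is the first step. In the $(p,\kappa)$-model each coordinate admits only a single level of partial information, whereas here a partially opened block can be probed further by querying successively more of its first $m$ bits, producing a whole hierarchy of partial states governed by the combinatorics of $g$; controlling these and showing that the \emph{total} benefit of all such speculative partial work is still bounded by a function of $n$ and $m$ --- so that a single threshold $k_0(n,m)$ (expected to be, roughly, exponential in $n$ with a polynomial dependence on $m$, in the spirit of $\kappa_0(n,p)$) works uniformly over all $f$ and $g$ --- is the delicate point. The decoupling step and the identity $a(h,\pi_{1/2})=k+a(g,\pi_{1/2})$ are routine; it is the ``large $k$ forbids hedging'' phenomenon, already visible in Example \ref{ex:address-7} for small $k$, that must be made quantitative.
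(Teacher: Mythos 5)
This statement is stated in the paper as a \emph{conjecture}: the authors offer no proof, only the remark that the ideas behind Theorem \ref{thm:composition-simplified-model} ``could lead'' to it. So there is no paper proof to compare against, and the relevant question is whether your argument actually closes the gap. It does not. Your upper bound via Proposition \ref{prop:composition-distributional}(iv) and the identity $a(g\oplus\textrm{PAR}_k,\pi_{1/2})=k+a(g,\pi_{1/2})$ are fine, and your decoupling of a genuinely block-iterative algorithm into an outer randomized decision tree for $f$ and inner runs of an algorithm for $h$ is the standard Wald-type argument one would expect. But the entire content of the conjecture sits in the structural claim that for $k\ge k_0(n,m)$ every optimal algorithm is block-iterative, and you yourself concede that the quantitative step needed there (``the total benefit of all speculative partial work is bounded by a function of $n$ and $m$'') is unproven. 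That step is not a technical refinement of Lemmas \ref{lem:simplified-framework-1}--\ref{lem:simplified-framework-2}; it is precisely the open problem, so the proposal is a program, not a proof.

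Two concrete soft spots deserve mention. First, your definition of block-iterative only constrains the \emph{terminal} state (each block completed or untouched), but your decoupling paragraph uses the stronger property that the algorithm never reads a bit of a block before committing to complete it; the terminal dichotomy does not imply this, and intermediate states in which all $k$ parity bits of a block plus some of its $g$-bits have been read do bias the conditional law of that block's $h_i$, so the induced block-order is not automatically a decision tree on $(h_1,\dots,h_n)$. Second, your claim that ``the partial opening already costs at least $k$'' only covers hedging via parity bits. An algorithm can also probe a few $g$-bits of a block at cost $O(1)$; this reveals nothing about $h_i$ (any query set missing a parity bit leaves $h_i$ uniform), but it does reveal the \emph{future completion cost} of that block, and ruling out an adaptive advantage from comparing such costs across blocks before choosing which one to complete is a genuinely different issue from the ``initial cost $k$'' phenomenon --- it cannot be killed by taking $k$ large, since these probes never touch the parity bits. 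Any successful proof must handle this cheap form of hedging separately (presumably by an exchangeability/neutrality argument showing it cannot strictly help), and this is not addressed in your outline beyond being folded into ``the delicate point.''
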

	
	The above conjecture would then imply that for every $n \ge 1$ and for every $L \ge 1$, there exists $k_1(n,L) \in \mathbb N$ such that for every Boolean function $f$ on $n \ge 1$ bits and for every $k \ge k_1(n,L)$, we have 
		\begin{equation}
			a\left((f\oplus \textrm{PAR}_k)^\ell,\pi_{1/2}\right) = \left(a(f\oplus \textrm{PAR}_k,\pi_{1/2})\right)^\ell, \quad \forall 1 \le \ell \le L.
		\end{equation}
	This naturally leads to the next question.
	\begin{question}
		For which Boolean functions $f$ does there exist $k_1(f) \in \mathbb N$ such that for every $k \ge k_1(f)$, we have 
		\begin{equation}
			a\left((f\oplus \textrm{PAR}_k)^\ell,\pi_{1/2}\right) = \left(a(f\oplus \textrm{PAR}_k,\pi_{1/2})\right)^\ell, \quad \forall 1 \le \ell <\infty ?
		\end{equation}
		In particular, does this hold for $f = \textrm{A-EQ}_3$ (see Example \ref{ex:constant-function})?
	\end{question}
	
	While the parity function trivially provides such an example, one approach in achieving asymptotic separation of algorithmic and subcube partition complexity is to find an example of a Boolean function $f$ for which  $sc(f,\pi_p) < a(f,\pi_p)$ and which satisfies the above. 
	As explained at the end of Section \ref{sec:asymp-separation}, our hope would be that the second question has a positive answer, implying that the sequence 
	\begin{equation}
		\left((\textrm{A-EQ}_3 \oplus \textrm{PAR}_k)^\ell\right)_{\ell \ge 1}
	\end{equation}
	asymptotically separates distributional algorithmic and subcube parition complexity for some sufficiently large $k\ge 1$.

\subsection*{Acknowledgments}

In 2004, Oded Schramm and the second author began to look at some of the questions addressed in this paper and obtained some preliminary results and examples. The project was put on ice in 2005. When the second author visited Zurich in 2019, it turned out that the first author was looking at similar questions during a semester project in 2018 supervised by Vincent Tassion and also had some preliminary results. It then became natural to combine forces. We hereby acknowledge Oded for his contributions to this paper.

The first author is grateful to Vincent Tassion for inspiring questions and discussions, and for supporting his visits to Gothenburg. Both authors would like to thank Vincent Tassion and Aran Raoufi for stimulating discussions during the second author's visits to Zurich, as well as the Forschungsinstitut für Mathematik (FIM) at ETH Zurich for its hospitality during these visits.

The first author is part of NCCR SwissMAP and has received funding from the European Research Council (ERC) under the European Union’s Horizon 2020 research and innovation program (grant 851565). The second author acknowledges the support of the Swedish Research Council (grant 2020-03763).

\small
\bibliographystyle{alpha}
\bibliography{refs}

\end{document}